\newtheorem{theorem}{Theorem}[subsection]
\newtheorem{lemma}[theorem]{Lemma}
\newtheorem{corollary}[theorem]{Corollary}
\newtheorem{proposition}[theorem]{Proposition}
\theoremstyle{definition}
\newtheorem{assumption}[theorem]{Assumption}
\newtheorem{remark}[theorem]{Remark}
\newtheorem*{acknowledgment}{Acknowledgment}
\numberwithin{equation}{section}
\theoremstyle{plain}
\numberwithin{equation}{section} 
\numberwithin{figure}{section} 
\theoremstyle{plain}
\theoremstyle{plain}
\theoremstyle{remark}
\newtheorem*{acknowledgement*}{Acknowledgement}
\theoremstyle{example}
\newcommand{\cA}{{\mathcal A}}
\newcommand{\cC}{{\mathcal C}}
\newcommand{\cF}{{\mathcal F}}
\newcommand{\cG}{{\mathcal G}}
\newcommand{\cH}{{\mathcal H}}
\newcommand{\cL}{{\mathcal L}}
\newcommand{\cM}{{\mathcal M}}
\newcommand{\cP}{{\mathcal P}}
\newcommand{\cQ}{{\mathcal Q}}
\newcommand{\cR}{{\mathcal R}}
\newcommand{\te}{{\theta}}
\newcommand{\Om}{{\Omega}}
\newcommand{\om}{{\omega}}
\newcommand{\ve}{{\varepsilon}}
\newcommand{\del}{{\delta}}
\newcommand{\Del}{{\Delta}}
\newcommand{\gam}{{\gamma}}
\newcommand{\Gam}{{\Gamma}}
\newcommand{\Sig}{{\Sigma}}
\newcommand{\sig}{{\sigma}}
\newcommand{\al}{{\alpha}}
\newcommand{\be}{{\beta}}
\newcommand{\ka}{{\kappa}}
\newcommand{\la}{{\lambda}}
\newcommand{\Lam}{{\Lambda}}
\newcommand{\bbC}{{\mathbb C}}
\newcommand{\bbN}{{\mathbb N}}
\newcommand{\bbP}{{\mathbb P}}
\newcommand{\bbR}{{\mathbb R}}
\newcommand{\bbZ}{{\mathbb Z}}
\newcommand{\bbI}{{\mathbb I}}
\begin{document}
\title[]{Limit theorems for random non-uniformly expanding or hyperbolic maps with exponential tails}  
 \vskip 0.1cm
 \author{Yeor Hafouta \\
\vskip 0.1cm
Department  of Mathematics\\
The Ohio State University}
\email{yeor.hafouta@mail.huji.ac.il, hafuta.1@osu.edu}%

\thanks{ }
\dedicatory{  }
 \date{\today}

\maketitle
\markboth{}{Limit theorems for random towers} 
\renewcommand{\theequation}{\arabic{section}.\arabic{equation}}
\pagenumbering{arabic}

\begin{abstract}\noindent
We prove a Berry-Esseen theorem, a local central limit theorem and (local) large and (global) moderate deviations principles for i.i.d. (uniformly) random non-uniformly expanding or hyperbolic maps with exponential first return times. 
 Using existing results  the problem is reduced to certain random  (Young) tower extensions,  which is the main focus of this paper. On the random towers we will obtain our results using contraction properties of random complex equivariant cones with respect to the complex Hilbert projective metric.
\end{abstract}

\section{Introduction}\label{sec1}
Limit theorems for deterministic  expanding or hyperbolic dynamical systems is a well studied topic. Such results are often proven using  spectral properties of an underlying family of complex transfer operators, what these days is often referred to as the Nagaev-Guivar\'ch method (see \cite{GH,HH}). Since then there were several extensions to certain classes of non-uniformly expanding or hyperbolic deterministic dynamical systems (see \cite{GO,MN} and  references therein), where the most general approach is based on tower extensions in the sense of Young \cite{Y1,Y2}.

A random dynamical system is generated  by a  probability (or measure) preserving system system $(\Om,\cF,\bbP,\sig)$, and
a family of maps $f_\om, \om\in\Om$.  The  random orbit of a point $x$ is generated by compositions $f_\om^n x=f_{\sig^{n-1}\om}\circ\cdots f_{\sig\om}\circ f_\om x$ of these maps along trajectories of the ``driving" system $(\Om,\cF,\bbP,\sig)$.
One of the first authors to study limit theorems for random dynamical systems is Kifer \cite{K96,K98} which, in particular, proved large deviations principles and central limit theorems for several classes of random uniformly expanding maps.
Recently (see \cite{Aimino, BB, DZ, DFGTV1, DFGTV2, DFGTV3, DH, HK, HeSt} and references therein) there has  been a growing interest in additional limit theorems for random expanding or hyperbolic dynamical systems. We also refer to \cite{Arno, Conze, HafSDS, Hyd, NSV} for central limit theorems for some classes of time dependent (sequential) dynamical systems which are not necessarily random.
In particular, in \cite{DFGTV2, HK} a local central limit theorem (LCLT) was proven for the first time in the context of random (expanding) dynamical systems, while in \cite{HK} a Berry-Esseen theorem was also proven for the first time in the random expanding case. In \cite{DFGTV3} the authors proved an LCLT for some classes of random Anosov maps, while in \cite{DH1}, together with the first author of \cite{DFGTV2}  we extended the Berry-Esseen theorem for such maps.
Both approaches were based on certain (different) types of spectral method for complex random operators.


Limit theorems for random   non-uniformly expanding or hyperbolic maps are still not fully studied. In \cite{BBM} the authors presented the notion of a random Young tower, showed that certain classes of random i.i.d. unimodel maps admit a random tower extension and obtained almost sure rates of mixing (decay of correlations). Results in this direction were also obtained later by several authors \cite{ABR, BB, BBD, BBR, Du}.  In \cite{Su} the author proved an almost sure invarinace principle (ASIP) for random Young towers. While the ASIP is a powerful statistical tool which is much stronger than the usual CLT, it does not imply the fine limit theorems studied in this paper. 

In this manuscript we will prove a Berry-Esseen theorem, a local central limit theorems and large and moderate deviations principles for maps which admit a random (uniform) tower extension, with exponential tails. Our results will be applicable then to i.i.d. uniformly random non-uniformly expanding or hyperbolic maps with exponential first return times. In the partially expanding case the limit theorems hold true when the initial measure is $\mu_\om$ is equivalent to the Lebesgue measure and $(f_\om)_*\mu_\om=\mu_{\sig\om}$ (i.e. $\mu_\om$ is an equivariant family\footnote{in the terminology of \cite{BBD}\, $\mu_\om$ are ``sample stationary measures".}), while in the partially hyperbolic case $\mu_\om$ is an equivariant family of physical measures. 
For the best of our knowledge the are no other results in this direction even for specific cases with exponential tails. Our approach here is spectral; generalizing the ideas in \cite{M-D}, we construct random real Birkhoff cones and show that the (appropriately floor-wise normalized) random transfer operators on the random tower are projective contractions of these cones (with respect to the corresponding Hilbert metrics). Then we apply the complex conic-perturbations theory of Rugh \cite{Rug} (see also \cite{Dub1,Dub2}) and show that appropriate complex perturbation of the above random transfer operators strongly contract the canonical complexification of these cones. Applying a general result from \cite{HK} which extends Rugh's complex spectral gap theory to compositions of random complex operators, will result in a random complex Ruelle-Perron-Frobenius (RPF) theorem. Once this theorem is established the limit theorems are derived using ideas from \cite[Ch. 7]{HK} (the relevant arguments share some similarities with the arguments in \cite{CP} for deterministic subshifts of finite type).

The paper is organized as follows. In Section \ref{Sec1} we will present the main results (limit theorems) for random Young towers, while in Section \ref{SecApp} we will present  our main applications to random partially expanding or hyperbolic maps. In Section \ref{Sec4} we will prove a few results concerning random transfer operators, partitions and cones on random towers. We will prove there a random Lasota-Yorke type inequality for random complex transfer operators generated by the Jacobian of the tower map, and construct certain types of random partitions. Using these partitions,  we define random real Birkhoff cones, show that the  complex transfer operators mentioned above are strong contractions of the canonical complexification of these cones, and derive the RPF theorem. Section \ref{SecLimThms} is devoted to application of this RPF theorem to limit theorems. 

\section{Preliminaries and main results}\label{Sec1}

\subsection{Random Young towers}\label{Pre}
Let $\cP_0=(\Om_0,\cF_0,P_0)$ be a probability space and let $\cP=\cP^\bbZ=(\Om,\cF,\bbP)$ be the appropriate product space. Let $\sig:\Om\to\Om$ be the left shift given by $\sig\om=(\om_{n+1})_{n\in\bbZ}$, where $\om=(\om_n)_{\in\bbZ}$.
 Let $(M,\cM)$ be a measurable space. Our setup consist of a family of measurable sub-spaces $M_\om\subset M$ and maps $f_\om:M_\om\to M_{\sig\om}$, where $f_\om=f_{\om_0}$ depends only on the $0$-th coordinate of $\om=(\om_k)_{k\in\bbZ}$ (so the random maps $f_{\sig^n\om},\,n\geq0$ are independent).
 Moreover, there are measurable subsets $\Del_{\om,0}$ of $M_\om$ and countable measurable partition $\{\Lambda_{\om,i}\}$ of $\Del_{\om,0}$ so that for any $\om$ and $i$ there is a minimal positive integer $R_{\om,i}$ such that
\[
f_\om^{R_{\om,i}}\Del_{\om,i}\subset\Del_{\sig^{R_{\om,i}}\om,0}
\] 
where for each $n$ we define 
$f_\om^n=f_{\sig^{n-1}\om}\circ\cdots\circ f_{\sig\om}\circ f_\om$. Furthermore, $f_\om^{R_{\om,i}}|\Lambda_{\om,i}\to\Del_{\sig^{R_{\om,i}},0}$ is a measurable bijection for each $i$.
Our measurability assumption are as follows. We assume that the map $\om\to R_{\om,i}$ is measurable for each $i$, that  the sets $M_\om$ and $\Lambda_{\om,i},\,i\in\bbN$ are measureable in $\om$ in the sense of \cite[Section 3]{Cr}, and that the map $(\om,x)\to f_\om(x)$ is measureble in both $\om$ and $x$ with respect to the $\sig$-algebra on the skew-product space $\{(\om,x):\,\om\in\Om, \,x\in M_\om\}$ induced from the product $\sig$-algebra $\cF\times \cM$.
 
Next,  for any fixed $\om$ we view $\{R_{\om,i}\}$ as a function $R_\om:\Del_{\om,0}\to\bbN$ by defining $R_\om|_{\Lam_{\om,i}}\equiv R_{\om,i}$.
We define now a random tower $\Del_\om=\cup_{\ell\geq0}\Del_{\om,\ell}$ as follows: for any $\ell\geq1$ we set 
\[
\Del_{\om,\ell}=\{(x,\ell):\,x\in\Del_{\sig^{-\ell}\om,0}, R_{\sig^{-\ell}\om}(x)\geq\ell+1\}.
\]
We will also identify between $\Del_{\om,0}$ and $\Del_{\om,0}\times\{0\}$.
The above partitions induce a partition $\cQ_\om=\{\Del_{\om,\ell,i}: (\ell,i)\in\cG_\om\}$ of $\Del_{\om}$, where 
$\Del_{\om,\ell,i}=\Lambda_{\sig^{-\ell}\om,i}\times\{\ell\}$ and $\cG_\om$ is the set of pairs $(\ell,i)$ so that  $R_{\sig^{-\ell}\om,i}>\ell$.

We define a map $F_\om:\Del_\om\to \Del_{\sig\om}$ by 
\[
F_\om(x,\ell)=\begin{cases}(x,\ell+1) & \text{ if }R_{\sig^{-\ell}\om}(x)>\ell+1 \\(f^{\ell+1}_{\sig^{-\ell}\om}x,0)& \text{ if }R_{\sig^{-\ell}\om}(x)=\ell+1\end{cases}.
\]
For any $n\geq1$, the $n$-th order ``cylinder" partition of $\Del_\om$ is given by 
\[
C_{\om,n}=\bigvee_{i=0}^{n-1}\left(F_\om^{i}\right)^{-1}\cQ_{\sig^i\om}
\]
where 
\[
F_\om^i=F_{\sig^{i-1}\om}\circ\cdots\circ F_{\sig\om}\circ F_\om.
\]
Given a point $x\in\Del_\om$ we denote by $C_{\om,n}(x)$ the unique $n$-th order cylinder containing $x$. Then the cylinder $C_{\om,n}(x)$ depends only on $C_{\om,1}(x)$ and the sets $\Lambda_{\sig^j\om,i_j},\,1\leq j<n$ so that $F^j_{\om} x\in\Lambda_{\sig^j\om,i_j}\times\{0\}$.
We define a separation time on $\Del_\om$ by setting\footnote{In terms of the maps $\{f_\om\}$, on the $\ell$-th level of the tower $\Del_\om$ we have that $s_\om(x,y)+\ell$ is the time the random orbit of $x_0$ and $y_0$ stays together in the sense that all the returns to the random bases occur thorough the same atom, where $x=(x_0,\ell)$ and $y=(y_0,\ell)$.}  $s_\om(x,y)$, $x,y\in\Del_\om$ to be the first time $n$ so that $x$ and $y$ do not belong to the same partition element in $\cC_{\om,n}$ (when there is no such $n$ we set $s_\om(x,y)=\infty$). 
We assume that the partition $C_{\om}=\bigvee_n C_{\om,n}$ separates point in the sense that
$$
\bigvee_n C_{\om,n}(x)=\{x\}.
$$

Next, let $m_\om$ be a family of probability measures on $\Del_{\om,0}$ so that with some $C>0$ for $\bbP$-almost all $\om$ we have 
\begin{equation}\label{A1}
\sum_{\ell=0}^\infty m_{\sig^{-\ell}_\om}(R_{\sig^{-\ell}\om}\geq \ell)\leq C.
\end{equation}
This family induces a finite uniformly bounded measure $\textbf{m}_\om$ on $\Del_\om$ by identifying $\Lambda_{\om,\ell,i}$ with $\Lambda_{\sig^{-\ell}\om,i}$. 
Henceforth, when there is no ambiguity, we will write $m_\om$ instead of $\textbf{m}_\om$. 

Let $JF_\om$ be the Jacobian corresponding to the map $F_\om:(\Del_\om,m_\om)\to (\Del_{\sig\om},m_{\sig_\om})$. Then $JF_\om$ equals $1$ on points $(x,\ell)$ so that $F(x,\ell)=(x,\ell+1)$.
 Let $\be\in(0,1)$.  We assume that there is a constant $A_1>0$ so that any $\ell\geq0$ and $x=(x_0,\ell),y=(y_0,\ell)\in\Del_{\om,\ell,i}$ with $R_{\sig^{-\ell}\om,i}=\ell+1$ we have
\begin{equation}\label{Distortion0}
\left|\frac{JF_\om x}{JF_\om y}-1\right|=\left|\frac{Jf_{\om_{-\ell}}^{R_{\om_{-\ell}}} x_0}{Jf_{\om_{-\ell}}^{R_{\om_{-\ell}}} y_0}-1\right|\leq A_1\be^{s_{\sig\om}(F_\om x,F_\om y)}
\end{equation}
where $\om_{-\ell}=\sig^{-\ell}\om$.
\begin{theorem}[Theorem 2.5 (i) in \cite{ABR}]\label{Thm 2.5 1}
There exists a strictly positive function $h_\om:\Del_\om\to\bbR$ 
and constants $c_0,c_1,c_2>0$ so that $\mathbb P$-almost surely $c_0\leq\inf h_\om\leq\sup h_\om\leq c_1$ and 
$|h_\om(x)-h_\om(y)|\leq c_2\be^{s_\om(x,y)}$ for all $x,y\in\Del_\om$. Moreover, $\int h_\om dm_\om=1$  and the family of measures $\mu_\om=h_\om dm_\om$ satisfies  $(F_\om)_*\mu_{\om}=\mu_{\sigma\om}$.
\end{theorem}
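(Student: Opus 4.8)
This is Theorem 2.5 (i) of \cite{ABR}, so one may simply cite it; the plan I would follow to prove it directly is the natural random analogue of the classical Young tower construction \cite{Y1,Y2}, and it doubles as a warm-up for the conic machinery of Section \ref{Sec4}. For $\bbP$-a.e. $\om$ let $\cL_\om$ denote the transfer operator of $F_\om\colon(\Del_\om,m_\om)\to(\Del_{\sig\om},m_{\sig\om})$, namely $(\cL_\om g)(y)=\sum_{x\in F_\om^{-1}\{y\}}g(x)/JF_\om(x)$, which by definition of the Jacobian satisfies $\int_{\Del_{\sig\om}}\cL_\om g\,dm_{\sig\om}=\int_{\Del_\om}g\,dm_\om$ for every $g\in L^1(m_\om)$. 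The key observation is that a family $h_\om$ with the asserted properties is exactly a strictly positive solution of the cocycle eigen-equation $\cL_\om h_\om=h_{\sig\om}$ normalized by $\int h_\om\,dm_\om=1$: indeed then $(F_\om)_*\mu_\om=(\cL_\om h_\om)\,m_{\sig\om}=h_{\sig\om}\,m_{\sig\om}=\mu_{\sig\om}$. So the statement reduces to producing this random fixed point with its quantitative bounds, and I would obtain it by a Birkhoff-cone contraction in the real Hilbert projective metric.

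First I would record the distortion estimate for iterates, obtained by telescoping \eqref{Distortion0} and using that $JF_\om\equiv1$ on the part of the tower where $F_\om$ moves a point up one level: there is $A>0$, independent of $\om$ and $n$, with $|JF_\om^n(x)/JF_\om^n(y)-1|\le A\,\be^{s_{\sig^n\om}(F_\om^n x,F_\om^n y)}$ whenever $x,y$ lie in a common atom of $C_{\om,n}$. Using this together with the uniform tail bound \eqref{A1}, I would fix constants $a,b>0$ and work with the random cone $\cC_\om$ of strictly positive functions on $\Del_\om$ that are locally log-Lipschitz with rate $\be^{s_\om(\cdot,\cdot)}$ and constant $b$ and whose size on each level $\Del_{\om,\ell}$ is controlled, through $a$, by $\int_{\Del_{\om,\ell}}(\cdot)\,dm_\om$; this is the cone adapted to towers, as in \cite{Y1}. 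Two facts then have to be established: (a) $\cL_\om\cC_\om\subseteq\cC_{\sig\om}$ for $\bbP$-a.e. $\om$ with the \emph{same} constants $a,b$ (a random Lasota--Yorke step --- the log-Lipschitz constant contracts up to a uniform additive error coming from the distortion bound, while the level-wise control is reproduced using $\int\cL_\om g\,dm_{\sig\om}=\int g\,dm_\om$ and the bound \eqref{A1} on the mass re-entering the base); and (b) after a uniformly bounded number of steps the image lands in a sub-cone of uniformly finite diameter in the Hilbert metric.

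Granting (a) and (b), a standard argument --- Birkhoff's theorem on strictly positive operators, in the random/sequential form used later via \cite{HK} --- shows that for any $g$ in the cone the normalized iterates $g_n:=\cL_{\sig^{-n}\om}^{(n)}g\,\big/\,\int g\,dm_{\sig^{-n}\om}$, which satisfy $\int g_n\,dm_\om=1$, form a Cauchy sequence in the Hilbert metric, hence converge in sup norm uniformly in $\om$, to a limit $h_\om\in\overline{\cC_\om}$ independent of $g$; uniqueness of the limit together with measurability of the approximants gives measurability of $\om\mapsto h_\om$, and since $\cL_\om g_n$ is precisely the corresponding one-step-longer iterate defining $h_{\sig\om}$, passing to the limit yields $\cL_\om h_\om=h_{\sig\om}$ and $\int h_\om\,dm_\om=1$. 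Finally, membership $h_\om\in\overline{\cC_\om}$ together with the normalization yields the uniform bounds $c_0\le\inf h_\om\le\sup h_\om\le c_1$ and the regularity $|h_\om(x)-h_\om(y)|\le c_2\be^{s_\om(x,y)}$, with $c_0>0$ because $h_\om$ spans a positive eigendirection of a strictly positive operator. The hard part will be the uniformity in $\om$ in (a)--(b) --- showing that the return-time / tail structure produces a Lasota--Yorke inequality and a finite projective diameter with constants independent of $\om$ --- which is exactly where the hypotheses \eqref{Distortion0}, \eqref{A1} and the point-separation of $\bigvee_n C_{\om,n}$ enter; the remainder is routine tower bookkeeping carried out fibrewise.
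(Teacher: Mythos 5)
The paper does not prove this theorem: it is cited verbatim from \cite{ABR}. So there is no ``paper's own proof'' to compare against, and your opening observation that one may simply cite it is the route actually taken here. Your sketch of a direct cone-based argument is a sensible warm-up for Section~\ref{Sec4}, and steps~(a) and the Cauchy-in-Hilbert-metric argument for the pullback iterates are the standard and correct skeleton; the reduction to the cocycle eigen-equation $\cL_\om h_\om=h_{\sig\om}$ and the telescoping of \eqref{Distortion0} are both right.

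There is, however, a genuine gap in step~(b). You assert that after a uniformly bounded number of iterations the image of $\cC_\om$ lands in a subcone of uniformly finite projective diameter, and you attribute this to \eqref{Distortion0}, \eqref{A1}, and the fact that $\bigvee_n C_{\om,n}$ separates points. Those three hypotheses do not suffice. Bounded distortion plus tail summability yield the Lasota--Yorke estimate~(a), and point-separation merely guarantees that the cylinder partition generates, but none of them forces any ``spreading'' of a cone function across the base under $\cL_\om$. For a concrete obstruction: if $R_\om\equiv K$ is constant and the induced base map $f_\om^K$ acts periodically (or permutes base atoms without overlap), every $\cL_\om^{n}g$ remains supported/structured exactly as $g$ was, the Hilbert diameter of the image cone stays infinite, and no equivariant density with $\inf h_\om>0$ need exist. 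The contraction step genuinely requires an aperiodicity-with-coverage input --- in this paper that role is played by Assumption~\ref{Ass Ap} (the $\gcd\{t_i\}=1$ condition with $m_\om(R_\om=t_i)>0$) together with Assumption~\ref{AsCov}, and in \cite{ABR} by their full-return / mixing-type structural condition --- and you should add it explicitly. Alternatively, if you only want existence of $h_\om$ (not uniqueness or mixing), you could replace the contraction in~(b) by a compactness/fixed-point argument in the closed convex set cut out by the Lasota--Yorke bounds, but as written the projective-contraction route needs the extra hypothesis.

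One smaller point: you should make explicit that the locally log-Lipschitz bound in your cone is taken level-by-level, consistent with the seminorm $|g|_\om=\sup_\ell|g|_{\om,\Del_{\om,\ell}}$ used in \eqref{g beta}; the constant function $\mathbf{1}$ must belong to the cone so that the pullback iterates are well-defined, and this forces the ``size on each level controlled by $\int_{\Del_{\om,\ell}}(\cdot)\,dm_\om$'' condition to be scaled using the weights entering \eqref{A1'}, exactly as in the paper's Section~\ref{Sec4}.
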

Under the assumptions presented in the next section the family of measures $\mu_\om$ is the unique family of absolutely continuous probability measures satisfying  $(F_\om)_*\mu_{\om}=\mu_{\sigma\om}$.

\subsection{Limit theorems: main results}
\subsubsection{Main assumptions}
Let $\varphi_\om:\Del_\om\to\bbR$, $\om\in\Om$ be a family of functions such that $\varphi(\om,x)=\varphi_{\om}(x)$ is measurable in both $\om$ and $x$
and for some $C_1,C_2>0$ for $\bbP$-almost every $\om$ and all $x,y\in\Del_\om$ we have
$$
|\varphi_\om(x)|\leq C_1\,\,\text{ and }\,\,|\varphi_\om(x)-\varphi_\om(y)|\leq C_2\beta^{s_\om(x,y)}.
$$
 For $\bbP$-almost all $\om$ we consider the functions
\[
S_n^\om \varphi=\sum_{j=0}^{n-1}\varphi_{\sig^j\om}\circ F_\om^j.
\]
In this section we will view $S_n^\om\varphi(x)$ as a sequence of random variables when $x$ is distributed according to either $\mu_\om$ or $m_\om/m_\om(\Del_\om)$. 

We will obtain our results under the following.
\begin{assumption}\label{Ass Ap}[Aperiodicity of return times]
There are $N_0$ and $t_1,t_2,...,t_{N_0}\in\bbN$ such that $\gcd\{t_i\}=1$ and $\bbP$-a.e. 
$m_\om(R_{\om}=t_i)>0$;  Moreover, $R_\om$ is a stopping time, namely the map $(\om,x)\to R_{\om}(x)$ is measurable and if $R_{\om}(x)=n$ then also $R_{\om'}(x)=n$, where $\om'$ is a sequence whose first $n$ coordinates are the same as $\om$.
\end{assumption}
\begin{assumption}\label{Ass Exp}[Exponential tails]
There are $c_1,c_2>0$ so that for ll $n\geq1$ and a.e. $\om$,
\begin{equation}\label{Exp}
m_\om(R_\om\geq n)\leq c_1e^{-c_2n}.
\end{equation}
\end{assumption}

We will also need the following assumption. 
\begin{assumption}[Uniform ``lower randomness"]\label{AsCov}
For any $\ve>0$ there are $J\in\bbN$ and $\del>0$ so that for $\bbP$-a.a. any $\om$ there are atoms $Q_{\om,i}=\Del_{\om,\ell_i(\om),j_i(\om)}$, $1\leq i\leq k_\om\leq J$ so that for all $i$,
\[
m_\om(Q_i)\geq\del
\]
and with $Q=\Del_\om\setminus(Q_1\cup Q_2\cup\cdots\cup Q_{k_\om})$, 
\begin{equation}\label{ve '}
\del\leq m_\om(Q)<\ve.
\end{equation}
\end{assumption}
\begin{remark}\label{RemApp}
In our applications in Section \ref{SecApp} we will use one of the following.

(i) Assumption \ref{AsCov} holds true in the following situation. Let us order the atoms of partition into cylinders of length $1$ according to their $\tilde m_\om$-measure. Let us denote by $Q_{\om,1},Q_{\om,2},...$ the ordered atoms. Then the condition holds true if the series $\sum_{j=1}^{\infty}\tilde m_\om(Q_{\om,i})$ converge uniformly in $\om$ and for any $i$,
\begin{equation}\label{Cond1}
\text{ess-inf } m_\om(Q_{\om,i})>0. 
\end{equation}
Let $\cR_{i,\om}$ be the return time corresponding to $Q_{\om,i}$. Then the ratio between $m_\om(Q_{\om,i})$ and $1/J f_{\sig^{-\ell}\om}^{\cR_{i,\om}}(x_0)$ is bounded and bounded away from $0$,  where $x=(x_0,\ell)$ is an arbitrary point in $A_{\om,i}$. Thus the assumption holds true if the Jacobian appearing in the above denominator is bounded from above uniformly in $i$.
\vskip.2cm

(ii) Assumption \ref{AsCov} holds also holds true when the tails $m_\om(R_\om\geq\ell)$ decay uniformly in $\om$ to $0$ as $\ell\to\infty$, the Jacobian (or the derivative) of $f_\om$ is uniformly bounded in $\om$ on $\Lam_{\om,i}$ for each $i$ (so that the measure of an atom $\Del_{\om,i}$ such that $R_{\om,i}\leq\ell$ is larger than some $\del_\ell>0$ which depends only on $\ell$) and for every $\ell$ large enough there is $k_\ell$ so that for $\bbP$ a.e. $\om$ the set $\{\ell<R_\om\leq \ell+k_\ell\}$ is nonempty.  
\end{remark}

As usual, in order to start describing the distributional limiting behavior of the random Birkhoff sums we need the following.
\begin{theorem}
Under Assumptions  \ref{Ass Ap}, \ref{Ass Exp} and \ref{AsCov}, 
there is number $\Sig^2\geq0$ so that $\bbP$-a.e. we have
\[
\Sig^2=\lim_{n\to\infty}\frac1n\text{Var}_{\mu_\om}(S_n^\om \varphi).
\]
Moreover, let $\mu$ be the measure with fibers $\mu_\om$, namely $\mu=\int \mu_\om dP(\om)$. Then 
$\Sig^2=0$ if and only if there is a function $r(\om,x)\in L^2(\mu)$ so that $\mu$-a.s. we have
\[
\varphi_\om(x)-\mu_\om(\varphi_\om)=r(\sig\om, F_\om x)-r(\om,x)=r(T(\om,x))-r(\om,x)
\]
where $T(\om,x)=(\sig\om,F_\om x)$ is the corresponding skew product map. Furthermore, when $\Sig^2>0$ then the sequence $\left(S_n^\om \varphi-\mu_\om(S_n^\om \varphi)\right)/\sqrt n$ converges in distribution towards a centered normal random variables with variance $\Sig^2$.
\end{theorem}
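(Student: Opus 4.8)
The plan is to reduce the statement to a question about the skew-product map $T(\om,x)=(\sig\om,F_\om x)$ acting on the tower bundle, and to feed the analytic input coming from the (complex) random RPF theorem announced in the introduction. Concretely, write $\psi_\om=\varphi_\om-\mu_\om(\varphi_\om)$, and for a complex parameter $z$ near $0$ consider the twisted transfer operators $\cL_\om^{z}g=\cL_\om(e^{z\varphi_\om}g)$, where $\cL_\om$ is the normalized transfer operator of $F_\om$ with respect to $\mu_\om$ (so $\cL_\om 1=1$). Under Assumptions \ref{Ass Ap}, \ref{Ass Exp} and \ref{AsCov} these operators preserve and strongly contract the complexified Birkhoff cones on the towers, so by the random complex spectral gap theory of \cite{HK} there exist measurable families $\lambda_\om(z)\in\bbC$, $h_\om^z$ and $\nu_\om^z$ with $\cL_\om^z h_\om^z=\lambda_\om(z)h_{\sig\om}^z$, $(\cL_\om^z)^*\nu_{\sig\om}^z=\lambda_\om(z)\nu_\om^z$, depending analytically on $z$ and reducing at $z=0$ to $\lambda_\om(0)=1$, $h_\om^0=1$, $\nu_\om^0=\mu_\om$. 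Setting $\Lambda_n(z,\om)=\prod_{j=0}^{n-1}\lambda_{\sig^j\om}(z)$, one gets the exact identity $\bbE_{\mu_\om}\!\left[e^{zS_n^\om\varphi}\right]=\Lambda_n(z,\om)\,\nu_{\sig^n\om}^z(h_{\sig^n\om}^z)\cdot(1+o(1))$-type control, and in particular the pressure function $\Pi(z)=\lim_n\frac1n\log\Lambda_n(z,\om)$ exists $\bbP$-a.s. and is independent of $\om$ by the subadditive ergodic theorem applied to $\log|\lambda|$ together with the ergodicity of $\sig$; here one uses that $\sig$ is the (two-sided Bernoulli) shift, hence ergodic, and that $\lambda_\om(z)$ is bounded and bounded away from $0$.

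Next I would extract the variance. Differentiating the eigenvalue relation twice at $z=0$ and using $\cL_\om 1=1$, $\mu_\om(\varphi_\om)=\nu_{\sig\om}^0(\cL_\om\varphi_\om)$, one obtains $\Pi'(0)=0$ and $\Pi''(0)=\lim_n\frac1n\mathrm{Var}_{\mu_\om}(S_n^\om\varphi)=:\Sig^2$, the limit existing $\bbP$-a.s.\ and being deterministic; this is the first assertion. The characterization of the degenerate case $\Sig^2=0$ is the coboundary dichotomy: if $\varphi_\om-\mu_\om(\varphi_\om)=r\circ T-r$ with $r\in L^2(\mu)$ then $S_n^\om\varphi-\mu_\om(S_n^\om\varphi)=r(T^n(\om,x))-r(\om,x)$ is bounded in $L^2$, so $\frac1n\mathrm{Var}_{\mu_\om}(S_n^\om\varphi)\to0$, forcing $\Sig^2=0$. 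Conversely, if $\Sig^2=0$ then $S_n^\om\varphi-\mu_\om(S_n^\om\varphi)$ is bounded in $L^2(\mu)$ uniformly in $n$; one then runs the standard Leonov/Gordin-type argument — take a weak-$L^2$ limit point of $u_n=\sum_{j=0}^{n-1}\psi\circ T^j$ along a subsequence averaged Cesàro-wise, or solve the cohomological equation via the series $r=\sum_{k\ge0}\bbE[\psi\circ T^k\mid \cdot]$ whose convergence in $L^2(\mu)$ follows from the exponential decay of correlations supplied by the spectral gap at $z=0$ — to produce the desired transfer function $r\in L^2(\mu)$ with $\psi=r\circ T-r$. The exponential-tail hypothesis is exactly what guarantees the summability making this series converge.

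Finally, for the CLT when $\Sig^2>0$, I would apply the Nagaev–Guivarc'h perturbation scheme in its random incarnation from \cite[Ch.~7]{HK}: for fixed real $t$ put $z=it/\sqrt n$ and use analyticity of $z\mapsto(\lambda_\om(z),h_\om^z,\nu_\om^z)$ to expand
\[
\log\Lambda_n\!\left(\tfrac{it}{\sqrt n},\om\right)=\sum_{j=0}^{n-1}\log\lambda_{\sig^j\om}\!\left(\tfrac{it}{\sqrt n}\right)=\frac{it}{\sqrt n}\sum_{j=0}^{n-1}\ell'_{\sig^j\om}(0)-\frac{t^2}{2n}\sum_{j=0}^{n-1}\bigl(\ell''_{\sig^j\om}(0)-(\ell'_{\sig^j\om}(0))^2\bigr)+o(1),
\]
where $\ell_\om=\log\lambda_\om$. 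Birkhoff's ergodic theorem for $\sig$ turns the first sum (after subtracting $\mu_\om(S_n^\om\varphi)$, which cancels the $O(\sqrt n)$ drift by the identity $\mu_\om(S_n^\om\varphi)=\sum_j\ell'_{\sig^j\om}(0)+O(1)$) into a negligible term and the second into $-\frac{t^2}{2}\Sig^2+o(1)$; meanwhile $\nu_{\sig^n\om}^{it/\sqrt n}(h_{\sig^n\om}^{it/\sqrt n})\to1$ uniformly by continuity at $z=0$ and the uniform bounds on the cone data. Hence $\bbE_{\mu_\om}\!\bigl[e^{it(S_n^\om\varphi-\mu_\om(S_n^\om\varphi))/\sqrt n}\bigr]\to e^{-t^2\Sig^2/2}$ for $\bbP$-a.e.\ $\om$, and Lévy's continuity theorem gives the asserted convergence in distribution to $\mathcal N(0,\Sig^2)$. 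The main obstacle, and the place where real work is needed rather than bookkeeping, is establishing the measurable analytic dependence of $(\lambda_\om(z),h_\om^z,\nu_\om^z)$ on $z$ together with uniform-in-$\om$ bounds on a fixed complex neighbourhood of $0$ — i.e.\ verifying the hypotheses of the random complex cone contraction theorem of \cite{HK} for the tower transfer operators — since without uniformity the $o(1)$ error terms above are not controlled along a.e.\ trajectory; this is precisely what the random Lasota–Yorke inequality and the cone constructions of Section \ref{Sec4} are designed to deliver, so at this stage I would simply cite those once proven.
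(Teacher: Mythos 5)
The paper's actual proof of this theorem is a one-line citation: it invokes \cite[Theorem~2.3]{K98} (Kifer's general CLT/variance dichotomy for random transformations) and verifies its hypotheses via Theorem~\ref{RPF}, in particular the exponential decay of correlations~(\ref{ExpCor}). Your proposal instead re-derives the result from scratch by the Nagaev--Guivarc'h spectral method, differentiating the random eigenvalue $\lambda_\om(z)$ at $z=0$ and taking Birkhoff averages of the derivatives. Both routes work: the paper's is shorter because it outsources the probabilistic machinery to a black-box theorem, while yours is more self-contained and makes the role of $\Pi''(0)$ transparent; the underlying analytic input (the random RPF theorem) is the same in either case.

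Two things to tidy in your write-up. First, you invoke ``the subadditive ergodic theorem applied to $\log|\lambda|$''; since $\log\Lambda_n(z,\om)=\sum_{j=0}^{n-1}\log\lambda_{\sig^j\om}(z)$ is exactly additive, the ordinary Birkhoff ergodic theorem suffices, and this is what is actually used (together with ergodicity of the two-sided Bernoulli shift) to conclude that the limits are deterministic. Second, in the Taylor expansion of $\log\Lambda_n(it/\sqrt n,\om)$ the coefficient of $-t^2/(2n)$ should simply be $\sum_j\ell''_{\sig^j\om}(0)$ with $\ell_\om=\log\lambda_\om$; the extra $-(\ell'_{\sig^j\om}(0))^2$ you wrote appears to come from confusing $\ell''$ with $\lambda''$ (one has $\ell''(0)=\lambda''(0)-(\lambda'(0))^2$, so the subtraction is already baked into $\ell''$). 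This is only a bookkeeping slip and does not affect the conclusion, but as written the displayed expansion is inconsistent with the chain rule. Finally, the degenerate-case argument (``take a weak limit \ldots\ or solve the cohomological equation via the series $r=\sum_{k\ge0}\bbE[\psi\circ T^k\mid\cdot]$'') is the right idea but is the least rigorous part of the sketch; in the paper this dichotomy is exactly what \cite[Theorem~2.3]{K98} supplies once exponential mixing with respect to the fibered measures is in hand, so if you want a complete self-contained proof you should spell out the Gordin-type martingale--coboundary decomposition $g=\sum_{k\ge1}\cL^k\psi$ and check that $g\in L^2(\mu)$ by the spectral gap.
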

This theorem follows from \cite[Theorem 2.3]{K98} together with Theorem \ref{RPF} in the present manuscript.  We note that the theorem also holds true when the tails are of order $o(n^{-3-\del})$ for some $\del>0$, but since we need the exponential tails to prove our main results we prefer to focus on the exponential case.
\begin{remark}
By \cite{EagHaf} and (\ref{ExpCor}) we get the CLT also when the initial measure is $\bar m_\om:=m_\om/m_\om(\Del_\om)$ (in this case the mean and the variance are taken with respect to $\bar m_\om$, as well). 
\end{remark}

\subsubsection{A Berry-Esseen theorem and a local CLT}
Our first result here is optimal convergence rate in the self-normalized version of the above CLT.
 
\begin{theorem}[A Berry-Esseen theorem]\label{BE THM}
Under Assumptions  \ref{Ass Ap}, \ref{Ass Exp} and \ref{AsCov} we have the following.
\vskip0.2cm
(1) Set $\Sig_{\om,n}=\sqrt{\text{Var}_{\mu_\om}(S_n^\om\varphi)}$.
There is a random variable $c_\om>0$ so that $\bbP$-a.s. for every $n\geq1$ we have
\[
\sup_{t\in\bbR}\left|\mu_\om\left\{x: S_n^\om \varphi(x)-\mu_\om(S_n^\om \varphi)\leq t\Sig_{\om,n}\right\}-\frac{1}{\sqrt{2\pi}}\int_{-\infty}^{t}e^{-t^2/2}dt\right|\leq c_\om n^{-1/2}.
\]
\vskip0.2cm
(2) Let $v_{\om, n}$ denote the variance of $S_n^\om \varphi$ with respect to the reference measure $\bar m_\om=m_\om/m_\om(\Del_\om)$. Then 
\begin{equation}\label{VarDif}
\text{ess-sup}\sup_n|v_{\om, n}-\Sig_{\om,n}^2|<\infty
\end{equation}
and there is a random variable $d_\om>0$ so that $\bbP$-a.s. for all $n\geq1$ we have
\[
\sup_{t\in\bbR}\left| \bar m_\om\left\{x: S_n^\om \varphi(x)- \bar m_\om(S_n^\om \varphi)\leq t\sqrt{v_{\om,n}}\right\}-\frac{1}{\sqrt{2\pi}}\int_{-\infty}^{t}e^{-t^2/2}dt\right|\leq d_\om n^{-1/2}.
\]
\end{theorem}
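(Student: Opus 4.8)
The plan is to run the Nagaev--Guivarch spectral scheme on the random tower, using the random complex RPF theorem (Theorem \ref{RPF}) as the only non-elementary ingredient, and then to feed the resulting control of the characteristic functions into the Esseen smoothing inequality; I assume throughout that $\Sig^2>0$ (the case $\Sig^2=0$ is degenerate, as then $\Sig_{\om,n}$ need not even be positive). Let $\cL_\om^z$ denote the fibrewise normalized complex transfer operators of Theorem \ref{RPF} (so $\cL_\om^0$ is dual to composition by $F_\om$ on $L^1(\mu_\om)$, $\cL_\om^0\mathbf 1=\mathbf 1$, $\cL_\om^z u=\cL_\om^0(e^{z\varphi_\om}u)$) and $\cL_\om^{z,n}=\cL_{\sig^{n-1}\om}^z\circ\cdots\circ\cL_\om^z$, which yields $\mu_\om(e^{zS_n^\om\varphi})=\mu_{\sig^n\om}(\cL_\om^{z,n}\mathbf 1)$ and, since $d\mu_\om=h_\om m_\om(\Del_\om)\,d\bar m_\om$ with $h_\om^{-1}$ bounded and Lipschitz in the separation time (Theorem \ref{Thm 2.5 1}) and $1\le m_\om(\Del_\om)\le C$ by \eqref{A1}, also $\bar m_\om(e^{zS_n^\om\varphi})=m_\om(\Del_\om)^{-1}\mu_{\sig^n\om}(\cL_\om^{z,n}(h_\om^{-1}))$. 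Theorem \ref{RPF} gives $\rho\in(0,1)$, $r_0>0$ and uniformly bounded, analytic-in-$z$ triples $(\lambda_\om^z,h_\om^z,\nu_\om^z)$ on $\{|z|\le r_0\}$ with $(\lambda_\om^0,h_\om^0,\nu_\om^0)=(1,\mathbf 1,\mu_\om)$ and $\cL_\om^{z,n}u=\lambda_\om^{z,n}\bigl(\nu_\om^z(u)h_{\sig^n\om}^z+R_{\om,n}^z(u)\bigr)$, $\lambda_\om^{z,n}=\prod_{j<n}\lambda_{\sig^j\om}^z$, $\|R_{\om,n}^z\|\le C\rho^n$ uniformly; plugging in $u=\mathbf 1$ and $u=h_\om^{-1}$ and reading off the values at $z=0$ (where $\nu_\om^0(h_\om^{-1})=\mu_\om(h_\om^{-1})=m_\om(\Del_\om)$) I get
\[
\mu_\om(e^{zS_n^\om\varphi})=\lambda_\om^{z,n}\bigl(A_{\om,n}(z)+\ve_{\om,n}(z)\bigr),\qquad \bar m_\om(e^{zS_n^\om\varphi})=\lambda_\om^{z,n}\bigl(\tilde A_{\om,n}(z)+\tilde\ve_{\om,n}(z)\bigr),
\]
with $A_{\om,n},\tilde A_{\om,n}$ analytic, uniformly bounded and bounded away from $0$, $A_{\om,n}(0)=\tilde A_{\om,n}(0)=1$, and $\ve_{\om,n},\tilde\ve_{\om,n}$ analytic of sup norm $\le C\rho^n$ and vanishing at $z=0$ (hence $\le C'\rho^n|z|$ on $\{|z|\le r_0/2\}$). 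The structural point is that the eigenvalue factor $\lambda_\om^{z,n}$ is the \emph{same} in both identities.

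I would then set $\Pi_{\om,n}(z)=\log\lambda_\om^{z,n}=\sum_{j<n}\log\lambda_{\sig^j\om}^z$ (branch with $\Pi_{\om,n}(0)=0$), note that Cauchy estimates bound $|\Pi_{\om,n}'|,|\Pi_{\om,n}''|,|\Pi_{\om,n}'''|$ by $Cn$ on $\{|z|\le r_0/2\}$ and make $\Pi_{\om,n}'(0),\Pi_{\om,n}''(0)$ real, and read from the two cumulant generating functions (the $A$-factors contributing $O(1)$, the $\ve$-remainders $O(\rho^n)$, by Cauchy estimates) that $\mu_\om(S_n^\om\varphi)=\Pi_{\om,n}'(0)+O(1)$, $\Sig_{\om,n}^2=\Pi_{\om,n}''(0)+O(1)$, and likewise $\bar m_\om(S_n^\om\varphi)=\Pi_{\om,n}'(0)+O(1)$, $v_{\om,n}=\Pi_{\om,n}''(0)+O(1)$, all $O(1)$'s uniform in $\om,n$; subtracting the two variance relations gives \eqref{VarDif}. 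Since $\frac1n\Pi_{\om,n}''(0)=\frac1n\sum_{j<n}\tau_{\sig^j\om}^2$ with $\tau_\om^2=(\log\lambda_\om^z)''|_{z=0}\ge0$, the ergodic theorem for the Bernoulli shift gives $\frac1n\Pi_{\om,n}''(0)\to\bbE[\tau_\om^2]=\Sig^2>0$ for $\bbP$-a.e.\ $\om$; I fix such an $\om$ and a threshold $N_\om$ beyond which $\Pi_{\om,n}''(0)\ge\frac12 n\Sig^2$ and $\Sig_{\om,n}^2,v_{\om,n}\ge\frac14 n\Sig^2$ (for $n<N_\om$ the claimed bound is trivial and absorbed in $c_\om,d_\om$).

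Next I would fix $\ve_0\le r_0/2$ small enough that also $\ve_0\le\Sig^2/(8C)$ ($C$ the bound on $|\Pi_{\om,n}'''|/n$) and apply the Esseen smoothing inequality to the distribution function $F_{\om,n}$ of $(S_n^\om\varphi-\mu_\om(S_n^\om\varphi))/\Sig_{\om,n}$ under $\mu_\om$ with $T=\ve_0\Sig_{\om,n}$, which is $\ge c\sqrt n$ for $n\ge N_\om$ so that the boundary term $C/T$ is $\le C'n^{-1/2}$. With $\hat F_{\om,n}(s)=e^{-is\mu_\om(S_n^\om\varphi)/\Sig_{\om,n}}\mu_\om(e^{i(s/\Sig_{\om,n})S_n^\om\varphi})$ and $t=s/\Sig_{\om,n}$ (so $|s|\le T\Rightarrow|t|\le\ve_0$) I would split $\int_{-T}^{T}\frac{|\hat F_{\om,n}(s)-e^{-s^2/2}|}{|s|}\,ds$ at $|s|=\del_1\sqrt n$. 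On $\del_1\sqrt n<|s|\le T$, Taylor expansion gives $\mathrm{Re}\,\Pi_{\om,n}(it)\le-\tfrac12\Pi_{\om,n}''(0)t^2+Cn|t|^3\le nt^2(-\tfrac14\Sig^2+C|t|)\le-\tfrac18\Sig^2\del_1^2\,n$ by the choice of $\ve_0$, so $|\hat F_{\om,n}(s)|\le Ce^{\mathrm{Re}\,\Pi_{\om,n}(it)}$ is exponentially small and this part contributes $O(ne^{-cn})=o(n^{-1/2})$. On $|s|\le\del_1\sqrt n$, substituting $-is\mu_\om(S_n^\om\varphi)/\Sig_{\om,n}=-i\Pi_{\om,n}'(0)t+O(|t|)$, $\Pi_{\om,n}(it)=i\Pi_{\om,n}'(0)t-\tfrac12\Pi_{\om,n}''(0)t^2+O(n|t|^3)$, $\Pi_{\om,n}''(0)t^2=s^2+O(s^2/n)$, $A_{\om,n}(it)=1+O(|t|)$, $\ve_{\om,n}(it)=O(\rho^n|t|)$ gives $\hat F_{\om,n}(s)=e^{-s^2/2}e^{P_{\om,n}(s)}$ with $|P_{\om,n}(s)|\le C(|s|+|s|^3)n^{-1/2}+Cs^2n^{-1}$, so (for $\del_1$ small enough that $|P_{\om,n}|\le1$ there) $|\hat F_{\om,n}(s)-e^{-s^2/2}|\le Ce^{-s^2/2}|P_{\om,n}(s)|$ and this part contributes $\le Cn^{-1/2}\int_\bbR(1+|s|+s^2)e^{-s^2/2}\,ds=O(n^{-1/2})$ (the integrand is bounded near $0$). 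This proves (1). For (2) I would repeat this last step verbatim with $\bar m_\om,v_{\om,n},\bar m_\om(S_n^\om\varphi),\tilde A_{\om,n},\tilde\ve_{\om,n}$ replacing $\mu_\om,\Sig_{\om,n}^2,\mu_\om(S_n^\om\varphi),A_{\om,n},\ve_{\om,n}$: the eigenvalue factor $\lambda_\om^{z,n}$, hence $\Pi_{\om,n}$, is unchanged, so the large-frequency decay and the small-frequency Gaussian expansion are identical, while \eqref{VarDif} and $v_{\om,n}/n\to\Sig^2>0$ were already obtained; alternatively (2) follows from (1) via $d\mu_\om=h_\om m_\om(\Del_\om)\,d\bar m_\om$ and exponential decay of correlations (cf.\ \cite{EagHaf}, \cite[Ch.~7]{HK}).

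The main obstacle is not any single estimate---given Theorem \ref{RPF} the argument is standard (cf.\ \cite[Ch.~7]{HK}, and \cite{CP} for deterministic subshifts of finite type)---but the uniformity bookkeeping: one has to keep apart the deterministic outputs of the random RPF theorem (the contraction rate $\rho$, the radius $r_0$, the uniform bounds on $\lambda_\om^z,h_\om^z,\nu_\om^z$ and their $z$-derivatives, and on the remainder) from the merely $\bbP$-a.e.\ finite quantities (the threshold $N_\om$, hence the eventual $c_\om,d_\om$). A second point worth emphasizing is that because the whole relevant frequency window $\{|t|\le\ve_0\}$ lies \emph{inside} the RPF neighbourhood, no separate non-lattice estimate for the characteristic function is required: the exponential smallness for $|t|$ bounded away from $0$ comes purely from $\Sig^2>0$ via the ergodic theorem, and Assumptions \ref{Ass Ap}--\ref{AsCov} enter only through the existence of Theorem \ref{RPF} and the positivity and convergence of $\Sig^2$.
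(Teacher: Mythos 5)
Your proposal follows essentially the same Nagaev--Guivar\'ch route as the paper: write the $\mu_\om$- and $\bar m_\om$-characteristic functions through the random complex RPF triple of Theorem~\ref{RPF}, observe that both share the same dominant eigenvalue factor $\la_{\om,n}(z)=e^{\Pi_{\om,n}(z)}$, control $\Pi_{\om,n}$ and the analytic correction factor near $z=0$, and feed this into Esseen's smoothing inequality with a cutoff of order $\sqrt n$. The overall architecture matches the paper's proof (which itself refers to \cite{HafSDS} for part~(1) and writes out part~(2)), so I'll only comment on the handful of places where your execution differs or is slightly off.

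For \eqref{VarDif}, the paper invokes the variance-comparison machinery of \cite{EagHaf} (Propositions 3.2--3.3), fed by the exponential decorrelation \eqref{ExpCor}, whereas you derive both $|\Sig_{\om,n}^2-\Pi_{\om,n}''(0)|\le C$ and $|v_{\om,n}-\Pi_{\om,n}''(0)|\le C$ directly from Cauchy estimates on the two cumulant generating functions and subtract. This is a legitimate and arguably cleaner shortcut, provided you make explicit that the correction factors $A_{\om,n}(z)+\ve_{\om,n}(z)$ and $\tilde A_{\om,n}(z)+\tilde\ve_{\om,n}(z)$ are not only bounded but bounded away from $0$ on a fixed neighbourhood of the origin, uniformly in $\om,n$ (this is where \eqref{UnifBound}, $G_{\om,n}(0)=1$ and the uniform boundedness of $\del_{\om,n}$ in the paper's argument are used to justify taking a logarithm). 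Relatedly, the paper reduces to $\mu_\om(\varphi_\om)=0$ and then fixes up the centering by $\bar m_\om(S_n^\om\varphi)$ at the very end via \cite[Lemma 3.3]{HK-BE}, while you center by $\bar m_\om(S_n^\om\varphi)$ from the outset; both are fine. You also use a $\mu_\om$-normalized operator ($\cL_\om^0\textbf{1}=\textbf{1}$, $\nu_\om^{(0)}=\mu_\om$) where the paper uses the $\tilde m_\om$-normalized one of Theorem~\ref{RPF} (with $h_\om^{(0)}=\tilde h_\om$, $\nu_\om^{(0)}=\tilde m_\om$); the two are conjugate by multiplication by $\tilde h_\om$, which preserves all the uniform bounds, so this is a harmless change of convention but you should flag it rather than call it ``the'' operator of Theorem~\ref{RPF}.

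Two small corrections. The claim that $\tau_\om^2=(\log\la_\om)''(0)\ge0$ fiberwise is not justified (and is not needed): for a single fiber there is no reason for this second derivative to be nonnegative; all you use is the ergodic theorem giving $\tfrac1n\Pi_{\om,n}''(0)\to\bbE[\tau_\om^2]=\Sig^2>0$, which holds regardless of the sign of individual $\tau_\om^2$. Secondly, your explicit hypothesis $\Sig^2>0$ is a reasonable reading: the paper's statement does not include it, but the paper's proof itself uses ``$v_{\om,n}$ grows linearly fast in $n$,'' which is exactly this assumption; when $\Sig^2=0$ the rescaled sums do not converge to a standard normal and the assertion as stated cannot hold, so this is a gap in the paper's statement rather than in your argument.
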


Our next result is a local central limit theorem (LCLT). Let us begin with a formulation which is suitable for aperiodic cases.

\begin{theorem}[LCLT, aperiodic case]\label{LLT1}
Let Assumptions  \ref{Ass Ap}, \ref{Ass Exp} and \ref{AsCov} hold.
Suppose also that $\bbP$-a.s. for every compact set $J\subset\bbR\setminus\{0\}$ we have
\begin{equation}\label{A3}
\lim_{n\to\infty}\sqrt{n}\sup_{t\in J}|\mu_\om(e^{it S_n^\om \varphi})|=0.
\end{equation}
Then $\bbP$-a.s. for any continuous function $g:\bbR\to\bbR$ with compact support (or an indicator of a finite interval) we have
\[
\lim_{n\to\infty}\sup_{t\in\bbR}\left|\sqrt{2\pi n}\Sig\int g(S_n^\om \varphi(x)-\mu_\om(S_n^\om \varphi)-t)d\mu_\om(x)-e^{-\frac{t^2}{2n\Sig^2}}\int_{-\infty}^\infty g(x)dx\right|=0.
\]
The same result holds true with $\bar m_\om$ in place of $\mu_\om$ assuming that (\ref{A3}) holds true with $ \bar m_\om$.
\end{theorem}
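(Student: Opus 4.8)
The plan is to deduce the local limit theorem from the random complex RPF theorem (Theorem \ref{RPF}) by a Fourier--inversion argument of Nagaev--Guivarc'h type, in the spirit of \cite[Ch.~7]{HK}. We may assume $\Sig^2>0$: otherwise $S_n^\om\varphi-\mu_\om(S_n^\om\varphi)$ is a uniformly bounded coboundary and (\ref{A3}) cannot hold. Write $b_n^\om=\mu_\om(S_n^\om\varphi)$, and for $z$ in a fixed complex neighbourhood of $0$ let $\cL_\om^z$ be the transfer operator of $F_\om$ with respect to $m_\om$, twisted by $e^{z\varphi_\om}$, normalised so that $\cL_\om^{z,n}:=\cL_{\sig^{n-1}\om}^z\circ\cdots\circ\cL_\om^z$ satisfies $\int(\cL_\om^{z,n}h_\om)\,dm_{\sig^n\om}=\mu_\om(e^{zS_n^\om\varphi})$. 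Fixing a Fourier convention and taking $g$ with $\hat g$ supported in $[-A,A]$, one rewrites the quantity inside $\sup_t|\cdot|$ in the statement as
\[
\sqrt{2\pi n}\,\Sig\int_{-A}^{A}e^{-i\xi t}\Bigl[\hat g(\xi)\phi_n^\om(\xi)-\hat g(0)e^{-n\Sig^2\xi^2/2}\Bigr]\,d\xi+r_n^\om(t),
\]
where $\phi_n^\om(\xi)=e^{-i\xi b_n^\om}\mu_\om(e^{i\xi S_n^\om\varphi})$ and $|r_n^\om(t)|\le\sqrt{2\pi n}\,\Sig\,|\hat g(0)|\int_{|\xi|>A}e^{-n\Sig^2\xi^2/2}\,d\xi\to0$ uniformly in $t$. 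So it is enough to prove, for $\bbP$-a.e.\ $\om$, that $\sqrt n\int_{-A}^{A}\bigl|\hat g(\xi)\phi_n^\om(\xi)-\hat g(0)e^{-n\Sig^2\xi^2/2}\bigr|\,d\xi\to0$; a standard sandwiching of $g$ between functions with compactly supported transforms then upgrades this to all continuous compactly supported $g$ and to indicators of finite intervals.

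Next I would split $[-A,A]=\{|\xi|\le\del\}\cup\{\del\le|\xi|\le A\}$ for small $\del>0$. On the outer piece, (\ref{A3}) applied to $J=\{\del\le|\xi|\le A\}$ gives $\sqrt n\sup_{J}|\mu_\om(e^{i\xi S_n^\om\varphi})|\to0$, while $\sqrt n\int_{|\xi|\ge\del}e^{-n\Sig^2\xi^2/2}\,d\xi\to0$ since $\Sig^2>0$; this handles $\{\del\le|\xi|\le A\}$. On the inner piece I would use Theorem \ref{RPF}: for $z$ near $0$ there are analytic families $\la_\om(z)$ (leading eigenvalue, $\la_\om(0)=1$), $h_\om^z$, $\nu_\om^z$, with bounds uniform in $\om$, and $\theta\in(0,1)$, such that $\cL_\om^{z,n}h_\om=\la_\om(z,n)\tilde\gam_n^\om(z)h_{\sig^n\om}^z+O(\theta^n)$ with $\la_\om(z,n)=\prod_{j=0}^{n-1}\la_{\sig^j\om}(z)$, $\tilde\gam_n^\om(0)=1$ and $|\partial_z\tilde\gam_n^\om|\le C$ uniformly; integrating against $m_{\sig^n\om}$ yields $\mu_\om(e^{i\xi S_n^\om\varphi})=\la_\om(i\xi,n)\tilde\gam_n^\om(i\xi)+O(\theta^n)$. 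Writing $\Pi_n^\om(z)=\log\la_\om(z,n)=\sum_{j<n}\log\la_{\sig^j\om}(z)$, one has $\Pi_n^\om(0)=0$, $(\Pi_n^\om)'(0)=b_n^\om$ (by the equivariance $(F_\om)_*\mu_\om=\mu_{\sig\om}$; an $O(1)$ error here would be harmless, since we centre by $b_n^\om$), $(\Pi_n^\om)''(0)=\Sig_{\om,n}^2+O(1)$ with $\Sig_{\om,n}^2=\text{Var}_{\mu_\om}(S_n^\om\varphi)$, and --- shrinking $\del$ if needed --- the decay $|\la_{\sig^j\om}(i\xi)|\le e^{-c\xi^2}$ together with a cubic Taylor bound $|\Pi_n^\om(i\xi)-i\xi\,(\Pi_n^\om)'(0)+\frac12\Sig_{\om,n}^2\xi^2|\le Cn|\xi|^3$ for $|\xi|\le\del$, with $c,C$ independent of $\om,n$.

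Finally, rescaling $\xi=u/\sqrt n$ turns the inner integral into $\sqrt{2\pi}\,\Sig\int_{|u|\le\del\sqrt n}\bigl|\hat g(u/\sqrt n)\phi_n^\om(u/\sqrt n)-\hat g(0)e^{-u^2\Sig^2/2}\bigr|\,du$. The estimates above give the $n$-independent domination $|\phi_n^\om(u/\sqrt n)|\le Ce^{-cu^2}+C\theta^n$, the $\theta^n$ part integrating to $O(\theta^n\sqrt n)\to0$; and for each fixed $u$ one checks $\phi_n^\om(u/\sqrt n)\to e^{-u^2\Sig^2/2}$, using the cubic bound (the error is $O(|u|^3/\sqrt n)\to0$), the $\bbP$-a.s.\ limit $\Sig_{\om,n}^2/n\to\Sig^2$ from the variance theorem, $\tilde\gam_n^\om(iu/\sqrt n)\to1$, continuity of $\hat g$ at $0$, and the cancellation of the linear term by the centring; dominated convergence then finishes the inner part. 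The $\bar m_\om$-version is obtained by replacing $h_\om$ with the constant function $1$, so that $\int(\cL_\om^{z,n}1)\,dm_{\sig^n\om}=m_\om(e^{zS_n^\om\varphi})$, using (\ref{A3}) for $\bar m_\om$ and the bound (\ref{VarDif}) from Theorem \ref{BE THM} to get $\text{Var}_{\bar m_\om}(S_n^\om\varphi)/n\to\Sig^2$. I expect the only genuine difficulty to be the uniformity in $\om$ and $n$ of all the spectral data invoked from Theorem \ref{RPF} (the constants $\theta,c,C$, the analyticity and derivative bounds on $\la_{\sig^j\om}$ and $\tilde\gam_n^\om$, and the quadratic decay $|\la_{\sig^j\om}(i\xi)|\le e^{-c\xi^2}$); granting that, the rest is the bookkeeping in the display above that makes everything uniform in the shift parameter $t$.
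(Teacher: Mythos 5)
Your proposal takes essentially the same route as the paper. The paper's proof of Theorem~\ref{LLT1} is a citation of a general Nagaev--Guivarc'h local CLT, \cite[Theorem 2.2.3]{HK}, after verifying its three hypotheses: the CLT, the Gaussian bound $|e^{\Pi_{\om,n}(it)}|\le c_1e^{-c_2nt^2}$ on a $\del$-neighborhood of the origin (obtained from the analytic RPF data via~\eqref{CharFunc}, \eqref{CharFunc1}, \eqref{C1}), and condition~(\ref{A3}) away from the origin. Your write-up is precisely the Fourier-inversion computation that lives inside that cited black box (pass to $\hat g$ supported in $[-A,A]$, split $[-A,A]$ into $\{|\xi|\le\del\}$ and $\{\del\le|\xi|\le A\}$, handle the outer piece by~(\ref{A3}) and the inner piece by the spectral expansion plus rescaling $\xi=u/\sqrt n$ and dominated convergence), so the two are the same argument at different levels of unrolling.

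One technical slip is worth flagging. The per-step bound $|\la_{\sig^j\om}(i\xi)|\le e^{-c\xi^2}$ with $c>0$ independent of $j$ and $\om$ is not delivered by Theorem~\ref{RPF} and is false in general: $\log\la_\om(i\xi)$ vanishes at $\xi=0$ with second derivative of size $\text{Var}_{\mu_\om}(\varphi_\om)+O(1)$, which need not be bounded below by a positive constant uniformly in $\om$. What the paper actually establishes, and what your argument actually needs, is the \emph{cumulative} estimate $|e^{\Pi_{\om,n}(i\xi)}|\le c_1 e^{-c_2 n\xi^2}$ for $|\xi|\le\del$ and $n$ large, which comes from the Taylor expansion of the sum $\Pi_{\om,n}=\sum_{j<n}\log\la_{\sig^j\om}$ together with $\Pi_{\om,n}''(0)=\Sig_{\om,n}^2+O(1)$ and $\Sig_{\om,n}^2/n\to\Sig^2>0$ (cf.~\eqref{C1}, \eqref{Re press0}), not from any individual eigenvalue bound. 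With that corrected, your $n$-uniform domination $|\phi_n^\om(u/\sqrt n)|\le Ce^{-cu^2}+C\theta^n$ on $|u|\le\del\sqrt n$ stands, the pointwise limit and dominated convergence go through, and the $\bar m_\om$ case follows as you say via~\eqref{Fib CLT rel 1}, \eqref{Fib CLT rel 2} and~(\ref{VarDif}).
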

Note that condition (\ref{A3}) excludes the case that $S_n^\om \varphi$ take valued in some lattice $\bbZ h=\{kh:\,k\in\bbZ\}$, $h>0$. In the lattice case we have the following.

\begin{theorem}[LCLT, lattice case]\label{LLT2}
Let Assumptions  \ref{Ass Ap}, \ref{Ass Exp} and \ref{AsCov} hold.
Suppose also that there is an $h>0$ so that $S_n^\om \varphi\in h\bbZ$ for any $n$ and $\bbP$-almost all $\om$. Assume also that $\bbP$-a.s. for every compact set $J\subset[-\pi/h,\pi/h]\setminus\{0\}$ we have
\begin{equation}\label{A3'}
\lim_{n\to\infty}\sqrt{n}\sup_{t\in J}|\mu_\om(e^{it S_n^\om \varphi})|=0.
\end{equation}
Then $\bbP$-a.s. for any continuous function $g:\bbR\to\bbR$ with compact support (or an indicator of a finite interval) we have
\[
\lim_{n\to\infty}\sup_{k\in\bbZ}\left|\sqrt{2\pi n}\Sig\int g(S_n^\om \varphi(x)-\mu_\om(S_n^\om \varphi)-kh)d\mu_\om(x)-e^{-\frac{(kh)^2}{2n\Sig^2}}\sum_{m\in\bbZ}g(mh)\right|=0.
\]
The same result holds true with $\bar m_\om$ in place of $\mu_\om$ assuming that (\ref{A3'}) holds true with $\bar m_\om$.
\end{theorem}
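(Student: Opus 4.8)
The plan is to derive the lattice LCLT from the random complex RPF theorem (Theorem \ref{RPF}) by the classical Fourier-analytic scheme, adapted to the random setting along the lines of \cite[Ch. 7]{HK}. First I would introduce the random transfer operator $\cL_\om^{it}$ acting (floor-wise normalized) on the cone of functions on $\Del_\om$, twisted by $e^{it\vf_\om}$, so that for $t$ in a neighbourhood of $0$ the RPF theorem yields a random triple $(\la_\om(it),h_\om^{it},\nu_\om^{it})$ with $\la_\om(0)=1$, $h_\om^0=h_\om$, and exponential contraction $\|(\la_\om^{(n)}(it))^{-1}(\cL_\om^{it})^n g - \nu_\om^{it}(g)h_{\sig^n\om}^{it}\|\leq C\rho^n\|g\|$, where $\la_\om^{(n)}(it)=\prod_{j=0}^{n-1}\la_{\sig^j\om}(it)$. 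Writing $\Pi_n(\om,t)=\la_\om^{(n)}(it)$ one gets the basic identity $\mu_\om(e^{itS_n^\om\vf})=\Pi_n(\om,t)\,\nu_{\sig^n\om}^{it}(h_\om^{it}/h_\om)+O(\rho^n)$, uniformly for $t$ near $0$. The standard second-order expansion of $t\mapsto\log\la_\om(it)$ — using joint measurability and analyticity in $t$ of the RPF data, plus the variance formula of the CLT theorem above — gives $\Pi_n(\om,t)=\exp\big(it\,\mu_\om(S_n^\om\vf) - \tfrac12 t^2\,\text{Var}_{\mu_\om}(S_n^\om\vf)(1+o(1)) + n o(t^2)\big)$ as $t\to 0$ uniformly in $n$, which after rescaling $t=\tau/\sqrt n$ produces the Gaussian $e^{-\tau^2\Sig^2/2}$.

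Next I would set up the Fourier inversion on the lattice. Since $S_n^\om\vf\in h\bbZ$, for a compactly supported continuous $g$ (or an interval indicator) one writes, with $a_n=\mu_\om(S_n^\om\vf)$,
\[
\sqrt{2\pi n}\,\Sig\int g(S_n^\om\vf(x)-a_n-kh)\,d\mu_\om(x)
= \frac{\sqrt{2\pi n}\,\Sig\, h}{2\pi}\int_{-\pi/h}^{\pi/h} e^{-it(kh)}\,\widehat{g}_n(t)\,\mu_\om\!\big(e^{it(S_n^\om\vf-a_n)}\big)\,dt,
\]
where $\widehat g_n(t)=\sum_{m\in\bbZ}g(mh)e^{-it(mh)}$ is the (periodized) Fourier series of the sampled $g$; note $\widehat g_n$ does not depend on $n$, call it $\widehat g$. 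I would split the integral into $|t|\le\del$, $\del\le|t|\le\pi/h$ after a change of variables $t=\tau/\sqrt n$ on the central piece. On the central piece the expansion from the previous paragraph, dominated convergence, and $\widehat g(\tau/\sqrt n)\to\widehat g(0)=\sum_m g(mh)$ give, uniformly in $k$, the limit $\frac{\Sig h}{\sqrt{2\pi}}\int_{\bbR}e^{-i\tau k h/\sqrt n}e^{-\tau^2\Sig^2/2}\,d\tau \cdot\sum_m g(mh)$; evaluating the Gaussian integral yields exactly $e^{-(kh)^2/(2n\Sig^2)}\sum_{m\in\bbZ}g(mh)$ (the $k$-dependence survives inside the Gaussian and is handled by the uniform bound $|e^{-i\tau kh/\sqrt n}-1|$ together with integrability). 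On the outer piece $\del\le|t|\le\pi/h$ I would use hypothesis (\ref{A3'}): $\sqrt n\sup_{t\in J}|\mu_\om(e^{itS_n^\om\vf})|\to 0$ on the compact $J=\{\del\le|t|\le\pi/h\}$, so that $\sqrt n$ times the outer integral tends to $0$; the factor $\Sig h/(2\pi)$ times $\|\widehat g\|_\infty\cdot|J|$ is bounded, and this handles the tail.

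The main obstacle will be making the asymptotics of $\Pi_n(\om,t)$ uniform in $n$ for $t$ in a fixed neighbourhood of $0$, rather than only pointwise for each $t\to 0$. In the random setting $\la_\om(it)$ depends on $\om$, so one cannot literally iterate a single analytic function; one needs quantitative control (holding $\bbP$-a.s., with $\om$-dependent constants) on the first two $t$-derivatives of $\log\la_{\sig^j\om}(it)$ at $t=0$ and a uniform (in $j$, hence in $\om$ through the a.e. statement) modulus of continuity of the second derivative near $0$, so that the Cesàro-type sums $\sum_{j=0}^{n-1}\partial_t^2\log\la_{\sig^j\om}(it)|_{t=0}$ are asymptotic to $n\Sig^2$ with an error $n\,o(1)$. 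This is where the cone-contraction estimates of the RPF theorem (uniform bounds on the diameter of the image cone, hence on the Lipschitz dependence of the leading eigendata on the parameter $t$) are essential; I would extract these exactly as in \cite[Ch. 7]{HK}, where the analogous lattice/non-lattice LCLTs for random expanding maps are proved, the only new input being that the RPF theorem here is the one established on the random tower. The remaining steps — the Fourier inversion, splitting, dominated convergence, and the tail bound via (\ref{A3'}) — are routine. Finally, the assertion for $\bar m_\om$ in place of $\mu_\om$ follows verbatim, replacing $\int\cdot\,d\mu_\om$ by $\int\cdot\,(h_\om^{-1})\,d\mu_\om$ up to the bounded density $h_\om\in[c_0,c_1]$ (which only affects the test function $g$ through a bounded reweighting absorbed into the cone estimates) and using (\ref{A3'}) with $\bar m_\om$; the variance replacement is legitimate by (\ref{VarDif}) from Theorem \ref{BE THM}, which shows $v_{\om,n}$ and $\Sig_{\om,n}^2$ differ by a bounded amount, so both are asymptotic to $n\Sig^2$.
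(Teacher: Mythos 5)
Your proposal is essentially the same approach as the paper's: the paper proves this LCLT by invoking the general Fourier-analytic LCLT machinery of \cite[Theorem 2.2.3]{HK}, feeding into it the RPF identity $\mu_\om(e^{itS_n^\om\varphi})=e^{\Pi_{\om,n}(it)}\cdot(\text{bounded factor})+O(\del^n)$ from Theorem~\ref{RPF} (cf.\ (\ref{CharFunc1}) and (\ref{Fib CLT rel 2})), the quadratic estimate $|e^{\Pi_{\om,n}(it)}|\leq c_1e^{-c_2nt^2}$ near $0$, and condition (\ref{A3'}) to kill the tail of the inversion integral on $[-\pi/h,\pi/h]$; you spell out precisely the Fourier inversion, the central/tail split, and the second-order expansion of $\log\la_\om(it)$ that live inside that black box, and you correctly identify the uniform-in-$n$ control on $\Pi_{\om,n}$ as the technical crux handled by the cone-contraction estimates. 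One small caution: your periodized Fourier coefficient $\widehat g_n(t)=\sum_m g(mh)e^{-itmh}$ is independent of $n$ only when the centering $a_n=\mu_\om(S_n^\om\varphi)$ lies in $h\bbZ$; otherwise the sampled points are $mh-a_n$ and the sum does depend on $n$, a point which should be tracked (or one should note that the shift by $a_n\bmod h$ is absorbed into the error term, as in \cite{HK}).
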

We refer the readers' to Section \ref{SecVer} for a discussion about the verification of  conditions (\ref{A3}) and (\ref{A3'}).

\subsubsection{Large and moderate deviations principles}
\begin{theorem}[A moderate deviations principle]\label{MD}
Let Assumptions  \ref{Ass Ap}, \ref{Ass Exp} and \ref{AsCov} hold and 
suppose that $\Sig^2>0$.
Let $\ka_\om$ be either $\mu_\om$ or $\bar m_\om$. Let $a_n$ be a sequence of positive numbers so that 
\[
\lim_{n\to\infty}\frac{a_n}{\sqrt n}=\infty\, \text{ and }\, \lim_{n\to\infty}\frac{a_n}{n}=0
\]
and set $\ve_n=n/a_n^2$. 
In both cases we set $W_n=W_n^\om=\big(S_n^\om\varphi-\ka_\om(S_n^\om\varphi)\big)/a_n$.
Then for $\bbP$-a.e. $\om$, for any Borel measurable set $\Gam\subset\bbR$ we have
\[
-\inf_{x\in\Gamma^o}I_0(x)\leq \liminf_{n\to\infty}\ve_n\ln\ka_\om(W_n^\om\in\Gamma)\leq \limsup_{n\to\infty}\ve_n\ln\ka_\om(W_n^\om\in\Gamma)\leq-\inf_{x\in\bar\Gamma}I_0(x)
\]
where $I_0(x)=\frac12 x^2/\Sig^2$, $\Gamma^o$ is the interior of $\Gamma$ and $\bar\Gam$ is its closure.
\end{theorem}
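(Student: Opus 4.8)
\textbf{Proof proposal for the moderate deviations principle (Theorem \ref{MD}).}

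The plan is to derive the MDP from an asymptotic estimate for the logarithmic moment generating functions of $S_n^\om\varphi$ on scales appropriate to the sequence $a_n$, together with a Gärtner--Ellis type argument. The key input is the random complex RPF theorem (Theorem \ref{RPF}) applied to the family of twisted transfer operators $\cL_\om^{z\varphi}$ obtained by perturbing the normalized transfer operators on the tower by $e^{z\varphi_\om}$; by the conic contraction estimates established in Section \ref{Sec4}, for $z$ in a complex neighborhood of $0$ there exist a random leading eigenvalue $\la_\om(z)$, eigenfunction $h_\om^{(z)}$ and conjugate functional, all depending analytically on $z$, so that $\ka_\om(e^{zS_n^\om\varphi})=\La_n^\om(z)\big(1+o(1)\big)$ uniformly, where $\La_n^\om(z)=\prod_{j=0}^{n-1}\la_{\sig^j\om}(z)$. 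Writing $\Pi_n^\om(z)=\ln\La_n^\om(z)$, the pressure-type function $\Pi_n^\om$ is analytic near $0$ with $\frac1n\Pi_n^\om(0)=0$, $\frac1n(\Pi_n^\om)'(0)\to 0$ (after centering by $\ka_\om(S_n^\om\varphi)$ this derivative vanishes) and $\frac1n(\Pi_n^\om)''(0)=\frac1n\text{Var}_{\ka_\om}(S_n^\om\varphi)+o(1)\to\Sig^2$ for $\bbP$-a.e. $\om$. These facts are already packaged into the proofs of Theorems \ref{BE THM}--\ref{LLT2} and will be reused here essentially verbatim.

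Next I would set $\la_n=\la/a_n$ for a fixed real parameter $\la$ (so $\la_n\to 0$ since $a_n\to\infty$) and Taylor-expand: $\ve_n\ln\ka_\om(e^{\la_n(S_n^\om\varphi-\ka_\om(S_n^\om\varphi))})=\ve_n\big(\Pi_n^\om(\la_n)-\la_n\ka_\om(S_n^\om\varphi)\big)+o(\ve_n)$. Since $\ve_n=n/a_n^2$ and $\Pi_n^\om(\la_n)-\la_n\ka_\om(S_n^\om\varphi)=\tfrac12\la_n^2(\Pi_n^\om)''(0)+O(n\la_n^3)=\tfrac12(\la^2/a_n^2)\big(n\Sig^2+o(n)\big)+O(n/a_n^3)$, using $a_n/\sqrt n\to\infty$ and $a_n/n\to 0$ the cubic remainder times $\ve_n$ is $O(n^2/a_n^5)=o(1)$, and one obtains $\lim_{n\to\infty}\ve_n\ln\ka_\om(e^{\la W_n^\om})=\tfrac12\la^2\Sig^2=:\Psi(\la)$ for every $\la\in\bbR$, $\bbP$-a.e. $\om$. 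The function $\Psi$ is finite, differentiable and steep, its Legendre transform is exactly $I_0(x)=x^2/(2\Sig^2)$, so the Gärtner--Ellis theorem (in the form valid for arbitrary scalings $\ve_n\downarrow 0$, e.g. Dembo--Zeitouni) yields the asserted full large deviations principle with rate $I_0$ at speed $\ve_n^{-1}$. The case $\ka_\om=\bar m_\om$ is identical once one invokes the comparison of the two reference measures together with the variance estimate (\ref{VarDif}); note that changing the initial density only multiplies $\ka_\om(e^{zS_n^\om\varphi})$ by a factor bounded above and below uniformly (since $h_\om$ and $d\bar m_\om/d\mu_\om$ are bounded and bounded away from $0$), hence contributes $O(\ve_n)=o(1)$ to $\ve_n\ln(\cdot)$.

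The step I expect to be the main obstacle is making the Taylor expansion of $\Pi_n^\om$ uniform in $n$ with explicit control on the cubic (and higher) remainder: one needs that the third derivatives $\frac1n(\Pi_n^\om)'''(z)$ are bounded uniformly in $n$ and in $z$ ranging over a fixed real neighborhood of $0$, for $\bbP$-a.e. $\om$. This requires that the analyticity radius of $z\mapsto\la_{\sig^j\om}(z)$ and the bounds on $|\la_{\sig^j\om}(z)|$ and $|\ln\la_{\sig^j\om}(z)|$ furnished by the complex conic perturbation theory of Rugh \cite{Rug} and the random extension in \cite{HK} are uniform in $j$ (equivalently, almost surely bounded), which in turn rests on the uniform Lasota--Yorke inequality and the uniform cone-contraction estimates proved in Section \ref{Sec4}; these are exactly the estimates already used to prove Theorem \ref{BE THM}, so the work is in quoting them in the correct quantitative form rather than in new analysis. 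A secondary technical point is that the convergences $\frac1n(\Pi_n^\om)''(0)\to\Sig^2$ hold only almost surely and only in the limit, so one must be slightly careful to absorb the $o(n)$ error into the final $o(1)$ after multiplying by $\ve_n$ — this is automatic because $\ve_n\cdot o(n)=o(n^2/a_n^2)\cdot(1/n)\cdot n=o(1)$ follows from $a_n/\sqrt n\to\infty$.
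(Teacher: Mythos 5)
Your overall plan coincides with the paper's: invoke the RPF theorem and the uniform exponential convergence \eqref{Exponential convergence} to write $\ln\ka_\om(e^{z(S_n^\om\varphi-\mu_\om(S_n^\om\varphi))})=\Pi_{\om,n}(z)+O(1)$ on a fixed complex disc, use \eqref{C1}/\eqref{I.1} to control $\Pi_{\om,n}''(0)$, and feed a second-order Taylor expansion of $\Pi_{\om,n}$ into the G\"artner--Ellis theorem. The paper's proof says essentially the same thing (quoting \cite[Theorem 2.8]{HafSDS}), and your discussion of the required uniformity in $n$ and $\om$ of the analyticity radius and of the derivative bounds is the right thing to worry about. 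The treatment of $\bar m_\om$ via the $O(1)$ multiplicative discrepancy is also consistent with what the paper does.

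However, there is a concrete error in the scaling of the twist parameter that makes your limit computation fail. For the G\"artner--Ellis theorem at speed $b_n=1/\ve_n=a_n^2/n$ you must evaluate
\[
\Lambda(\la)=\lim_{n\to\infty}\ve_n\ln\ka_\om\!\left(e^{\la W_n^\om/\ve_n}\right),
\]
and since $\la W_n^\om/\ve_n=\frac{\la a_n}{n}\bigl(S_n^\om\varphi-\ka_\om(S_n^\om\varphi)\bigr)$, the correct argument of $\Pi_{\om,n}$ is $\zeta_n=\la a_n/n$, not $\la_n=\la/a_n$ as you wrote. With your choice one gets $\ve_n\cdot\tfrac12\la_n^2\,\Pi_{\om,n}''(0)=\tfrac12\la^2\Sig^2\cdot n^2/a_n^4\,(1+o(1))$, and since $a_n/\sqrt n\to\infty$ this tends to $0$, not to $\tfrac12\la^2\Sig^2$; your asserted limit $\Psi(\la)=\tfrac12\la^2\Sig^2$ therefore does not follow from your expansion. (The quantity you actually compute is $\ve_n\ln\ka_\om(e^{\la W_n})$, which is the CLT scale and indeed vanishes.) With the corrected scaling one gets $\ve_n\cdot\tfrac12\zeta_n^2\,\Pi_{\om,n}''(0)=\tfrac12\la^2\Sig^2(1+o(1))$, and the cubic remainder contributes $\ve_n\cdot O(n\zeta_n^3)=O(a_n/n)\to 0$, so the conclusion holds. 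This is a fixable slip, but as written the central limit in your G\"artner--Ellis step is wrong.
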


We also get the following local large deviations principle
\begin{theorem}[Local large deviations principle]\label{LD}
Let Assumptions  \ref{Ass Ap}, \ref{Ass Exp} and \ref{AsCov} hold and 
suppose that $\Sig^2>0$.
Let $\ka_\om$ be either $\mu_\om$ or $\bar m_\om$. 
In both cases we set $A_n=A_n^\om=(S_n^\om\varphi-\ka_\om(S_n^\om\varphi))/n$. Then there exists a constant $\del>0$ so that $\bbP$-a.s.  for any Borel measurable set $\Gam\subset[-\del,\del]$ we have
\[
-\inf_{x\in\Gamma^o}I(x)\leq \liminf_{n\to\infty}\frac1n\ln\ka_\om(A_n^\om\in\Gamma)\leq \limsup_{n\to\infty}\frac1n\ln\ka_\om(A_n^\om\in\Gamma)\leq-\inf_{x\in\bar\Gamma}I(x)
\]
where $I$ is the Fenchel-Legendre transform of the average pressure function $\cP(t)=\int \ln \la_\om(t)dP(\om)$. Moreover, for every $\ve>0$ small enough
\[
\lim_{n\to\infty}\frac 1n\ln \ka_\om(S_n^\om \varphi-\ka_\om(S_n^\om \varphi)\geq \ve n)=-I(\ve).
\]
\end{theorem}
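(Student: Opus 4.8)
The plan is to run a quenched G\"artner--Ellis argument built on the random complex Ruelle--Perron--Frobenius theorem, Theorem~\ref{RPF}; since the assertion is \emph{local}, it suffices to control the logarithmic moment generating function for the twisting parameter $t$ in a fixed real neighbourhood of $0$.

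\textbf{Step 1 (spectral input and the pressure function).} Applying Theorem~\ref{RPF} to the twisted transfer operators $\cL_\om^t g=\cL_\om(e^{t\varphi_\om}g)$ of $F_\om:(\Del_\om,m_\om)\to(\Del_{\sig\om},m_{\sig\om})$ yields, for all $t$ in a fixed complex neighbourhood $U$ of $0$, measurable families of leading eigenvalues $\la_\om(t)$, eigenfunctions and dual eigenfunctionals, with $t\mapsto\la_\om(t)$ analytic on $U$, $\la_\om(0)=1$, and all of these objects as well as the conic spectral gap bounded uniformly in $\om$. Pairing the iterated twisted operator with the density $h_\om$ of Theorem~\ref{Thm 2.5 1} (resp.\ with $m_\om$) and using the uniform gap gives, for both $\ka_\om\in\{\mu_\om,\bar m_\om\}$ and real $t\in U$,
\[
\frac1n\ln\ka_\om\big(e^{tS_n^\om\varphi}\big)=\frac1n\sum_{j=0}^{n-1}\ln\la_{\sig^j\om}(t)+o(1),
\]
the error being uniform for $t$ in compact subsets of $U\cap\bbR$. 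By the Birkhoff ergodic theorem the sum converges $\bbP$-a.s.\ to $\cP(t)=\int\ln\la_\om(t)\,dP(\om)$ for each fixed real $t$; since $\{n^{-1}\sum_{j<n}\ln\la_{\sig^j\om}(\cdot)\}_n$ is a normal family of analytic functions on $U$, Vitali's theorem upgrades this to a single full-measure set of $\om$ on which $n^{-1}\ln\ka_\om(e^{tS_n^\om\varphi})\to\cP(t)$ locally uniformly on $U\cap\bbR$, together with all $t$-derivatives. In particular $n^{-1}\ka_\om(S_n^\om\varphi)$, being the derivative at $t=0$ of $n^{-1}\ln\ka_\om(e^{tS_n^\om\varphi})$, converges to $\cP'(0)$. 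Subtracting the constant $\cP'(0)$ from every $\varphi_\om$ changes $\cP$ only by the linear term $t\cP'(0)$ and shifts $S_n^\om\varphi$ by a deterministic constant, hence changes neither the events $\{S_n^\om\varphi-\ka_\om(S_n^\om\varphi)\geq\,\cdot\,\}$ nor the rate function in the statement; so we may assume $\cP'(0)=0$. Then $\cP$ is finite, real-analytic, with $\cP(0)=0$ and $\cP''(0)=\Sig^2>0$, hence strictly convex on some $(-\del_0,\del_0)$, and $I=\cP^*$ is finite, smooth, strictly convex and nonnegative on an interval $(-\del,\del)$, with unique minimum $I(0)=0$, strictly increasing on $[0,\del)$.

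\textbf{Step 2 (upper bound).} For $x>0$ and $t\in(0,\del_0)$, the exponential Chebyshev inequality gives $\ka_\om(A_n^\om\geq x)\leq e^{-tnx}e^{-t\ka_\om(S_n^\om\varphi)}\ka_\om(e^{tS_n^\om\varphi})$, so $\limsup_n n^{-1}\ln\ka_\om(A_n^\om\geq x)\leq-tx+\cP(t)$; optimising over $t$ (for $x<\del$ the supremum is attained in $(0,\del_0)$) yields $\limsup_n n^{-1}\ln\ka_\om(A_n^\om\geq x)\leq-I(x)$, and symmetrically for $x<0$ with $t<0$. Splitting a closed $\Gam\subset[-\del,\del]$ at $0$ and using the monotonicity of $I$ on each side gives $\limsup_n n^{-1}\ln\ka_\om(A_n^\om\in\Gam)\leq-\inf_{x\in\Gam}I(x)$.

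\textbf{Step 3 (lower bound via tilting).} Fix $x_0$ in the interior of $\Gam$ with $|x_0|<\del$; by strict convexity and $\cP'(0)=0$ there is $t_0\in(-\del_0,\del_0)$ with $\cP'(t_0)=x_0$. Let $\tilde\ka_\om^n$ be the tilted probability $d\tilde\ka_\om^n=\ka_\om(e^{t_0S_n^\om\varphi})^{-1}e^{t_0S_n^\om\varphi}d\ka_\om$. From Step~1 (convergence of derivatives), the $\tilde\ka_\om^n$-mean of $A_n^\om$ tends to $\cP'(t_0)=x_0$, while its $\tilde\ka_\om^n$-variance equals $n^{-2}$ times the second $t$-derivative at $t_0$ of $\ln\ka_\om(e^{tS_n^\om\varphi})$, which is $O(n)$, so the variance is $O(1/n)\to0$ and $\tilde\ka_\om^n(|A_n^\om-x_0|<\eta)\to1$ for every $\eta>0$. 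Writing
\[
\ka_\om(A_n^\om\in\Gam)\geq\int_{\{|A_n^\om-x_0|<\eta\}}e^{-t_0S_n^\om\varphi}\,\ka_\om(e^{t_0S_n^\om\varphi})\,d\tilde\ka_\om^n
\]
and bounding $e^{-t_0S_n^\om\varphi}$ from below on the event (using $|S_n^\om\varphi-nx_0-\ka_\om(S_n^\om\varphi)|<n\eta$ and the sign of $t_0$) gives
\[
\frac1n\ln\ka_\om(A_n^\om\in\Gam)\geq -t_0x_0-|t_0|\eta-\frac{t_0}{n}\ka_\om(S_n^\om\varphi)+\frac1n\ln\ka_\om(e^{t_0S_n^\om\varphi})+\frac1n\ln\tilde\ka_\om^n(|A_n^\om-x_0|<\eta).
\]
Letting $n\to\infty$ and then $\eta\to0$ yields $\liminf_n n^{-1}\ln\ka_\om(A_n^\om\in\Gam)\geq-t_0x_0+\cP(t_0)=-I(x_0)$; taking the supremum over $x_0\in\Gam^o$ (the point $0$ being trivial since $I(0)=0$ and $A_n^\om\to0$ $\ka_\om$-a.s.\ by the ergodic theorem) gives the lower bound. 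Steps~2--3 together are the local LDP.

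\textbf{Step 4 (precise asymptotics) and the main difficulty.} For $\ve\in(0,\del)$, the lower bound applied to the open set $(\ve,\del)$ and continuity of $I$ give $\liminf_n n^{-1}\ln\ka_\om(S_n^\om\varphi-\ka_\om(S_n^\om\varphi)\geq\ve n)\geq-I(\ve)$; the upper bound on $[\ve,\del]$ gives $\limsup_n n^{-1}\ln\ka_\om(A_n^\om\in[\ve,\del])\leq-I(\ve)$ by monotonicity, while for $\del$ small enough the Chebyshev estimate gives $\limsup_n n^{-1}\ln\ka_\om(A_n^\om>\del)\leq-\sup_{0<t<\del_0}(t\del-\cP(t))=-I(\del)<-I(\ve)$, so the tail beyond $\del$ is negligible and $\lim_n n^{-1}\ln\ka_\om(S_n^\om\varphi-\ka_\om(S_n^\om\varphi)\geq\ve n)=-I(\ve)$. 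The two cases $\ka_\om=\mu_\om$ and $\ka_\om=\bar m_\om$ are handled identically, using in addition for $\bar m_\om$ that $\bar m_\om(S_n^\om\varphi)-\mu_\om(S_n^\om\varphi)$ is bounded (by (\ref{ExpCor}) and \cite{EagHaf}), so the centering, and hence the rate function, is unchanged. I expect the genuinely non-routine points to be: (a) extracting from Theorem~\ref{RPF} the per-fibre estimates with multiplicative constants and analyticity \emph{uniform in $\om$}, and then, via Vitali, converting the Birkhoff convergence of $n^{-1}\ln\ka_\om(e^{tS_n^\om\varphi})$ from ``a.s.\ for each fixed $t$'' to ``a.s.\ simultaneously for all $t$ near $0$, locally uniformly and with derivatives'', on which the whole G\"artner--Ellis scheme and the identification of $I$ rest; and (b) the persistence of the uniform conic spectral gap of $\la_\om(t)^{-1}\cL_\om^t$ under the complex perturbation $t\mapsto\cL_\om^t$, supplied by the theory of \cite{Rug} inside Theorem~\ref{RPF}, which underlies the convergence of the first two $t$-derivatives used in Step~3. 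The remaining bookkeeping follows \cite[Ch.~7]{HK} and \cite{CP}.
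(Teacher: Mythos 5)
Your proof is correct and takes essentially the same route as the paper: both arguments rest on Theorem \ref{RPF} together with (\ref{Exponential convergence}) to establish $\ln\ka_\om\big(e^{zS_n^\om\varphi}\big)=\sum_{j=0}^{n-1}\ln\la_{\sig^j\om}(z)+O(1)$ uniformly for $z$ in a complex disc, identify the pressure $\cP$ by Birkhoff's ergodic theorem, and conclude via the G\"artner--Ellis scheme, with the paper deferring the Chebyshev/tilting bookkeeping to \cite{HafSDS} whereas you supply it. You also make explicit the Vitali/normal-families step that upgrades Birkhoff convergence at each fixed $t$ to a.s.\ locally uniform convergence of $n^{-1}\ln\ka_\om(e^{tS_n^\om\varphi})$ and its derivatives, which the paper leaves implicit; the only caveat is that your ``WLOG $\cP'(0)=0$'' reduction tacitly reads the $I$ of the statement as the Legendre transform of the $\mu_\om$-centred pressure $t\mapsto\cP(t)-t\cP'(0)$ rather than of $\int\ln\la_\om(t)\,dP(\om)$ itself, which is indeed what the paper intends (its displayed identity centres $S_n^\om\varphi$ by $\mu_\om(S_n^\om\varphi)$ before invoking $\la_\om$) but is worth stating rather than asserting that the rate function is literally unchanged.
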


\section{Applications}\label{SecApp}
\subsection{limit theorems for non-uniformly random expanding systems}
We consider here a direct random generalization of the model considered by Melbourne and Nicol \cite{MN}.  
Suppose  there are constants $\la>1$,  $\eta\in(0,1)$, $C\geq1$, $c_1,c_2,c_3>0$ so that 
\vskip0.2cm

(i) $M_\om=(M_\om,\rho_\om)$ is a bounded locally compact metric space and $f_{\om}^{R_{\om,j}}$ is a measurable bijection between $\Lambda_{\om,j}$ and $\Del_{\sig^{R_{\om,j}}\om,0}$.
\vskip0.1cm
(ii) $\rho_{\sig^{R_{\om,j}}\om}(f_\om x^{R_{\om,j}},f_\om y^{R_{\om,j}})\geq\la \rho_\om(x,y)$ for all $j$ and  $x,y\in\Del_{\om,j}$;
\vskip0.1cm
(iii) $\rho_{\sig^\ell\om}(f^\ell_\om x,f^\ell_\om y)\leq C\rho_{\sig^{R_{\om,j}}\om}(f_\om^{R_{\om,j}}x,f_\om^{R_{\om,j}}y)$ for all $j$, $x,y\in\Lambda_{\om,j}$ and $\ell<R_{\om,j}$;
\vskip0.1cm
(iv) The functions $g_{\om,j}=\frac{d(f_\om^{R_\om,j})_*(m_\om|\Lambda_{\om,j})}{dm_\om|\Del_{\sig^{R_{\om,j}}\om,0}}$ satisfy
\[
\left|\log g_{\om,j}(x)-\log g_{\om,j}(y)\right|\leq C\rho_\om(x,y)^\eta
\]
for any $x,y\in\Del_{\om,0}$;
\vskip0.1cm
(v) For $\bbP$ a.e. $\om$ we have $m_\om(R_{\om}\geq n)\leq c_1e^{-c_2n}$ for every $n$;
\vskip0.1cm
(vi) There are $N_0$ and $t_1,t_2,...,t_{N_0}\in\bbN$ such that $\gcd\{t_i\}=1$ and $\bbP$-a.s. 
$m_\om(R_{\om}=t_i)>0$;  Moreover, $R_\om$ is a stopping time, namely the map $(\om,x)\to R_{\om}(x)$ is measurable and if $R_{\om}(x)=n$ then also $R_{\om'}(x)=n$, where $\om'$ is a sequence whose first $n$ coordinates are the same as $\om$'s;
\vskip0.1cm
\vskip0.2cm 

The first four assumptions are straight forward generalizations of classical deterministic assumptions, and they  mean that the maps $f_\om$ are a random family of non-uniformly distance expanding maps, while  the sixth assumption comes from \cite{BBD} (see also \cite{ABR} and \cite{Du}). Under these assumptions, the map $\pi_\om:\Del_\om\to M_\om$ given by $\pi_\om(x,\ell)=f_{\sig^{-\ell}\om}^{\ell}x$ is a Holder continuous bijection on its image.

We consider now a uniformly bounded family of H\"older continuous functions $\varphi_\om:M_\om\to\bbR$ (uniformly in $\om$) and define 
\[
S_n^\om \varphi=\sum_{j=0}^{n-1}\varphi_{\sig^j\om}\circ f_\om^n.
\]
For a fixed $\om$ we will view $S_n^\om \varphi$ as a sequence of random variables with respect to either $(\pi_\om)_*\mu_\om$, which is an equivariant family of measures equivalent to the restriction of the reference measures $m_\om$ to the image of $\pi_\om$ (``sample stationary measures" in the terminology of \cite{BBD}) or the measure $(\pi_\om)_*\textbf{m}_\om$ (which is also equivalent to the latter restriction, and coincides with $m_\om$ on the random base $\Del_{\om,0}$).
In order for our results in Section \ref{Sec1} to hold we need  Assumption \ref{AsCov} to hold true.  Using Remark \ref{RemApp}, we have the following.
\begin{proposition}\label{PropApp}
For the maps describe above, Assumption \ref{AsCov} holds true on the random tower if 
one of the following two conditions hold true.

(i) For any $i$ we have 
\[
\text{ess-sup }\sup_{x\in\Del_{\om,i}}|J f_\om^{R_{\om,i}}x|<\infty
\]
(equivalently the Jacobian of $f_\om^{R_\om}$ restricted to the atom with the $i$-th largest measure is uniformly bounded in $\om$).

(ii) There is  a constant $C>0$ so that, $\bbP$-a.s. we have $|Jf_\om|\leq C$. Moreover, for all $n$ large enough there is a constant $k_n$ so that $\bbP$-a.s. the set $\{i: n\leq R_{\om,i}\leq n+k_n\}$ is non-empty.
\end{proposition}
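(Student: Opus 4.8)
The plan is to reduce Proposition \ref{PropApp} directly to Remark \ref{RemApp}, which already lists two sufficient conditions for Assumption \ref{AsCov} on an abstract random Young tower. So the only real task is to translate the tower-level hypotheses of Remark \ref{RemApp}(i) and (ii) into the map-level hypotheses (i) and (ii) of the proposition, using the specific structure of the tower built over the non-uniformly expanding family $f_\om$ (assumptions (i)--(vi) of Section \ref{SecApp}). First I would recall the bookkeeping: atoms of the length-one cylinder partition $\cQ_\om$ are the sets $\Del_{\om,\ell,i}=\Lam_{\sig^{-\ell}\om,i}\times\{\ell\}$ with $R_{\sig^{-\ell}\om,i}>\ell$, and under the identification of $\Del_{\om,\ell,i}$ with $\Lam_{\sig^{-\ell}\om,i}$ we have $m_\om(\Del_{\om,\ell,i})=m_{\sig^{-\ell}\om}(\Lam_{\sig^{-\ell}\om,i})$. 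By bounded distortion (the assumption \eqref{Distortion0}, which follows here from hypothesis (iv)), for a point $x=(x_0,\ell)\in\Del_{\om,\ell,i}$ the measure $m_{\sig^{-\ell}\om}(\Lam_{\sig^{-\ell}\om,i})$ is comparable, with constants independent of $\om,\ell,i$, to $1/Jf_{\sig^{-\ell}\om}^{R_{\sig^{-\ell}\om,i}}(x_0)$, since $f_\om^{R_{\om,i}}|\Lam_{\om,i}$ is a bijection onto $\Del_{\sig^{R_{\om,i}}\om,0}$ which has unit $m$-mass.

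For part (i): I would use Remark \ref{RemApp}(i). Enumerate the atoms $Q_{\om,1},Q_{\om,2},\dots$ of $\cQ_\om$ in decreasing order of $m_\om$-measure. The uniform convergence of $\sum_i m_\om(Q_{\om,i})$ follows from Assumption \ref{Ass Exp}: indeed $\sum_i m_\om(Q_{\om,i})=m_\om(\Del_\om)=\sum_{\ell\ge0}m_{\sig^{-\ell}\om}(R_{\sig^{-\ell}\om}\ge\ell+1)\le\sum_\ell c_1e^{-c_2\ell}$, and the tail of the series is dominated by $\sum_{\ell\ge L}c_1e^{-c_2\ell}$, uniformly in $\om$. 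It then remains to verify $\mathrm{ess\text{-}inf}_\om\, m_\om(Q_{\om,i})>0$ for each fixed $i$, and here is where hypothesis (i) of the proposition enters: since $m_\om(Q_{\om,i})\asymp 1/|Jf^{R}|$ on the corresponding atom, the hypothesis that the Jacobian on the atom of the $i$-th largest measure is uniformly bounded above in $\om$ gives exactly a uniform positive lower bound for $m_\om(Q_{\om,i})$. Feeding this into Remark \ref{RemApp}(i) yields Assumption \ref{AsCov}.

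For part (ii): I would instead invoke Remark \ref{RemApp}(ii). The exponential tail bound \eqref{Exp} certainly implies $m_\om(R_\om\ge\ell)\to0$ uniformly in $\om$. The bound $|Jf_\om|\le C$ combined with the chain rule gives $|Jf_\om^k|\le C^k$ on any atom with return time $k$, hence any atom $\Del_{\om,i}$ with $R_{\om,i}\le\ell$ has $m_\om$-measure at least $C^{-\ell}=:\del_\ell$, a bound depending only on $\ell$. Finally, the assumption that for all large $n$ there is $k_n$ with $\{i:n\le R_{\om,i}\le n+k_n\}$ nonempty $\bbP$-a.s. is precisely the nonemptiness of $\{n<R_\om\le n+k_n\}$ required by Remark \ref{RemApp}(ii) (after a harmless index shift). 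All three conditions of Remark \ref{RemApp}(ii) are thus met, giving Assumption \ref{AsCov} again.

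The only step requiring genuine care — and the one I would expect to be the main obstacle — is the distortion-to-measure comparison $m_{\sig^{-\ell}\om}(\Lam_{\sig^{-\ell}\om,i})\asymp 1/Jf_{\sig^{-\ell}\om}^{R}(x_0)$ with constants uniform in $\om,\ell,i$. One has to check that the Hölder distortion estimate (iv), once summed along the $R_{\om,i}$-fold composition and combined with the expansion (ii)--(iii), produces a bounded distortion constant that is genuinely independent of the atom and of $\om$; this is the standard but slightly delicate ``bounded distortion on Young towers'' argument, and it is what legitimizes replacing measures of atoms by reciprocals of Jacobians in both parts (i) and (ii). Everything else is routine bookkeeping with the uniform exponential tail. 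I would therefore structure the proof as: (1) state and prove the uniform bounded-distortion lemma; (2) derive the measure-vs-Jacobian comparison; (3) dispatch case (i) via Remark \ref{RemApp}(i); (4) dispatch case (ii) via Remark \ref{RemApp}(ii).
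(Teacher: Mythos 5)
Your proposal is correct and follows exactly the route the paper itself takes, namely a direct reduction to Remark \ref{RemApp}; the paper in fact gives no further proof, simply stating ``Using Remark \ref{RemApp}, we have the following.'' Two small notes: the Remark asks for uniform convergence of the $\tilde m_\om$-sum (not the $m_\om$-sum you write), but with $v_\ell=e^{\ve_0\ell}$ and $\ve_0<c_2$ the exponential tail bound gives this immediately as well; and the measure-to-Jacobian comparison you single out as the delicate step is already supplied by Lemma \ref{CorDist} and Remark \ref{Rem}, so no new distortion lemma needs to be proved.
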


\subsection{Limit theorems for random nonuniformly hyperbolic maps}
Let $M$ be  a smooth compact Riemannian manifold and $f\in\text{Diff}^{1+}(M)$ have a transitive partially hyperbolic set $K\subset M$ and a local unstable manifold $D\subset K$. As in \cite{ABR}, let $\cF$ be a sufficiently small $C^1$-ball around $f$. Let $P_0$ be a probability measure on $\cF$ with a compact support $\mathbb B$. Furthermore, let $(\Om_0,\cF_0,P_0)$ be a probability space and $f_{\om_0},\,\om_0\in\Om_0$ be a random $\mathbb B$-valued element. 
We then consider $f_\om=f_{\om_0}$, where $\om=\{\om_n\}\in\Om=\Om_0^\bbZ$. As in  \cite{ABR}, we will also assume that $f_{\om_0}$ is $C^1$-close to $f|_{D}$ on domains $\{D_{\om_0}\}$  of $cu$-nonuniform expansions (see the exact definition after (10) in \cite{ABR}).

We claim that our results hold true  for the above partially hyperbolic maps, together with the physical measures $\mu_\om$ from \cite[Theorem 1.5]{ABR}.
Indeed, we first observe that the random towers constructed there have exponential tails uniformly in $\om$. Moreover, relying on \cite[Propositions 3.3]{ABR} and \cite[Proposition 3.5]{ABR} (which are random versions of \cite[Lemma 4.4]{ADLP}) and arguing as in \cite[Section 7]{ADLP} one can show that, after collapsing along stable manifolds we get a H\"older continuous random conjugacy  with a random Gibbs-Markov-Young map, a model which can be reduced to the random towers considered in this paper (this essentially means that the arguments in \cite{ABR} reduce the problem to random towers so that (\ref{Distortion0}) holds true  for some $\be$ with our separation time and not only with the (smaller) random separation time defined in \cite{ABR}).  We also note that, in view of (76) in \cite{ABR}, the condition that $\{i: \ell\leq R_{\om,i}\leq \ell+k_\ell\}$ is non-empty holds true with $k_\ell=L$ which does not depend on $\ell$. Therefore, as discussed in Remark \ref{RemApp} we get that the conditions in Assumption \ref{AsCov}  are valid. Finally, we note that we indeed get all the limit theorems  for the original maps $f_\om$ from the results on the random tower because  (7) in \cite{ABR} hold true with $\del_{\sig^k\om,k}=C\del^k$ for some $C>0$ and $\del\in(0,1)$ (using that, the reduction from the invertible case to the non-invertible case is done similarly to \cite[Section 4.2.2]{LLT2020}).

\section{Random transfer operators}\label{Sec4}
In this section we  obtain several abstract results on random towers. We  start from results which hold true when the tails decay  sub-exponentially fast, and the exponential rate of decay will only be used in Section \ref{C-sec} when dealing with complex cones. 

In what  follows we will constantly use the following simple result.
\begin{lemma}\label{CorDist}
There exists a constant $Q>0$ so that for all $\om$, $k$ and $x\in\Del_\om$ such that $F_\om^jx\in\Del_{\sig^j\om,0}$ for some $1\leq j\leq k$
we have
\[
Q^{-1}m_\om(C_{\om,k}(x))\leq \frac1{JF_\om^k x}\leq Q m_\om(C_{\om,k}(x)).
\]
\end{lemma}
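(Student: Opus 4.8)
The plan is to establish the distortion bound \eqref{Distortion0} for the full iterate $JF_\om^k$ (not just for a single return), and then compare $1/JF_\om^k x$ with the $m_\om$-measure of the cylinder $C_{\om,k}(x)$ via the change-of-variables formula. First I would note that, by definition of the tower map $F_\om$, along any orbit segment $x, F_\om x, \dots, F_\om^{k-1}x$ the Jacobian $JF_{\sig^j\om}(F_\om^j x)$ equals $1$ except at the ``return'' times $j$ where $F_\om^j x$ lands in a base $\Del_{\sig^j\om,0}$; at those times the Jacobian is the Jacobian of the corresponding return map $f^{R}$. Hence $JF_\om^k x = \prod JF_{\sig^j\om}(F_\om^j x)$ is a product over the (finitely many) return times inside $[0,k)$. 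The hypothesis of the lemma — that $F_\om^j x \in \Del_{\sig^j\om,0}$ for some $1 \le j \le k$ — guarantees at least one genuine return occurs, so the cylinder $C_{\om,k}(x)$ maps (bijectively, by the Gibbs--Markov structure) onto a base set, or more precisely onto a full cylinder of the appropriate remaining depth, and the change of variables applies cleanly.

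Next I would prove a bounded-distortion estimate: there is $Q_0>0$ (depending on $A_1$, $\be$, and the constant $C$ in \eqref{A1}) such that for $x,y$ in the same cylinder $C_{\om,k}$ one has $\left|\log\frac{JF_\om^k x}{JF_\om^k y}\right| \le Q_0$, in fact bounded by $A_1 \sum_{n\ge 0}\be^n = A_1/(1-\be)$ after telescoping. The point is that $\log JF_\om^k x - \log JF_\om^k y = \sum_m \big(\log JF_{\sig^{j_m}\om}(F_\om^{j_m}x) - \log JF_{\sig^{j_m}\om}(F_\om^{j_m}y)\big)$ where $j_m$ runs over the return times; applying \eqref{Distortion0} at each return gives a bound of $A_1\be^{s(\cdot,\cdot)}$ at the $m$-th return, and the separation times at successive returns differ by at least the gaps between them, so these exponents form (after reindexing) a subsequence of $\{0,1,2,\dots\}$, making the sum geometric. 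One must use $\left|\log t\right| \le 2|t-1|$ for $t$ near $1$ to pass between the multiplicative form of \eqref{Distortion0} and the additive telescoping; a standard bootstrap (first get a rough uniform bound, then the sharp one) handles this.

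Given the distortion bound, the conclusion is immediate: by the change-of-variables formula for the bijection $F_\om^k$ restricted to $C_{\om,k}(x)$ onto its image $V$ (a base, or a finite union of cylinders covering a base — in any case a set of $m$-measure bounded below and above by absolute constants, using Assumption \ref{Ass Exp}/\eqref{A1} or the fact that images of returns are full bases of measure $1$), we have
\[
m_{\sig^k\om}(V) = \int_{C_{\om,k}(x)} JF_\om^k \, dm_\om.
\]
By bounded distortion, $JF_\om^k$ varies over $C_{\om,k}(x)$ within a factor $e^{Q_0}$ of its value at $x$, so $JF_\om^k x \cdot m_\om(C_{\om,k}(x))$ is comparable to $m_{\sig^k\om}(V)$, which is itself comparable to a constant; rearranging gives $Q^{-1} m_\om(C_{\om,k}(x)) \le 1/JF_\om^k x \le Q\, m_\om(C_{\om,k}(x))$ with $Q = e^{Q_0}\cdot\max(\sup m(V), 1/\inf m(V))$.

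The main obstacle is the bookkeeping in the distortion estimate: one must carefully track how the separation time $s_\om(x,y)$ on the tower relates to the separation times $s_{\sig^{j_m}\om}(F_\om^{j_m}x, F_\om^{j_m}y)$ at the successive base returns, to confirm that the exponents appearing in the telescoped sum really do dominate $0,1,2,\dots$ (so that the geometric series converges). This is where the precise definition of $C_{\om,n}$ via $\bigvee (F_\om^i)^{-1}\cQ_{\sig^i\om}$ and the footnoted interpretation of $s_\om$ are used — essentially $s_{\sig^{j}\om}(F_\om^{j}x,F_\om^{j}y) = s_\om(x,y) - j$ for $j$ up to the separation time, so consecutive returns give strictly decreasing (hence, reading backwards, a subsequence of the naturals) exponents. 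Everything else is routine change of variables and elementary inequalities.
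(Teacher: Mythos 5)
Your strategy — telescoped distortion for the full iterate $JF_\om^k$, followed by a change of variables that compares $1/JF_\om^k x$ to $m_\om(C_{\om,k}(x))$ through the measure of the image — is the same as the paper's. The distortion telescoping (using \eqref{Distortion0} at successive returns, with separation exponents forming a subsequence of $\{0,1,2,\dots\}$, hence a convergent geometric sum) is fine, and so is the identity $m_{\sig^k\om}(V)=\int_{C_{\om,k}(x)}JF_\om^k\,dm_\om$.

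The gap is the final claim that $m_{\sig^k\om}(V)$ is ``a set of $m$-measure bounded below and above by absolute constants.'' That is only true when the hypothesis holds with $j=k$, i.e.\ $F_\om^kx\in\Del_{\sig^k\om,0}$: then $F_\om^k|_{C_{\om,k}(x)}$ is onto $\Del_{\sig^k\om,0}$, which has measure exactly $1$, and your argument closes. That is precisely the core case the paper treats explicitly. But the lemma's hypothesis allows the last return to occur at a time $j_0<k$, in which case from time $j_0$ the orbit of every point in the cylinder simply climbs the tower without returning, and $F_\om^k|_{C_{\om,k}(x)}$ maps $C_{\om,k}(x)$ bijectively onto $\Lam_{\sig^{j_0}\om,j'}\times\{k-j_0\}$, a \emph{single} base atom lifted to level $k-j_0$ (here $j'$ indexes the atom that $F_\om^{j_0}x$ lands in). Its measure $m_{\sig^{j_0}\om}(\Lam_{\sig^{j_0}\om,j'})$ is not bounded below — base atoms in a Young tower necessarily have measures shrinking to zero — so the step ``which is itself comparable to a constant'' fails, and with it the upper bound $1/JF_\om^kx\leq Q\,m_\om(C_{\om,k}(x))$. (The lower bound survives, since $m_{\sig^{j_0}\om}(\Lam_{\sig^{j_0}\om,j'})\leq 1$.) The paper treats this case by observing $JF_\om^kx=JF_\om^{j_0}x$ and reducing to a shorter cylinder $C_{\om,j_0}$ on which the first case applies; note also that $C_{\om,k}(x)$ records the $\Lam$-atom at the return time $j_0$ (so it actually equals $C_{\om,j_0+1}(x)$, a proper refinement of $C_{\om,j_0}(x)$), which is exactly the source of the factor you lost. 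A correct write-up must engage with this reduction and track that extra factor explicitly rather than asserting that the image measure is bounded below.
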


\begin{proof}
First, iterating (\ref{Distortion}), we get that for some $C_1>0$ and  all $n\geq1$ and $x,y$ which belong to the same $n$-th length cylinder we have
\begin{equation}\label{Distortion}
\left|\frac{JF_\om^n x}{JF_\om^n y}-1\right|\leq C_1\be^{s_{\sig^n\om}(F_\om^n x,F_\om^n y)}.
\end{equation}

Next, in order to prove \eqref{Distortion} let us assume first that $F_\om^kx\in\Del_{\sig^k\om,0}$. Then the map $F_\om^k|_{C_{\om,k}(x)}$ is injective and its image is $\Del_{\sigma^k \om, 0}$. Let $g_k:\Del_{\sig^k\om,0}\to C_{\om,k}(x)$ be the corresponding inverse branch. Then the lemma follows from (\ref{Distortion}) together with the equality
\[
m_\om(C_{\om,k}(x))=\int_{\Del_{\sig^k\om, 0}}Jg_k dm_{\sig^k\om}.
\] 
In the general case, let $j_0\leq k$ be the maximal index so that $F_\om^{j_0}x\in\Del_{\sig^{j_0}\om,0}$. Then
\[
C_{\om,k}(x)=C_{\om,j_0}(x)\,\text{ and }\, JF_\om^kx=JF_\om^{j_0}x
\]
which reduces the problem to the case when $j_0=k$.
\end{proof}
\begin{remark}\label{Rem}
If $F_\om^jx\notin\Del_{\sig^j\om,0}$ for all $1\leq j\leq k$ then $C_{\om,k}(x)=\Del_{\om,\ell,i}=C_{\om,r}(x)$, where $r$ is the first time that $F_\om^r x\in\Del_{\sig^r\om,0}$ and $\Del_{\om,\ell,i}$ is the atom containing $x$. Therefore, 
\[
Q^{-1}m_\om(C_{\om,k}(x))\leq \frac1{JF_\om^{r} x}=\frac1{J f_{\sigma^{-\ell}\om}^{R_{\sigma^{-\ell}\om}}x}\leq Q m_\om(C_{\om,k}(x)) 
\]
where $x=(x_0,\ell)$. We conclude that for any cylinder $C_{\om,k}$ and any point $x=(x_0,\ell)\in C_{\om,k}$ we have
\[
Q^{-1}m_\om(C_{\om,k})\leq \frac1{J (f^R)_{\sig^{-\ell}\om}^s x_0}\leq Q m_\om(C_{\om,k}) 
\]
where $s$ is the number of $j$'s between $1$ and $k$ so that $F_\om^j x\in\Del_{\sig^k\om,0}$.
\end{remark}

\subsection{Random complex transfer operators}
Let $\varphi_\om:\Del_\om\to\bbR$ be a H\"older continuous function with respect to the metric 
\[
d_\om(x,y)=\be^{s_\om(x,y)}
\] 
so that $(\om,x)\to\varphi_\om(x)$ is a measurable map. 
For every $n\geq1$ we consider the random function
\[
S_n^\om\varphi=\sum_{j=0}^{n-1}\varphi_{\sig^j\om}\circ F_\om^j.
\]
Since $F_\om$ is at most countable to one, 
for any complex number $z$ we can define the transfer operator $P_{\om}^{z}$ by 
\[
P_\om^{z}g(x)=\sum_{y: F_\om y=x}\frac1{JF_\om(y)}e^{z\varphi_\om(y)}g(y),
\]
where $g:\Del_\om\to\bbC$ and $x\in\Del_{\sig \om}$. This operator takes a function on $\Del_\om$ and returns a function on $\Del_{\sig\om}$. Let us also consider the iterates of these operators 
\[
P_\om^{z,n}=P_{\sig^{n-1}\om}^{z}\circ\cdots \circ P_{\sig\om}^{z}\circ P_\om^{z}.
\]
Then 
\[
P_\om^{z,n}g(x)=\sum_{y: F_\om^n y=x}\frac1{JF_\om^n(y)}e^{zS_n^\om\varphi(y)}g(y).
\]

\subsubsection{Weighted norm spaces}
Let $(v_\ell)_{\ell=0}^\infty$ be a monotone increasing strictly positive sequence so that for $\bbP$-a.e. $\om\in\Om$,
\begin{equation}\label{A1'}
\sum_{\ell=0}^\infty v_\ell m_{\sig^{-\ell}_\om}(\{x_0: R_{\sig^{-\ell}\om}(x_0)\geq \ell)\leq C_2
\end{equation}
for some $C_2>0$ not depending on $\om$. Later on we will assume the uniform exponential tails assumption (\ref{Exp}), and then we will take $v_\ell=c_1e^{c\ell}$ for some $c_1,c>0$, but for the meanwhile we will obtain our results for general sequences $(v_\ell)$, since we think it is interesting on its own.
We define a norm on functions $g:\Del_\om\to\bbC$ as follows:
\[
\|g\|_\om=\|g\|_s+\|g\|_h
\]
where 
\[
\|g\|_{s}=\sup_\ell v_\ell^{-1}\|g\bbI_{\Del_{\om,\ell}}\|_\infty, \,\|g\|_h=\sup_\ell v_\ell^{-1}\big|g\big|_{\om,\Del_{\om,\ell}}
\]
where for any $A\subset \Del_\om$, 
\begin{equation}\label{g norm def}
|g|_{\om,A}=|g|_{\om,A,\be}=\sup_{ x,y\in A\,\,x\not=y} \frac{|g(x)-g(y)|}{d_\om(x, y)}
\end{equation}
(the dependence on $\beta$ is through the metric $d_\om$).
Note that
\begin{equation}\label{L q g s}
\|g\|_{L^1(m_\om)}\leq C_2\|g\|_s
\end{equation}
for every function g $g$. Indeed,
\begin{eqnarray*}
\|g\|_{L^1(m_\om)}=\sum_{\ell\geq0}\int_{\Del_{\om,\ell}}|g|dm_\om\\\leq \|g\|_s\sum_{\ell}v_\ell m_\om(\Del_{\om,\ell})=\|g\|_s\sum_{\ell=0}^\infty v_\ell m_{\sig^{-\ell}_\om}(x_0: R_{\sig^{-\ell}\om}(x_0)\geq \ell).
\end{eqnarray*}

\subsubsection{A Lasota-Yorke inequality}
We will prove here the following results.

\begin{proposition}\label{Prop LY}
(i) For every $N$ and $\ell$ so that $N\leq \ell$, a function $g:\Del_{\om}\to\bbC$ and $x,y\in\Del_{\sig^N\om,\ell}$ we have 
\begin{equation}\label{LY1.1}
|P_\om^{it,N}g(x)|\leq v_{\ell-N}\|g\|_s
\end{equation}
and 
\begin{equation}\label{LY1.2}
|P_\om^{it,N}g(x)-P_\om^{it,N}g(y)|\leq (\|g\|_h\beta^N+(A|t|+2\be^{-1})\|g\|_s)v_{\ell-N}d_{\sig^N\om}(x,y)
\end{equation}
where $A=(1-\be)^{-1}\text{ess-sup }\sup_{\ell}|\varphi_\om|_{\om,\Del_{\om,\ell}}<\infty$.

(ii) For all $N$ and $\ell$ so that $N>\ell$, a function $g:\Del_{\om,\ell}\to\bbC$ and $x,y\in\Del_{\sig^N\om,\ell}$ we have 
\begin{equation}\label{LY2.1}
|P_\om^{it,N}g(x)| \leq Q\left(\int |g|d m_\om+\be^N\|g\|_h\cdot C_2\right):=R_{N}(g)
\end{equation}
and 
\begin{equation}\label{LY2.2}
|P_\om^{it,N}g(x)-P_\om^{it,N}g(y)|\leq \left(C_1+2\be^{-1}+|t|A\right)R_N(g)d_{\sig^N\om}(x,y)
\end{equation}
where $C_1$ comes from \eqref{Distortion} and $C_2$ comes from (\ref{A1'}).

In particular 
\begin{eqnarray*}
\|P_\om^{it,N}g\|_{\sig^N\om}\\\leq \max\left(\sup_{\ell\geq N}v_{\ell-N}v_\ell^{-1}\left((1+|A|t)\|g\|_s+\beta^N\|g\|_h\right), v_0^{-1}R_N(g)(2+C_1+|t|A)\right).
\end{eqnarray*}
Therefore, for any compact sets $J\subset\bbR$ the operator norms $\|P_\om^{it,N}\|_{\om,\sig^N\om}$ with respect to the norms $\|\cdot\|_\om$ and $\|\cdot\|_{\sig^M\om}$ are uniformly bounded in $\om\in\Om, N\geq1$ and $t\in J$.
\end{proposition}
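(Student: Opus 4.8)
The plan is to prove the two parts separately according to whether the iterate lands a point back at the base within the first $N$ steps or not, and then assemble the operator-norm bound at the end. Throughout I would write $P_\om^{it,N}g(x)=\sum_{y:F_\om^ny=x}(JF_\om^n(y))^{-1}e^{itS_N^\om\varphi(y)}g(y)$ and note that for $x\in\Del_{\sig^N\om,\ell}$ the preimages $y$ lie in $\Del_{\om,\ell',i}$ with $\ell'$ either $\ell-N$ (no return in the first $N$ steps, so $JF_\om^N(y)=1$ and the sum is a single term) or smaller (at least one return). This dichotomy is exactly part (i) versus part (ii).

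For part (i), when $N\le\ell$ and $x\in\Del_{\sig^N\om,\ell}$, the only preimage under $F_\om^N$ is the point obtained by climbing down $N$ floors, so $P_\om^{it,N}g(x)=e^{itS_N^\om\varphi(y)}g(y)$ with $y\in\Del_{\om,\ell-N}$; hence $|P_\om^{it,N}g(x)|\le\|g\bbI_{\Del_{\om,\ell-N}}\|_\infty\le v_{\ell-N}\|g\|_s$, which is \eqref{LY1.1}. For \eqref{LY1.2} I would compare the single preimages $y,y'$ of $x,y$; they satisfy $s_\om(y,y')=s_{\sig^N\om}(x,y')+N$ (climbing down does not break cylinders and adds exactly $N$ to the separation time — this is essentially the footnote remark), so $d_\om(y,y')=\be^N d_{\sig^N\om}(x,y')$. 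Then write $|e^{itS_N^\om\varphi(y)}g(y)-e^{itS_N^\om\varphi(y')}g(y')|\le|g(y)-g(y')|+|g(y')|\,|e^{itS_N^\om\varphi(y)}-e^{itS_N^\om\varphi(y')}|$. The first term is $\le\|g\|_h v_{\ell-N}d_\om(y,y')=\|g\|_h v_{\ell-N}\be^N d_{\sig^N\om}(x,y')$. For the second, $|e^{it\cdot}-e^{it\cdot}|\le|t|\,|S_N^\om\varphi(y)-S_N^\om\varphi(y')|$, and by summing the Hölder bound on $\varphi$ over the $N$ iterates, using that the separation times decrease geometrically along the orbit, one gets $|S_N^\om\varphi(y)-S_N^\om\varphi(y')|\le A\, d_\om(y,y')\le A d_{\sig^N\om}(x,y')v_{\ell-N}$-type control; combined with $|g(y')|\le v_{\ell-N}\|g\|_s$ this yields the $(A|t|+2\be^{-1})\|g\|_s v_{\ell-N}$ contribution (the $2\be^{-1}$ absorbing a harmless constant). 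This gives \eqref{LY1.2}.

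For part (ii), when $N>\ell$, every preimage $y$ of $x\in\Del_{\sig^N\om,\ell}$ returns to a base at least once in the first $N$ steps. Grouping preimages by their cylinder $C_{\om,N}(y)$ and applying Lemma \ref{CorDist} to bound $1/JF_\om^N(y)$ by $Q\,m_\om(C_{\om,N}(y))$, I would estimate $|P_\om^{it,N}g(x)|\le Q\sum_{C}m_\om(C)\sup_C|g|$. Writing $\sup_C|g|\le\inf_C|g|+|g|_{\om,C}\operatorname{diam}_\om(C)$ and noting $\operatorname{diam}_\om(C)\le\be^N$ for an $N$-cylinder that has returned, this becomes $Q\big(\sum_C\int_C|g|\,dm_\om+\be^N\sum_C m_\om(C)|g|_{\om,C}\big)\le Q\big(\int|g|\,dm_\om+\be^N\|g\|_h\sum_\ell v_\ell m_\om(\Del_{\om,\ell})\big)\le Q(\int|g|dm_\om+\be^N\|g\|_h C_2)=R_N(g)$ using \eqref{A1'}; that is \eqref{LY2.1}. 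For \eqref{LY2.2} I would again pair up preimages of $x$ and $y$ cylinder-by-cylinder — each $N$-cylinder of $\Del_{\sig^N\om,\ell}$ contains one preimage of each — and bound the difference of the two transfer sums by three pieces: variation of $g$ within a cylinder, distortion of $1/JF_\om^N$ (controlled by $C_1\be^{s}$ from \eqref{Distortion}), and variation of $e^{itS_N^\om\varphi}$ (controlled by $|t|A$), each multiplied by $\sum_C m_\om(C)\sup_C|g|\le R_N(g)$ and by $d_{\sig^N\om}(x,y)$. This is the step I expect to be the main obstacle: keeping the bookkeeping honest when summing over infinitely many return cylinders while simultaneously tracking distortion, the phase factor, and the weights $v_\ell$, and verifying that the constants collapse exactly into $C_1+2\be^{-1}+|t|A$ requires care (in particular one must use that the separation time between corresponding points in distinct cylinders is controlled by $s_{\sig^N\om}(x,y)$ so that $d_\om$-distances telescope back to $d_{\sig^N\om}(x,y)$).

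Finally, to get the displayed operator-norm bound I would combine the two parts. For the seminorm $\|\cdot\|_s$ of $P_\om^{it,N}g$: on floors $\ell\ge N$ use \eqref{LY1.1} to get $v_\ell^{-1}\|P_\om^{it,N}g\,\bbI_{\Del_{\sig^N\om,\ell}}\|_\infty\le v_{\ell-N}v_\ell^{-1}\|g\|_s$, and on floors $\ell<N$ use \eqref{LY2.1} to get $v_\ell^{-1}R_N(g)\le v_0^{-1}R_N(g)$ since $(v_\ell)$ is increasing; similarly for $\|\cdot\|_h$ from \eqref{LY1.2} and \eqref{LY2.2}. Taking the max over the two regimes and bounding $R_N(g)\le Q(C_2\|g\|_s+\be^NC_2\|g\|_h)\le QC_2\|g\|_\om$ (and $v_{\ell-N}v_\ell^{-1}\le1$) shows $\|P_\om^{it,N}g\|_{\sig^N\om}\le K(1+|t|)\|g\|_\om$ for a constant $K$ depending only on $Q,C_1,C_2,A,\be,v_0$; uniformity over $\om$, $N\ge1$, and $t$ in a compact set $J$ is then immediate because all these constants are uniform. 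This delivers the last assertion of the proposition.
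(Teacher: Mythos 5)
Your approach matches the paper's: a single-preimage direct computation for $N\le\ell$, preimage/cylinder grouping via Lemma~\ref{CorDist} for $N>\ell$, and a three-term triangle inequality (variation of $g$, variation of the phase $e^{itS_N^\om\varphi}$, distortion of $1/JF_\om^N$). However, there is a genuine gap in your treatment of the H\"older estimates \eqref{LY1.2} and \eqref{LY2.2}: you must handle separately the case where $x$ and $y$ lie in \emph{different} atoms of the same floor $\Del_{\sig^N\om,\ell}$, and your write-up implicitly assumes they lie in the same atom. When they lie in different atoms, the separation time is $1$ and hence $d_{\sig^N\om}(x,y)=\be$; the relation $s_\om(y,y')=s_{\sig^N\om}(x,y)+N$ you invoke in part (i) then fails (the preimages on floor $\ell-N$ are also in different atoms, so $d_\om(y,y')=\be$, not $\be^{N+1}$), and in part (ii) the cylinder-by-cylinder pairing of preimages you describe does not exist. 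The remedy is cheap: in that case simply bound $|P_\om^{it,N}g(x)-P_\om^{it,N}g(y)|\le |P_\om^{it,N}g(x)|+|P_\om^{it,N}g(y)|$ by $2v_{\ell-N}\|g\|_s$ in part (i) (resp.\ $2R_N(g)$ in part (ii)) and rewrite the factor $2$ as $2\be^{-1}d_{\sig^N\om}(x,y)$. That is the actual origin of the $2\be^{-1}$ appearing in both displayed constants; your remark that it ``absorbs a harmless constant'' from the $A|t|$ term misidentifies its source, since the telescoping already produces a convergent geometric sum with no $\be^{-1}$ loss. The remaining ingredients of your proposal --- the $\|g\|_h\be^N$ term, the geometric telescoping over the Birkhoff sum giving the $A|t|$ constant, the $Q\,m_\om(C_N)$ bound from Lemma~\ref{CorDist}, the $\int|g|\,dm_\om+\be^N\|g\|_h C_2$ split using~\eqref{A1'}, the distortion contribution $C_1$, and the final assembly of the operator-norm bound floor-by-floor --- are correct and align with the paper's argument.
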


\begin{proof}
Let $g:\Del_\om\to\bbC$ and $\ell,N\geq1$. We assume first that 
 $N\leq\ell$. Then for any  $(x,\ell)\in\Del_{\sig^N\om, \ell}$ we have
\[
|P_\om^{it,N}g(x,\ell)|=|g(x,\ell-N)e^{it S_N^\om\varphi(x,\ell-N)}|\leq v_{\ell-N}\|g\|_s,
\]
which yields~\eqref{LY1.1}.
Moreover, for any $x_\ell=(x,\ell), y_\ell=(y,\ell)\in\Del_{\sig^N\om, \ell}$ that belong to the same partition element,  we have that  
\begin{eqnarray*}
|P_\om^{it,N}g(x_\ell)-P_\om^{it,N}g(y_\ell)|=|e^{it S_N^\om \varphi(x,\ell-N)}g(x_{\ell-N})-e^{it S_N^\om \varphi(y, \ell-N)}g(y_{\ell-N})|\\\leq |g(x_{\ell-N})-g(y_{\ell-N})|+\\|t|v_{\ell-N}\|g\|_s\sum_{j=0}^{N-1}|\varphi_{\sig^j\om}(x,\ell-N+j)-\varphi_{\sig^j\om}(y,\ell-N+j)|:=I_1+I_2.
\end{eqnarray*}
Since $d_{\om}(x_{\ell-N},y_{\ell-N})=\be^N d_{\sig^N\om}(x_\ell,y_\ell)$ we have
\[
I_1\leq v_{\ell-N}\|g\|_h\be^N d_{\sig^N\om}(x_\ell,y_\ell).
\]
Similarly, with $|\varphi_\om|:=\sup_{\ell}|\varphi_\om|_{\om,\Del_{\om,\ell}}$, where the last semi-norm is defined in \eqref{g norm def}, we have
\begin{eqnarray*}
\sum_{j=0}^{N-1}|\varphi_{\sig^j\om}(x,\ell-N+j)-\varphi_{\sig^j\om}(y,\ell-N+j)|\\\leq d_{\sig^N\om}(x_\ell,y_\ell)\text{ess-sup}|\varphi_\om|(\be^N+\be^{N-1}+...+\be^{N-j}).
\end{eqnarray*}
By combining the above estimates, we conclude that~\eqref{LY1.2} holds. 

Let us now consider the case when $x_\ell$ and $y_\ell$ do not belong to the same partition element. In this case, we have that 
\[
\begin{split}
| P_\om^{it, N}g(x_\ell)-P_\om^{it, N}g(y_\ell)| &\le |P_\om^{it, N}g(x_\ell)|+|P_\om^{it, N}g(y_\ell)|\\
&=|g(x, \ell-N)|+|g(y, \ell-N)| \\
&\le 2 v_{\ell-N}\lVert g\rVert_s \\
&=2v_{\ell-N}\be^{-1}\lVert g\rVert_s d_{\sig^N \om}(x_\ell, y_\ell),
\end{split}
\]
where in the last equality we have used that $d_{\sig^N \om}(x_\ell, y_\ell)=\beta$ since the separation time of their orbit is $1$. We conclude that~\eqref{LY1.2} also holds in the above case.

Now we will prove the second item. Suppose that $\ell<N$, and let $(x,\ell)=x_\ell\in\Del_{\sig^N\om, \ell}$. 
For any cylinder $C_N$ of length $N$ in $\Del_\om$ the map $F_\om^N|_{C_N}$ is surjective, and it defines an inverse branch of $F_\om^N$ (onto its image). Let use denote by $x_N=x_N(C_N)$ the unique preimage of $x_\ell$ under $F_\om^N$ which belongs to $C_N=C_N(x_N)$ (if such a preimage exists).
We then have
\begin{equation}\label{I3}
|P_\om^{it,N}g(x,\ell)|\leq\sum_{C_N}\left|\frac1{JF^N_\om(x_N)}\right|\cdot|g(x_N)|
\end{equation}
where the sum is over all cylinders $C_N$ for each $x_N(C_N)$ exists. 
Fix some cylinder $C_N$ and set
\[
A_g(C_N)=\frac1{m_\om(C_N)}\int_{C_N}gdm_\om.
\]
Then,
\[
|g(x_N)| \leq |A_g(C_N)|+\sup_{y_1,y_2\in C_N}|g(y_1)-g(y_2)|.
\]
Next, by Lemma \ref{CorDist} for any cylinder $C_N$ we have
\[
\left|\frac1{JF^N_\om(x_N)}\right|\leq Q m_\om(C_N).
\]
Note that we can indeed apply Lemma \ref{CorDist} since $\ell<N$ and so $F_\om^{N-\ell}x_N$ belongs to the $0$-th floor.
Since the diameter of $C_N$ does not exceed $\be^N$, we conclude that
 \begin{eqnarray}\label{I2}
|P_\om^{it,N}g(x,\ell)|\leq Q\int |g|dm_\om\\+Q\sum_{C_N}
\be^N\sum_{k\geq0}\sum_{C_N\subset \Del_{\om,k}}m_\om(C_N)|g|_{\be,\Del_{\om,k}}\nonumber\\\leq
Q\int |g|dm_\om
+\be^NQ\sum_{k\geq0}\sum_{C_N\subset \Del_{\om,k}}v_k m_\om(C_N)v_k^{-1}|g|_{\be,\Del_{\om,k}}\nonumber\\\leq Q\left(\int |g|d m_\om+\be^N\|g\|_h\cdot\sum_{k\geq0}v_km_\om(\Del_{\om,k})\right)\nonumber,
 \end{eqnarray}
and the proof of  (\ref{LY2.1}) is completed.

Now we will prove (\ref{LY2.2}). Let $x_\ell=(x,\ell)$ and $y_\ell=(y,\ell)$ belong to $\Del_{\sig^N\om,\ell}$. When  they do not belong to the same partition element on the $\ell$-th floor then $d_{\sig^N\om}(x_\ell,y_\ell)=\be$, and so (\ref{LY2.2}) follows from (\ref{LY2.1}). Suppose now that $d_{\sig^N\om}(x_\ell,y_\ell)<\be$.
Then we can couple the inverse images of $x_\ell$ and $y_\ell$ under $F_\om^N$ and index them according to a subset of cylinders of length $N$, so that the preimage indexed by $C_N$ belongs to $C_N$. 
That is, the preimgaes $\{x'(C_N)\}$ and $\{y'(C_N)\}$ have the form
\[
x'=x'(C_N)=\left(F_\om^N|_{C_N}\right)^{-1}x_\ell \text{ and } y'=y'(C_N)=\left(F_\om^N|_{C_N}\right)^{-1}y_\ell.
\]
We have
\begin{eqnarray*}
|P_\om^{it,N}g(x_\ell)-P_\om^{it,N}g(y_\ell)|\leq\sum_{C_N}\left|\frac1{JF^N_\om x'}e^{it S_N^\om\varphi (x')}g(x')-\frac1{JF^N_\om y'}e^{it S_N^\om\varphi y'}g(y')\right|.
\end{eqnarray*}
Fix some $C_N$ and $x'=x'(C_N)$ and $y'=y'(C_N)$. We also set $g_{N,t}=e^{it S_N^\om\varphi}g$.
Then
\begin{eqnarray*}
\left|\frac1{JF^N_\om x'}e^{it S_N^\om\varphi (x')}g(x')-\frac1{JF^N_\om y'}e^{it S_N^\om\varphi (y')}g(y')\right|\\\leq \frac{|g_{N,t}(x')-g_{N,t}(y')|}{JF^N_\om x'}+|g(y')|\left|\frac1{JF^N_\om x'}-\frac1{{JF^N_\om y'}}\right|\\\leq
\frac{|g(x')|\cdot|e^{it S_N^\om\varphi(x')}-e^{it S_N^\om \varphi(y')}|}{|JF^{N}_\om x'|}
+\frac{|g(x')-g(y')|}{|JF_\om^{N} x'|}\\+|g(y')|\cdot
\left|\frac1{JF_\om^{N} x'}-\frac1{JF_\om^{N} y'}\right|:=I_1+I_2+I_3.
\end{eqnarray*}
By the distortion assumption (\ref{Distortion}) on $J F_\om$ we have
\[
I_3\leq C_1|g(y')|\be^{s_{\sig^N\om}(x_\ell,y_\ell)}/ |JF_\om^N y'|.
\]
Therefore, the contribution to the sum over $C_N$ coming from $I_3$ is bounded from above by the right hand side of (\ref{I3}) times $C_1\be^{s_{\sig^N\om}(x_\ell,y_\ell)}$. Moreover, also the contribution coming from $I_2$ does not exceed the right hand side of (\ref{I2}) multiplied by $\be^{s_{\sig^N\om}(x_\ell,y_\ell)}$. It remains to estimate $I_1$. Using the mean value theorem and that $\varphi_\om$ are uniformly H\"older continuous we have 
\begin{eqnarray*}
|e^{it S_N^\om \varphi(x')}-e^{it S_N^\om \varphi(y')}|\leq|t|\sum_{k=0}^{N-1}|\varphi_{\sig^k\om}(F_\om^{k} x')-\varphi_{\sig^k\om}(F_\om^{k} y')|\\\leq\|\varphi\||t|\sum_{k=0}^{N-1}\be^{s_{\sig^k\om}(F_\om^{k} x',F_\om^{k} y')}=
\|\varphi\||t|\be^{s_{\sig^N\om}(x_\ell,y_\ell)}\sum_{k=0}^{N-1}\be^{k} \\
\leq A|t|\be^{s_{\sig^N\om}(x_\ell,y_\ell)}
\end{eqnarray*}
where $\|\varphi\|:=\text{ess-sup }\sup_{\ell}|\varphi_\om|_{\Del_{\om,\ell}}$.
This completes the proof of the proposition.
\end{proof}

\subsubsection{Application: the $\al$-mixing condition}
The following corollary will play an important role in the proof that the cylinders are $\al$-mixing. In the deterministic case this result was (essentially) proven in \cite[Lemma 4]{HydPsyl}, but we will provide a different proof. We consider the following norm of a function $g_\om \colon \Delta_\om
\to\bbC$ 
\[
\|g\|_{Li}=\|g\|_{Li,\om}=\|g\|_\infty+|g|_\om
\]
where $\|g\|_\infty=\sup|g|$ and 
\begin{equation}\label{g beta}
|g|_\om=|g|_{\om,\be}=\sup_{\ell\geq0}|g|_{\om,\Del_{\om,\ell}}.
\end{equation}
Then $\|g\|_{Li,\om}=\|gv\|_\om=\|gv\|_s+\|gv\|_h$ for any $g:\Del_\om\to\bbC$, where $gv(x,\ell)=v_\ell g(x)$. Let us also define $\cH_{\om}=\cH_{\om,\be}$ to be the linear space of all functions  $g_\om \colon \Delta_\om\to\bbC$ so that $\|g\|_{Li,\om}<\infty$. Then $\cH_{\om}$ is a Banach space.

\begin{corollary}\label{CorLY}
There exists a constant $C_3>0$ so that for $\bbP$-a.e. $\om$, 
$g:\Del_\om\to\bbC$, $N\geq1$ and a function $u:\Del_\om\to\bbC$ which is constant on cylinders of order $N$, 
\[
\|P_\om^{0,N}(gu)\|_{Li,\te^N\om}\leq C_3\left(1+(\sup|g|+\sup|u|)^2+|g|_\om\right).
\]
\end{corollary}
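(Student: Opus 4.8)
The plan is to bound separately the two constituents $\lVert P_\om^{0,N}(gu)\rVert_\infty$ and $\lvert P_\om^{0,N}(gu)\rvert_{\te^N\om}$ of $\lVert\,\cdot\,\rVert_{Li,\te^N\om}$, following closely the case analysis in the proof of Proposition \ref{Prop LY}: for a point on a floor $\ell\ge N$ of $\Del_{\te^N\om}$ the map $F_\om^N$ has the unique ``vertical'' inverse branch $(x,\ell)\mapsto(x,\ell-N)$, with unit Jacobian, whereas for $\ell<N$ one sums over the genuine inverse branches attached to cylinders of order $N$. The mechanism that makes the estimate go through is that $u$ is constant along each cylinder $C_N$ of order $N$, so whenever we evaluate along the inverse branch $(F_\om^N|_{C_N})^{-1}$ the factor $u$ is unchanged; hence $u$ may be pulled out of every relevant sum at the price of one factor $\sup\lvert u\rvert$, and what is left is handled by Lemma \ref{CorDist} and the Lasota--Yorke estimates (\ref{LY2.1})--(\ref{LY2.2}) applied with $t=0$.

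For the sup-norm one writes $\lvert P_\om^{0,N}(gu)(x)\rvert\le\sup\lvert g\rvert\sup\lvert u\rvert\cdot P_\om^{0,N}\mathbf 1(x)$ and observes that $P_\om^{0,N}\mathbf 1$ is uniformly bounded: by Remark \ref{Rem} it equals $1$ identically on the floors $\ell\ge N$, and on the floors $\ell<N$ it is $\le Q\,m_\om(\Del_\om)\le QC$ by (\ref{LY2.1}) with $g\equiv\mathbf 1$ (note $\lVert\mathbf 1\rVert_h=0$) together with (\ref{A1}). This yields the $(\sup\lvert g\rvert+\sup\lvert u\rvert)^2$ contribution. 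For the Hölder seminorm on a floor $\ell\ge N$: a pair of points in distinct atoms has $d_{\te^N\om}=\be$ and is controlled by $2\be^{-1}\lVert P_\om^{0,N}(gu)\rVert_\infty$; a pair in a common atom pulls back under the vertical branch to a pair in a common atom on floor $\ell-N$ whose separation time is larger by exactly $N$, hence to a pair lying in a common cylinder of order $N$, so that $u$ cancels and the difference is $\le\sup\lvert u\rvert\,\lvert g\rvert_{\om,\Del_{\om,\ell-N}}\,\be^N d_{\te^N\om}\le\sup\lvert u\rvert\,\lvert g\rvert_\om\,d_{\te^N\om}$. On a floor $\ell<N$ one repeats the inverse-branch coupling from the proof of (\ref{LY2.2}): for each cylinder $C_N$ the two coupled preimages $x'(C_N),y'(C_N)$ both lie in $C_N$, hence carry the same value of $u$, so $u$ factors out and the remaining sum is exactly the one estimated there with $t=0$, giving $\le\sup\lvert u\rvert(C_1+2\be^{-1})R_N(g)\,d_{\te^N\om}$; moreover $R_N(g)=Q\bigl(\int\lvert g\rvert\,dm_\om+\be^N\lVert g\rVert_hC_2\bigr)\le Q\bigl(C\sup\lvert g\rvert+C_2v_0^{-1}\lvert g\rvert_\om\bigr)$ by (\ref{A1}) and $\lVert g\rVert_h\le v_0^{-1}\lvert g\rvert_\om$.

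Adding the pieces, $\lVert P_\om^{0,N}(gu)\rVert_{Li,\te^N\om}$ is bounded by a fixed constant (depending only on $Q,C,C_1,C_2,v_0,\be$) times $1+\sup\lvert g\rvert\sup\lvert u\rvert+\sup\lvert u\rvert\,\lvert g\rvert_\om+\sup\lvert g\rvert+\lvert g\rvert_\om$, and collecting terms via elementary inequalities produces the asserted bound with a suitable $C_3$. The only genuinely delicate point is the separation-time bookkeeping: one must verify that on the high floors the vertical inverse branch increases the separation time by precisely $N$ (so that it never breaks a cylinder of order $N$), and, in the low-floor case, that both branches of the coupling land in the same cylinder $C_N$, so that $u$ truly cancels rather than producing an uncontrolled factor $\be^{-N}$ coming from the jumps of $u$ across cylinders of order $N$. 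Once these are in place, the estimate is a direct consequence of Proposition \ref{Prop LY} and Lemma \ref{CorDist}.
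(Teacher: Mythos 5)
Your proof is correct in substance, but it takes a genuinely different route from the paper's. The paper's proof is more of a ``black box'' argument: on the high floors $\ell\geq N$ it expands the difference of products via the product rule and controls the $u$-variation by the crude estimate $|u|_\om\leq 2\sup|u|\,\be^{-N}$ (which encodes the $N$-cylinder constancy only through a Lipschitz constant), and on the low floors $\ell<N$ it simply invokes Proposition~\ref{Prop LY}(ii) applied directly to the product $gu$, with the $\be^N$ factor in $R_N(gu)$ absorbing the $\be^{-N}$ coming from $\|gu\|_h$. You instead open up the inverse-branch structure and observe the sharper fact that $u$ \emph{cancels exactly}: on high floors the vertical preimages $(x,\ell-N)$ and $(y,\ell-N)$ of a same-atom pair lie in a common $N$-cylinder (hence $u$ factors out), and on low floors the coupled preimages $x'(C_N),y'(C_N)$ lie in the same $C_N$ by construction (hence $u$ again factors out). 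This makes the mechanism more transparent --- the $\be^{-N}$ simply never appears --- and reduces everything to $\sup|u|$ times the estimate for $g$ alone. Both routes lead to a bound of the form $\mathrm{const}\cdot\big(\sup|g|\sup|u|+\sup|u|\,|g|_\om\big)$, and your separation-time bookkeeping (that the vertical inverse branch increases $s_\om$ by exactly $N$, and that both branches of the coupling land in $C_N$) is the correct thing to verify and is indeed true. One small caveat, which is not your fault but is shared with the paper's own argument: the term $\sup|u|\,|g|_\om$ that both proofs produce is not in general dominated by the right-hand side $C_3\big(1+(\sup|g|+\sup|u|)^2+|g|_\om\big)$ as literally stated (take, e.g., $|g|_\om$ large and $\sup|u|$ of intermediate size); so your closing assertion that ``collecting terms via elementary inequalities produces the asserted bound'' glosses over this. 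The bound is correct as used in the paper, because in every application ($u=\bbI_A$) one has $\sup|u|\leq 1$, in which case $\sup|u|\,|g|_\om\leq|g|_\om$ and everything goes through. It would be safer to state the conclusion with $\sup|u|\,|g|_\om$ in place of $|g|_\om$, or to record the hypothesis $\sup|u|\leq 1$.
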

\begin{proof}
Let $(x,\ell),(y,\ell)\in\Del_{\om,\ell}$. Assume first that $N\leq \ell$. It is clear that 
\[
|P^{0,N}_\om (gu)(x,\ell)|=|g(x,\ell-N)u(x,\ell-N)|\leq \sup|g|\sup|u|.
\]
Next, observe that $|u|_\om\leq \sup2|u|\be^{-N}$ (since $u(x)=u(y)$ if $d_\om(x,y)\leq\be^N$). Therefore,
\begin{eqnarray*}
|P^{0,N}_\om (gu)(x,\ell)-P^{0,N}_\om (gu)(y,\ell)|\\=|g(x,\ell-N)u(x,\ell-N)-g(y,\ell-N)u(y,\ell-N)|\\\leq 
\sup|g|\cdot |u(x,\ell-N)-u(y,\ell-N)|+\sup|u||g|_\om \beta^N d(x,y)\leq \\2\sup|g|\sup|u|\beta^Nd(x,y)\beta^{-N}+\sup|u||g|_\om \beta^N d(x,y)\\=(2\sup|g|+\beta^N|g|_\om)\sup|u|d(x,y).
\end{eqnarray*}
The desired estimates in the case $N>\ell$ follow from Proposition \ref{Prop LY} (ii) applied with the function $gu$.
\end{proof}

Next, define
\begin{equation}\label{d k}
d_{k}=\text{ess-sup}_\om\sup_{g\in\cH_{+,\om}}\|P_\om^{0,k} g-m_\om(g)h_{\sig^k\om}\|_{L^1(m_{\sig^k\om})}/\|g\|_{Li}.
\end{equation}
Here  $\cH_{+,\om}$ is the space of all non-negative functions on $\Del_\om$ so 
 that $\|g\|_{Li,\om}<\infty$ (note\footnote{Here $gdm_\om$ denotes the absolutely continuous measure w.r.t. $m_\om$ whose density is $g$.} that $\|P_\om^{0,k} g-m_\om(g)h_{\sig^k\om}\|_{L^1(m_{\sig^k\om})}
 =\|(F_\om^k)_*(gdm_\om)-\mu_{\sig^k\om}\|_{TV}$, and that it is enough to consider $g$'s so that $m_\om(g_\om)=1$). 
The following result  is a particular case of \cite[Theorem 2.5]{ABR}.
\begin{theorem}\cite[Theorem 2.5]{ABR}\label{AssMixing}
If $m_\om(R_\om\geq k)$ decay (stretched) exponentially fast to $0$ uniformly in $\om$ then 
$d_k$ decays (stretched) exponentially fast to $0$. If $m_\om(R_\om\geq k)\leq Ck^{-a-1}$ for some $a>1$ then 
$d_k=O(k^{-(a-1-\ve)})$ for every $\ve>0$.
\end{theorem}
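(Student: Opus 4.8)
This is, as stated, a particular case of \cite[Theorem~2.5]{ABR}, so the quickest route is to check that the quantity $d_k$ of \eqref{d k} is exactly the one estimated there and invoke that reference; I describe below the argument I would run if proving it directly. The first step is a reduction: any $g\in\cH_{+,\om}$ with $\|g\|_{Li}<\infty$ may be normalised so that $m_\om(g)=1$, and then, as noted right after \eqref{d k} and using Theorem~\ref{Thm 2.5 1}, $\|P_\om^{0,k}g-m_\om(g)h_{\sig^k\om}\|_{L^1(m_{\sig^k\om})}=\|(F_\om^k)_*(g\,dm_\om)-\mu_{\sig^k\om}\|_{TV}$. Hence, modulo the harmless division by $\|g\|_{Li}$, $d_k$ is the worst-case total variation distance at time $k$ between the push-forward of an arbitrary Lipschitz probability density on $\Del_\om$ and the equivariant family $\mu_\om$.

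The plan is then the coupling argument of Young \cite{Y1,Y2} in the random form of \cite{BBM} and \cite{ABR}. One runs two copies of the tower dynamics driven by the same point $\om$, started from $g\,dm_\om$ and from $\mu_\om=h_\om\,dm_\om$ respectively, and couples them at simultaneous returns to the zero level. The local mechanism is this: whenever both copies are carried onto some base $\Del_{\sig^j\om,0}$, the bounded distortion estimates \eqref{Distortion}--\eqref{Distortion0} together with the comparability of $m_\om(C)$ with $1/JF_\om^k x$ from Lemma~\ref{CorDist} show that the two conditional densities there are comparable up to a constant independent of $\om$; hence a definite proportion $\te_0>0$ of mass can be matched and deleted, while the remaining proportion $1-\te_0$ is transported up the tower to await the next common return. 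Iterating, the coupling time $T$ is stochastically dominated, uniformly in $\om$, by $\sum_{i=1}^{N}\tau_i$ with $N$ geometric of parameter $\te_0$ and the $\tau_i$ i.i.d.\ satisfying $\bbP(\tau_1\ge n)\le c_1e^{-c_2n}$ (Assumption~\ref{Ass Exp}), with the obvious modification in the stretched-exponential and polynomial regimes.

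It then remains to estimate $\bbP(T>k)$, since $\|(F_\om^k)_*(g\,dm_\om)-\mu_{\sig^k\om}\|_{TV}\le\bbP(T>k)$. I would split on the number $N_k$ of common returns before time $k$: on $\{N_k\ge\eta k\}$ the matching has failed only on a set of probability at most $(1-\te_0)^{\eta k}$, whereas $\{N_k<\eta k\}$ forces at least one of the first $\lceil\eta k\rceil$ return increments to exceed order $1/\eta$ times $k$, an event controlled by the uniform return-time tail. Optimising over $\eta$ gives exponential decay of $d_k$ from an exponential tail, stretched-exponential from a stretched-exponential one, and $d_k=O(k^{-(a-1-\ve)})$ from a tail $\le Ck^{-a-1}$; the $\ve$-loss (and the extra lost unit in the polynomial case) are the familiar price of the renewal estimate together with the $\text{ess-sup}_\om$, and for that bookkeeping I would cite the computations in \cite{Y2} and \cite{ABR} rather than redo them.

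The hard part is the uniform-in-$\om$ lower bound $\te_0$ for the fraction matched at each common return. This rests on (a) the Gibbs--Markov ``full branch onto the base'' property of the return map $F^R$, built into the tower construction; (b) the fact that the distortion constant $A_1$ in \eqref{Distortion0} (hence the constant $Q$ in Lemma~\ref{CorDist}) is independent of $\om$, so that the densities transported up and down the tower stay Lipschitz with an $\om$-independent constant --- which is exactly what Proposition~\ref{Prop LY} at $t=0$ (equivalently Corollary~\ref{CorLY}) together with the regularity of $h_\om$ from Theorem~\ref{Thm 2.5 1} provide; and (c) a uniform positive lower bound on the mass that has returned to the base within a bounded number of steps, for which the summability \eqref{A1} is used. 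A secondary, purely combinatorial, point is checking in the polynomial regime that $a-1-\ve$ is really all that is lost; I would defer that bookkeeping to \cite{ABR}.
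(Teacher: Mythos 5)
The paper does not prove this statement: it is quoted verbatim (with the $d_k$ of \eqref{d k}) as a special case of \cite[Theorem~2.5]{ABR}, and your proposal correctly identifies that and invokes the same reference. Your accompanying sketch of the underlying random-coupling argument (two copies driven by the same $\om$, matching a uniform fraction $\te_0$ of mass at common returns to the base, then estimating the coupling tail) is faithful to the \cite{BBM}/\cite{ABR} method; the only slip is in the tail-probability heuristic for $\{N_k<\eta k\}$, where ``one of the $\lceil\eta k\rceil$ increments exceeds order $1/\eta$'' yields a constant threshold rather than one growing with $k$ --- the actual estimate in the polynomial regime needs the sharper ``one big jump'' decomposition, which you rightly defer to \cite{ABR} and \cite{Y2}.
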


Now we are ready to prove the aforementioned $\alpha$-mixing results. Let $\cA_{\om,n}$ be the $\sigma$-algebra generated by all the cylinder sets $C_{\om,n}$ of order $n$ in $\Del_\om$.

\begin{proposition}\label{MixProp}
There is a constant $D>0$ so that for any $\om,n,k\geq0$, $A\in \cA_{\om,n}$ and a measurable set $B\subset\Del_{\sig^{n+k}\om}$,
\begin{equation}\label{al-mix}
\left|\mu_\om(A\cap (F_\om^{n+k})^{-1}B)-\mu_\om(A)\mu_\om((F_\om^{n+k})^{-1}B)\right|\leq Dd_k.
\end{equation}
\end{proposition}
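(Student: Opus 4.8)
The plan is to rewrite the left-hand side of \eqref{al-mix} as an $L^1$-quantity that is controlled directly by $d_k$. Writing $\mathbf 1_{A\cap(F_\om^{n+k})^{-1}B}=\mathbf 1_A\cdot(\mathbf 1_B\circ F_\om^{n+k})$ and using $\mu_\om=h_\om\,dm_\om$ together with the defining duality $\int(\psi\circ F_\om^N)g\,dm_\om=\int\psi\,P_\om^{0,N}g\,dm_{\sig^N\om}$ of the transfer operators (with weight $z=0$), one gets
\[
\mu_\om\big(A\cap(F_\om^{n+k})^{-1}B\big)=\int_B P_\om^{0,n+k}(\mathbf 1_A h_\om)\,dm_{\sig^{n+k}\om}.
\]
On the other hand, by Theorem \ref{Thm 2.5 1} and the equivariance $(F_\om^{n+k})_*\mu_\om=\mu_{\sig^{n+k}\om}$ we have $\mu_\om\big((F_\om^{n+k})^{-1}B\big)=\mu_{\sig^{n+k}\om}(B)=\int_B h_{\sig^{n+k}\om}\,dm_{\sig^{n+k}\om}$, while $\int \mathbf 1_A h_\om\,dm_\om=\mu_\om(A)$. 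Since $|\mathbf 1_B|\le 1$, the left-hand side of \eqref{al-mix} is therefore at most $\|P_\om^{0,n+k}(\mathbf 1_A h_\om)-\mu_\om(A)h_{\sig^{n+k}\om}\|_{L^1(m_{\sig^{n+k}\om})}$.

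To estimate this I would split $P_\om^{0,n+k}=P_{\sig^n\om}^{0,k}\circ P_\om^{0,n}$ and set $g_1=P_\om^{0,n}(h_\om\mathbf 1_A)$ (the case $n=0$ being trivial, as then $\cA_{\om,0}=\{\emptyset,\Del_\om\}$). Applying Corollary \ref{CorLY} with $g=h_\om$ and $u=\mathbf 1_A$ --- which is constant on cylinders of order $n$ because $A\in\cA_{\om,n}$ --- together with the uniform bounds on $\sup h_\om$ and $|h_\om|_\om$ provided by Theorem \ref{Thm 2.5 1}, gives $\|g_1\|_{Li,\sig^n\om}\le C_4$ for a constant $C_4$ independent of $\om$, $n$ and $A$. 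Moreover $g_1\ge 0$ (the transfer operator maps non-negative functions to non-negative ones) and $m_{\sig^n\om}(g_1)=\int h_\om\mathbf 1_A\,dm_\om=\mu_\om(A)$, since $P_\om^{0,n}$ preserves the integral against the reference measures. Hence $g_1\in\cH_{+,\sig^n\om}$ with $m_{\sig^n\om}(g_1)=\mu_\om(A)$, and the definition \eqref{d k} of $d_k$, used with base point $\sig^n\om$, yields
\[
\|P_\om^{0,n+k}(h_\om\mathbf 1_A)-\mu_\om(A)h_{\sig^{n+k}\om}\|_{L^1(m_{\sig^{n+k}\om})}=\|P_{\sig^n\om}^{0,k}g_1-m_{\sig^n\om}(g_1)h_{\sig^{n+k}\om}\|_{L^1(m_{\sig^{n+k}\om})}\le d_k\,\|g_1\|_{Li,\sig^n\om}\le C_4 d_k.
\]
Combining this with the first step gives \eqref{al-mix} with $D=C_4$.

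I do not expect a serious obstacle. The only things to track carefully are the fibre base point attached to each transfer operator and norm, and the verification that $g_1$ lies in the class $\cH_{+,\sig^n\om}$ over which $d_k$ is an essential supremum (this also uses that, for $\bbP$-a.e.\ $\om$ and all $n\ge 0$, the point $\sig^n\om$ stays in the full-measure set on which the bound defining $d_k$ holds --- immediate from $\sig$-invariance of $\bbP$). Both points are routine once Corollary \ref{CorLY} is available, that corollary being designed precisely to produce a non-negative function of uniformly bounded $\|\cdot\|_{Li}$-norm after one application of the transfer operator to a product with a function constant on the relevant cylinders. Note that no decay property of $d_k$ is used here; the quantitative $\al$-mixing rate is then read off from Theorem \ref{AssMixing}.
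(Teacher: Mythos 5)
Your proof is correct and follows essentially the same route as the paper: both write the covariance as an integral over $B$ of $P^{0,k}_{\sig^n\om}(\zeta)-\mu_\om(A)h_{\sig^{n+k}\om}$ with $\zeta=P_\om^{0,n}(\bbI_A h_\om)$, invoke Corollary~\ref{CorLY} (with $g=h_\om$, $u=\bbI_A$) to get a uniform $\|\cdot\|_{Li}$ bound on $\zeta$, note $m_{\sig^n\om}(\zeta)=\mu_\om(A)$, and conclude via the definition of $d_k$. You merely spell out a few steps the paper leaves implicit (the $L^1$ bound, non-negativity of $\zeta$, the full-measure point for $\sig^n\om$).
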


\begin{proof}
The proof of (\ref{al-mix}) continuous similarly to \cite[Section 4.1]{HydPsyl}. That is, using that $P_\om$ is the dual of $F_{\sig\om}$
 we get that
 \begin{eqnarray}\label{alBase}
 \mu_\om(A\cap (F_\om^{n+k})^{-1}B)-\mu_\om(A)\mu_\om((F_\om^{n+k})^{-1}B)\\=\int_B\left(P^{0,k}_{\sig^n\om}(\zeta)-\mu_\om(A)h_{\sig^{n+k}\om}\right)dm_{\sig^{n+k}\om}\nonumber
 \end{eqnarray}
where $\zeta=P_{\om}^{0,n}(\bbI_{A}h_\om)$. By Corollary \ref{CorLY} we have $\|\zeta\|_{Li}\leq C_3$. This clearly yields (\ref{al-mix}), taking into account that 
\[
m_{\sig^n\om}(\zeta)=m_\om(\bbI_{A}h_\om)=\mu_\om(A).
\]
\end{proof}

\subsection{Random partitions}
We  define a new measure on $\Del_\om$ by  $\tilde m_\om=vd m_\om$, where $(v_\ell)$ is the sequence from the previous section. Our assumption here concerning these measure is that 
\begin{equation}\label{UnfT}
\lim_{\ell\to\infty}\text{ess-sup}_\om \tilde m_\om(\cup_{j\geq\ell}\Del_{\om,j})=0.
\end{equation}
In  Section \ref{SecLimThms} we will have stronger assumptions on the rate of decay of $m_\om(R_{\om}\geq n)$, but we believe that the partitions constructed here  have their own interest, and so the results are formulated under weaker conditions (and for general increasing sequences $(v_\ell)_{\ell\geq 0}$).

We first need the following result.

\begin{proposition}\label{PrpCn1}
Under (\ref{UnfT}) and Assumption \ref{AsCov}, for every $\ve>0$ and $s\in\bbN$ there are $\del>0$, $M\geq1$ so that for $\bbP$-a.a. $\om$ there are at most $M$ disjoint cylinders $A_{\om,1},...,A_{\om,j_\om}$, $j_\om\leq M$ of order  $s$ in $\Del_\om$ so that for all $1\leq i\leq M$,
\begin{equation}\label{MIN}
\min \{ \mu_\om(A_{\om,i}),m_\om(A_{\om,i})\}\geq\del
\end{equation}
and with $A_{\om,j_\om+1}=\Del_\om\setminus(A_{\om,1}\cup\cdots\cup A_{\om,j_\om})$ we have
\[
\del\leq \min \{\mu_\om(A_{\om,j_\om+1}),m_\om(A_{\om,j_\om+1})\}\,\text{ and }\,\tilde m_\om(A_{\om,j_\om+1})<\ve.
\]
\end{proposition}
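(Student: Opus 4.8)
The plan is to combine Assumption \ref{AsCov}, which gives a uniform collection of base-atoms $Q_{\om,i}$ (atoms of the order-$1$ partition $\cQ_\om$) controlling all but an $\ve$-fraction of $m_\om$-mass, with the tail hypothesis (\ref{UnfT}), and then refine these atoms to cylinders of the requested order $s$. First I would fix $\ve>0$ and $s\in\bbN$. Using (\ref{UnfT}), choose $\ell_0$ so large that $\text{ess-sup}_\om\,\tilde m_\om(\cup_{j\ge\ell_0}\Del_{\om,j})<\ve/2$; everything above level $\ell_0$ can then be dumped into the junk set. Next, apply Assumption \ref{AsCov} with the parameter $\ve/(4Q)$ (the constant $Q$ from Lemma \ref{CorDist}, or whatever explicit constant comes out of the distortion control below) to obtain $J,\del_0$ and, for a.e. $\om$, atoms $Q_{\om,1},\dots,Q_{\om,k_\om}$ with $k_\om\le J$, each of $m_\om$-mass $\ge\del_0$, and complement $Q$ satisfying $\del_0\le m_\om(Q)<\ve/(4Q)$.

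The refinement step is the heart of the matter. Each $Q_{\om,i}=\Del_{\om,\ell_i,j_i}$ is an atom of $\cQ_\om$; I want to replace it by a single cylinder of order $s$ sitting inside it that still carries mass bounded below uniformly in $\om$. On $Q_{\om,i}$ the map $F_\om^{s}$ (or rather the relevant iterate that realizes the first $s$ symbols) is at most countable-to-one, and Lemma \ref{CorDist} together with Remark \ref{Rem} gives two-sided comparisons $Q^{-1}m_\om(C)\le 1/JF_\om^{\,r}x\le Qm_\om(C)$ for each order-$s$ cylinder $C\subset Q_{\om,i}$. Since $m_\om(Q_{\om,i})=\sum_{C\subset Q_{\om,i}}m_\om(C)\ge\del_0$ and there are at most finitely many distinct combinatorial "types" of order-$s$ cylinders once we also restrict to cylinders whose levels stay below $\ell_0$ (the separation time is bounded by the levels, and only finitely many atoms $\Lam_{\om,i}$ with $R_{\om,i}\le s+\ell_0$ are involved — here one uses Assumption \ref{AsCov}(i)/Remark \ref{RemApp} or the exponential tails to bound the number of such atoms uniformly), one can select for each $i$ one cylinder $A_{\om,i}\subset Q_{\om,i}$ of order $s$, contained in levels $<\ell_0$, with $m_\om(A_{\om,i})\ge\del_1$ for some $\del_1>0$ independent of $\om$. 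The $\mu_\om$-lower bound in (\ref{MIN}) then follows from $c_0\le h_\om\le c_1$ in Theorem \ref{Thm 2.5 1}, so $\mu_\om(A_{\om,i})\ge c_0\del_1$. Set $M=J$ (relabelling if $k_\om<J$) and let $A_{\om,j_\om+1}$ be the complement of $A_{\om,1}\cup\cdots\cup A_{\om,j_\om}$.

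It remains to bound the junk set $A_{\om,j_\om+1}$ from both sides. It decomposes as $Q\cup\bigcup_i(Q_{\om,i}\setminus A_{\om,i})$. The piece $Q$ has $m_\om$-mass $<\ve/(4Q)$, hence $\tilde m_\om(Q)$ is small after also discarding levels $\ge\ell_0$ via (\ref{UnfT}); I must argue that $Q_{\om,i}\setminus A_{\om,i}$ is likewise $\tilde m_\om$-small — this is the step that forces the choice of $A_{\om,i}$: rather than "some cylinder with mass $\ge\del_1$", I should pick $A_{\om,i}$ to be (essentially) the union of order-$s$ cylinders inside $Q_{\om,i}$ lying in levels $<\ell_0$, which is automatically a union of cylinders hence a cylinder up to the finite combinatorial bookkeeping, and whose complement inside $Q_{\om,i}$ is contained in $\cup_{j\ge\ell_0}\Del_{\om,j}$; then $\tilde m_\om(Q_{\om,i}\setminus A_{\om,i})\le\tilde m_\om(\cup_{j\ge\ell_0}\Del_{\om,j})<\ve/2$ uniformly. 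Summing, $\tilde m_\om(A_{\om,j_\om+1})<\ve/2+\ve/2=\ve$ after adjusting the constants. The lower bounds $\mu_\om(A_{\om,j_\om+1})\ge\del$ and $m_\om(A_{\om,j_\om+1})\ge\del$ come for free since $A_{\om,j_\om+1}\supset Q$ and $m_\om(Q)\ge\del_0$, together with $h_\om\ge c_0$. Shrinking the final $\del$ to the minimum of all constants produced ($\del_1,c_0\del_1,\del_0,c_0\del_0$) completes the proof. The main obstacle I anticipate is the second paragraph: ensuring that the selected $A_{\om,i}$ is genuinely a \emph{single} cylinder of order $s$ (as the statement demands) with a uniform lower mass bound, which requires a uniform-in-$\om$ bound on the number of relevant order-$s$ cylinders — this is exactly where the hypotheses in Remark \ref{RemApp}/Assumption \ref{AsCov} on boundedness of the Jacobian of $f_\om^{R_\om}$ (or the uniform tail decay) get used, and one may instead need to phrase $A_{\om,i}$ as a finite union of cylinders and observe that for the later applications (the $\al$-mixing and cone arguments) a finite union of order-$s$ cylinders is as good as one cylinder; I would check which formulation the subsequent sections actually consume.
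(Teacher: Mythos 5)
Your approach of applying Assumption \ref{AsCov} only at time $0$ and then refining each base atom $Q_{\om,i}$ into a single order-$s$ cylinder sitting inside it is fundamentally different from the paper's construction, and it does not survive the difficulty you yourself flag at the end. Inside a fixed atom $Q_{\om,i}$ the order-$s$ cylinders branch over the countable partition of the base each time the orbit returns to $\Del_{\cdot,0}$; their number is not uniformly bounded, and selecting one with a uniform lower mass bound is not, in general, possible. More importantly, even if you could pick such a cylinder, the rest of $Q_{\om,i}$ would be dumped into the junk, and there is no reason for $\tilde m_\om(Q_{\om,i}\setminus A_{\om,i})$ to be small --- this is exactly the obstruction you identify, and restricting to levels $<\ell_0$ does not cure it since the lost mass comes from the return-time branching, not from high levels.

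Your suggested repair --- allowing $A_{\om,i}$ to be a finite union of order-$s$ cylinders --- is not acceptable for what follows. The cone argument in the proof of Proposition \ref{ConeProp1} uses in an essential way that the $d_\om$-diameter of each $A_{\om,i}$, $1\le i\le j_\om$, is at most $\beta^s$; see the inequality labelled (\ref{Est Q}), which bounds $|g(x)-\frac{1}{\tilde m_\om(A_{\om,i})}\int_{A_{\om,i}}g\,d\tilde m_\om|$ by $|g|_\om\beta^s$. A union of two order-$s$ cylinders with different first-coordinate itineraries already has diameter $\beta$, so this estimate collapses and the cone invariance fails.

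The paper's construction circumvents both issues at once. One applies Assumption \ref{AsCov} (with a small parameter $\ve'$) at \emph{each} of the times $j=0,\dots,s-1$, obtaining atoms $Q_{\sig^j\om,1},\dots,Q_{\sig^j\om,k_{\sig^j\om}}$ with $k_{\sig^j\om}\le J$, and defines $A_{\om,1},\dots,A_{\om,j_\om}$ to be the nonempty sets of the form $\bigcap_{i=0}^{s-1}(F_\om^i)^{-1}Q_{\sig^i\om,u_i}$. Each such set is automatically a single atom of $C_{\om,s}=\bigvee_{i=0}^{s-1}(F_\om^i)^{-1}\cQ_{\sig^i\om}$, hence a genuine order-$s$ cylinder, and there are at most $J^s=:M$ of them. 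The uniform lower bound on $m_\om(A_{\om,i})$ comes from Lemma \ref{CorDist} and Remark \ref{Rem}: each return-time segment of the Jacobian is comparable to the reciprocal of the $m$-measure of one of the chosen atoms, which is $\ge\del'$, giving $m_\om(A_{\om,i})\ge Q^{-s-1}(\del')^s$. Finally, the junk set $B=\Del_\om\setminus\bigcup_i A_{\om,i}$ is contained in $\bigcup_{j=0}^{s-1}(F_\om^j)^{-1}(\Del_{\sig^j\om}\setminus Q_{\sig^j\om})$, so by the $F_\om$-equivariance of $\mu_\om$ one gets $m_\om(B)\le c\sum_j\mu_{\sig^j\om}(\Del_{\sig^j\om}\setminus Q_{\sig^j\om})\le c v_0^{-1}s\ve'$, and then $\tilde m_\om(B)\le\tilde m_\om(\cup_{\ell\ge K}\Del_{\om,\ell})+v_K m_\om(B)$ is made $<\ve$ by choosing $K$ from (\ref{UnfT}) first and then shrinking $\ve'$. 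This pullback-intersection step across all $s$ times is the key idea missing from your plan.
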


\begin{proof}
Let $\ve>0$ and $s\in\bbN$ and fix some $\om$. Let $\ve'>0$ (which is yet to be determined), and  $Q_{\sig^j\om,1},...,Q_{\sig^j\om,k_{\sig^j\om}}$, $k_{\sig^j\om}\leq J$ be at most $J$ atoms on $\Del_{\sig^j\om}$ (for $0\leq j<s$), so that 
\[
 m_{\sig^j\om}\left(\Del_{\sig^j\om}\setminus(Q_{\sig^j\om,1}\cup Q_{\sig^j\om,2}\cup\cdots\cup Q_{\sig^j\om,k_{\sig^j\om}})\right)<\ve'
\]
and the $m_{\sig^j\om}$-measure of each $Q_{\sig^j\om,k}$ and of the complement of their union is not less than $\del'$ for some $J$ and $\del'>0$ which depend only on $\ve'$.
We define $A_{\om,1},...,A_{\om,j_\om}$ to be the nonempty cylinders among the cylinder of order $s$  of the form
\[
\bigcap_{i=0}^{s-1}(F_\om^i)^{-1}Q_{\sig^i\om,u_i}
\]
where $u_0,...,u_{s-1}$ are so that $u_i\leq k_{\sig^i\om}$ (note that $j_\om\leq J^s=M$). Set $B=B_\om=\Del_\om\setminus(A_{\om,1}\cup\cdots\cup A_{\om,j_\om})$. Using Lemma \ref{CorDist} and Remark \ref{Rem} we obtain that for each $u_0,...,u_{s-1}$ as above we have
\begin{eqnarray*}
m_\om\left(\bigcap_{i=0}^{s-1}(F_\om^i)^{-1}Q_{\sig^i\om,u_i}\right)\geq \frac{Q^{-1}}{F_\om^{s}x}=
\frac{Q^{-1}}{(f_{\sig^{-\ell}\om}^R)^{s'}x_0}\\\geq Q^{-1-s'}m_{\om}(Q_{\om,u_0})\prod_{j=1}^{s'-1}m_{\sig^{v_j-\ell}\om}(\cA_{\sig^{v_j-\ell}\om}(f_{\sig^{-\ell+v_{j-1}}\om}^{v_j}x_0))
\geq Q^{-s-1}(\del')^s.
\end{eqnarray*}
Here $x=(x_0,\ell)$ is an arbitrary point in the cylinder under consideration, $\ell=\ell_{\om,u_0,...,u
s-1}$ is the level of the cylinder,  $s'\leq s-1$ is the number of returns to the base, $v_0=0$, $v_j=v_{i,\om,u_0,...,u_{s-1}}$, $1\leq j\leq s'$ are the times these returns occur, $\cA_{\om}(y)$ is the atom in $M_\om$ containing $y$  
and we have used that each return happens after the orbit of $x$ visits one the atoms $Q_{\sig^i\om,u_i}$.
 Note that in the above arguments we formally assume that $F_\om^s x$ belongs to $\Del_{\sig^s\om,0}$ for any $x$ in the above cylinder. This is not really a restriction since otherwise we could have  artificially increase the length of the cylinder, as in Remark \ref{Rem}. This does not affect any of the above arguments.
 
Next, set $B=\Del_\om\setminus(A_{\om,1}\cup\cdots\cup A_{\om,j_\om})$. Then
\[
m_\om(B)\geq m_\om\left(\Del_\om\setminus(\cup_{i=1}^{k_\om}Q_{\om,i})\right)\geq\del'.
\]
Since $h_\om$ is uniformly bounded away from $0$, we can find a lower bound $\del$ as desired (which depends on $\ve'$ through $\del'$). Now we will bound the $\tilde m_\om$-measure of $B$ from above. For any integer $K>1$ we have
\begin{eqnarray*}
\tilde m_\om(B)=m_\om (v\bbI_B)\leq \tilde m_\om(\cup_{\ell\geq K}\Del_{\om,\ell})+v_Km_\om(B).
\end{eqnarray*}
Now, let $c>0$ be so that $h_\om\geq c^{-1}$. Then with $Q_\om=Q_{\om,1}\cup Q_{\om,2}\cdots\cup Q_{\om,k_\om}$,  
\[
m_\om(B)\leq c\mu_\om(B)\leq c\sum_{j=0}^{s-1}\mu_\om\left((F_\om^j)^{-1}(Q_{\sig^j\om})\right)
= c\sum_{i=0}^{s-1}\mu_{\sig^i\om}(\Del_{\sig^i\om}\setminus Q_{\sig^i\om})\leq cv_0^{-1}s\ve'.
\]
In the last inequality we have used (\ref{ve '}) with $\ve'$ instead of $\ve$, and that $m_\om=v^{-1}d\tilde m_\om\leq v_0^{-1}\tilde m_0$.
Therefore,
\[
\tilde m_\om(B)\leq  \tilde m_\om(\Del_\om\cup_{\ell\geq K}\Del_{\om,\ell})+v_Kv_0^{-1}cs\ve'.
\]
In order to complete the proof 
we first take $K$ so that $ \tilde m_\om(\cup_{\ell\geq K}\Del_{\om,\ell})<\ve/2$ for a.e. $\om$, and then take $\ve$'s small enough so that $v_Kcs\ve'<v_0\ve/2$.
\end{proof}

We will also need the following
\begin{lemma}\label{Lmix}
Suppose that $\lim_{k\to\infty}d_k=0$. 
Assume also that  (\ref{UnfT}) holds true and that Assumption \ref{AsCov} holds true. 
For any $\ve$ and $s$, let $A_{\om,i}, 1\leq i\leq j_\om\leq M$ be the sets from Proposition \ref{PrpCn1} set $A_{\om,j_\om+1}$ to be the complement of their union. Let $\rho>0$. Then there exists $k_0>s$  which depends only on $\ve,s$ and $\rho$ so that for all $k\geq k_0$, $1\leq i\leq j_\om+1$ and $1\leq u\leq j_{\sig^k\om}+1$ we have
\begin{equation}\label{k geq k0}
\left|\frac{\tilde m_\om\left(A_{\om,i}\cap (F_{\om}^k)^{-1}A_{\sig^k\om,u}\right)}{\tilde m_\om(A_{\om,i}) \mu_{\sig^k\om}(A_{\sig^k\om,u})}-1\right|\leq\rho.
\end{equation}
\end{lemma}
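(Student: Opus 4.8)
The plan is to reduce the claimed approximate-independence \eqref{k geq k0} to the $\alpha$-mixing estimate \eqref{al-mix}, paying attention to the fact that the relevant measure here is $\tilde m_\om=v\,dm_\om$ rather than $\mu_\om$, and that the sets $A_{\om,i}$ (and $A_{\sig^k\om,u}$) are cylinders of a fixed order $s$. First I would observe that, since the $A_{\om,i}$, $1\le i\le j_\om+1$, are (unions of) cylinders of order $s$, we have $A_{\om,i}\in\cA_{\om,s}\subset\cA_{\om,n}$ for every $n\ge s$; in particular, for $k>s$ we may apply Proposition \ref{MixProp} with $n=s$ and $k$ replaced by $k-s$. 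This gives, for any measurable $B\subset\Del_{\sig^k\om}$,
\[
\left|\mu_\om\big(A_{\om,i}\cap (F_\om^{k})^{-1}B\big)-\mu_\om(A_{\om,i})\,\mu_\om\big((F_\om^{k})^{-1}B\big)\right|\leq D d_{k-s}.
\]
Taking $B=A_{\sig^k\om,u}$ and using $(F_\om^k)_*\mu_\om=\mu_{\sig^k\om}$ (Theorem \ref{Thm 2.5 1}) yields
\[
\left|\mu_\om\big(A_{\om,i}\cap (F_\om^{k})^{-1}A_{\sig^k\om,u}\big)-\mu_\om(A_{\om,i})\,\mu_{\sig^k\om}(A_{\sig^k\om,u})\right|\leq D d_{k-s}.
\]

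Next I would transfer this from $\mu_\om$ to $\tilde m_\om$. Since $\mu_\om=h_\om\,dm_\om$ with $c_0\le h_\om\le c_1$ and $\tilde m_\om=v\,dm_\om$, and since $v$ is a fixed deterministic increasing sequence, the two measures $\mu_\om$ and $\tilde m_\om$ are mutually absolutely continuous; however the cylinders $A_{\om,i}$ need not be confined to a single floor of the tower, so the Radon–Nikodym derivative $d\tilde m_\om/d\mu_\om=v/h_\om$ is not constant on $A_{\om,i}$. The point is that the sharp bound \eqref{MIN}–\eqref{MIN} from Proposition \ref{PrpCn1}, namely $m_\om(A_{\om,i})\ge\del$ and $\mu_\om(A_{\om,i})\ge\del$, together with the uniform bound $\tilde m_\om(\Del_\om)\le C_2$ (from \eqref{A1'}, which controls $\sum_\ell v_\ell m_\om(\Del_{\om,\ell})$), gives $\del\le\mu_\om(A_{\om,i})\le c_0^{-1}m_\om(A_{\om,i})$ and $\tilde m_\om(A_{\om,i})\ge c_0\del$; so the denominator $\tilde m_\om(A_{\om,i})\,\mu_{\sig^k\om}(A_{\sig^k\om,u})$ in \eqref{k geq k0} is bounded below by a constant $\del''=\del''(\ve,s)>0$ uniformly in $\om$. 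Writing
\[
\tilde m_\om\big(A_{\om,i}\cap (F_\om^k)^{-1}A_{\sig^k\om,u}\big)
=\mu_\om\!\left(\frac{v}{h_\om}\,\bbI_{A_{\om,i}}\cdot\bbI_{A_{\sig^k\om,u}}\circ F_\om^k\right),
\]
I would decompose $\tfrac{v}{h_\om}\bbI_{A_{\om,i}}$ over the floors, i.e.\ over $A_{\om,i}\cap\Del_{\om,\ell}$ on each of which $v/h_\om$ lies between $c_1^{-1}v_\ell$ and $c_0^{-1}v_\ell$, and apply the $\mu_\om$-version of the mixing estimate to the indicator of each such piece (a cylinder of order $\ge s$, hence measurable in $\cA_{\om,n}$ for $n\ge s$). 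Summing the resulting bounds against the weights $v_\ell\mu_\om(A_{\om,i}\cap\Del_{\om,\ell})$, and using $\sum_\ell v_\ell\mu_\om(\Del_{\om,\ell})\le c_1\sum_\ell v_\ell m_\om(\Del_{\om,\ell})\le c_1 C_2$, produces an estimate of the form
\[
\left|\tilde m_\om\big(A_{\om,i}\cap (F_\om^k)^{-1}A_{\sig^k\om,u}\big)-\tilde m_\om(A_{\om,i})\,\mu_{\sig^k\om}(A_{\sig^k\om,u})\right|\leq D' d_{k-s}
\]
with $D'$ depending only on the structural constants $c_0,c_1,C_2,D$ (not on $\om$, $i$, $u$, or $s$ beyond the shift in the index of $d$).

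Finally, dividing by $\tilde m_\om(A_{\om,i})\,\mu_{\sig^k\om}(A_{\sig^k\om,u})\ge\del''$ and invoking the hypothesis $d_k\to0$, I would choose $k_0>s$ so that $D' d_{k-s}/\del''\le\rho$ for all $k\ge k_0$; since $D'$ is universal and $\del''$ depends only on $\ve$ and $s$, the threshold $k_0$ depends only on $\ve,s,\rho$, as required. I expect the main obstacle to be the bookkeeping in the second paragraph: one must be careful that the floor-wise decomposition of $v/h_\om\bbI_{A_{\om,i}}$ is genuinely a decomposition into $\cA_{\om,s}$-measurable sets (so that Proposition \ref{MixProp} applies to each), and that the error terms, after being weighted by $v_\ell$ and summed over $\ell$, remain controlled by the single quantity $d_{k-s}$ times a constant independent of the (a priori unbounded) number of floors — this is exactly where the summability condition \eqref{A1'} on $(v_\ell m_{\sig^{-\ell}\om}(R_{\sig^{-\ell}\om}\ge\ell))$ is used. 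A minor alternative, avoiding the decomposition entirely, is to note that $v/h_\om\bbI_{A_{\om,i}}\in L^\infty$ only if $A_{\om,i}$ meets finitely many floors, which need not hold; hence the summation argument above, rather than a crude $L^\infty$ bound, seems to be the right route.
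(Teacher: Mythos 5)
The overall strategy is sound and closely parallels the paper's (both reduce the estimate to the $d_k$-decay and use the fact that the denominator in \eqref{k geq k0} is uniformly bounded below). However, the floor-wise decomposition in your second paragraph has a genuine gap. Proposition~\ref{MixProp} gives, for a set $A\in\cA_{\om,s}$, a bound of the form
\[
\Big|\mu_\om\big(A\cap (F_\om^{k})^{-1}B\big)-\mu_\om(A)\mu_{\sig^k\om}(B)\Big|\leq D\,d_{k-s},
\]
and crucially the right-hand side is \emph{uniform in $A$}: it does not scale with $\mu_\om(A)$. (Looking at its proof, $D$ comes from $\|\zeta\|_{Li}\le C_3$ with $\zeta=P_\om^{0,s}(\bbI_A h_\om)$, and $\|\zeta\|_{Li}$ stays bounded but is not small when $\mu_\om(A)$ is small, because $\sup\bbI_A=1$ regardless.) When you decompose $A_{\om,j_\om+1}$ over floors and apply the mixing estimate to each piece $A_{\om,j_\om+1}\cap\Del_{\om,\ell}$, you therefore pick up an error of $O(d_{k-s})$ per floor, and then multiply by roughly $v_\ell$ (your bound $c_0^{-1}v_\ell$ on the Radon--Nikodym derivative). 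The total error is $\sum_\ell v_\ell\cdot O(d_{k-s})$, which diverges since $(v_\ell)$ is increasing and unbounded (indeed $v_\ell=e^{\ve_0\ell}$ in the applications). The summability \eqref{A1'} controls $\sum_\ell v_\ell\,m_\om(\Del_{\om,\ell})$, but it is of no help here because the per-floor mixing error is not proportional to the floor's measure. Your parenthetical ``against the weights $v_\ell\mu_\om(A_{\om,i}\cap\Del_{\om,\ell})$'' is implicitly assuming a $\phi$-mixing-type bound where the error scales with $\mu_\om(A)$, which Proposition~\ref{MixProp} does not provide.

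The missing ingredient --- and exactly what the paper does instead --- is a \emph{truncation} of $v$: replace $v$ by $v^{(\ell)}=v\bbI_{\cup_{j\le\ell}\Del_{\om,j}}$, which contributes a tail error $O(\delta_\ell)$ with $\delta_\ell=\mathrm{ess\mbox{-}sup}_\om\,\tilde m_\om(\cup_{j\ge\ell}\Del_{\om,j})\to0$ by \eqref{UnfT}, and on the truncated part apply $P_\om^{0,s}$ to $v^{(\ell)}\bbI_{A_{\om,i}}$ (Corollary~\ref{CorLY} gives $\|\zeta\|_{Li}\le C(v_\ell)^2$), then the $d_{k-s}$-estimate. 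This yields a total error $O(\delta_\ell)+C(v_\ell)^2 d_{k-s}$, which is handled by the two-step choice: first fix $\ell$ so that $\delta_\ell<\rho/2$, \emph{then} take $k_0>s$ large enough (depending on $\ell$, hence on $\ve,s,\rho$) so that $C(v_\ell)^2 d_{k-s}<\rho/2$ for $k\ge k_0$. Ironically, this is precisely the ``minor alternative'' you dismissed at the end of your write-up; the summation-without-truncation route you favoured is the one that fails.
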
 
\begin{proof}
Since the denominator in the above fraction is bounded from below by some $\del$ which depends only on $\ve$ and $s$ (using that $\tilde m_\om \geq v_0 m_\om$), it is enough to show that the difference between the numerator and the denominator converges to $0$ when $k\to\infty$ uniformly in $\om$, $i$ and $u$. Fix some $k>s$ and some $i$ and $u$ as above. Next, for any $\ell>0$ we have
\[
\tilde m_\om\left(A_{\om,i}\cap (F_{\om}^k)^{-1}A_{\sig^k\om,u}\right)=m_\om\left(v^{(\ell)}\bbI_{A_{\om,i}}\bbI_{A_{\sig^k\om,u}}\circ F_\om^k\right)+O(\del_\ell)
\]
where $\del_\ell=\text{ess-sup}_\om \tilde m_\om(\cup_{j\geq\ell}\Del_{\om,\ell})$ which converges to $0$ as $\ell\to\infty$ and $v^{(\ell)}=v\bbI_{\cup_{j\leq\ell}\Del_{\om,j}}$. Moreover, 
\[
m_\om\left(v^{(\ell)}\bbI_{A_{\om,i}}\bbI_{A_{\sig^k\om,u}}\circ F_\om^k\right)=
m_{\sig^k\om}\left(P_{\sig^s\om}^{0,k-s}(\zeta)\bbI_{A_{\sig^k\om,u}}\right)
\]
where
\[
\zeta=P_\om^{0,s}(v^{(\ell)}\bbI_{A_{\om,i}}).
\]
Using Corollary \ref{CorLY} we have
\[
\|\zeta\|_{Li}\leq C(v_\ell)^2.
\]
Therefore,
\[
m_{\sig^k\om}(|P_{\sig^s\om}^{0,k-s}(\zeta)-m_{\sig^s\om}(\zeta)h_{\sig^k\om}|)\leq C(v_\ell)^2 d_{k-s}.
\]
Notice that 
\[
m_{\sig^s\om}(\zeta)=m_{\om}(v\bbI_{A_{\om,i}})-m_{\om}((v-v^{(\ell)})\bbI_{A_{\om,i}})=
\tilde m_\om(A_{\om,i})+O(\del_\ell).
\]
We conclude that
\[
\left|\tilde m_\om\left(A_{\om,i}\cap (F_{\om}^k)^{-1}A_{\sig^k\om,u}\right)-\tilde m_\om(A_{\om,i}) \mu_{\sig^k\om}(A_{\sig^k\om,u})\right|\leq O(\del_\ell)+C(v_\ell)^2 d_{k-s}.
\]
The proof of the lemma is completed by taking $\ell$ so that $\del_\ell<\rho/2$ and then $k_0>s$ so that
$C(v_\ell)^2 d_{k-s}<\rho/2$ for all $k>k_0$. 
\end{proof}

\subsection{Equvariant complex cones on random towers and the RPF theorem}\label{C-sec}
In this section we will work under Assumption \ref{AsCov}.
Moreover,, we will focus on the exponential case, and assume that there are $c_1,c_2>0$ so that $\bbP$-a.s. for all $n\geq1$ we have
\begin{equation}\label{Exp}
m_\om(R_\om\geq n)\leq c_1e^{-c_2n}.
\end{equation}
In particular by Theorem \ref{AssMixing} the sequence $d_k$ decays exponentially fast to $0$.
In this case we take $v_\ell=e^{\ve_0\ell}$ where $\ve_0<c_2$. Then, it is clear that (\ref{A1}) and (\ref{UnfT}) hold true.

Define the ``weighted" transfer operators $\cL_\om^{z}$, $z\in\bbC$ by $\cL_\om^{z}g=P_\om^{z}(gv)/v$ and for any $n$  set
\[
\cL_\om^{z,n}=\cL_{\sig^{n-1}\om}^{z}\circ\cdots\circ\cL_{\sig\om}^{z}\circ\cL_\om^{z}
\] 
which satisfy $\cL_{\om}^{z,n}g=P_\om^{z,n}(gv)/v$. Then Proposition \ref{Prop LY} means that the operators $\cL_\om^{it,n}$ are continuous with respect to the   norm $\|\cdot\|_{Li}$ (indeed $\|gv\|_\om=\|g\|_{Li,\om}$). Note that $\cL_\om=\cL_{\om}^{0}$ is the dual operators of $F_\om$ with respect to the measures $\tilde m_\om$ and $\tilde m_{\sig\om}$, that is for any bounded function $f$ and integrable function $g$,
\begin{equation}\label{L Du}
\int f\cL_\om g d\tilde m_{\sig\om}=\int g\cdot f\circ F_\om d\tilde m_\om.
\end{equation}

Note also that with $\tilde h_\om=h_\om/v$ we have $\mu_\om=\tilde h_\om d\tilde m_\om$, where $h_\om$ is the random density function of the equivariant measures $\mu_\om$ from Proposition \ref{MixProp}.

Our main goal in this section is to prove the following theorem.
\begin{theorem}\label{RPF}
Suppose that (\ref{Exp}) holds true and that Assumption \ref{AsCov} holds.
There exists a constant $r>0$, which depends only on the initial parameters, so that for every $z\in B(0,r):=\{\zeta\in\bbZ:\,|\zeta|<r\}$
there exist random measurable triplets depending only on the operators $\cL_\om^{z}$
consisting of a nonzero complex number 
$\la_\om(z)$, a complex function $h_\om^{(z)}\in\cH_\om$ and a 
complex continuous linear functional $\nu_\om^{(z)}\in\cH_\om^*$ such that:

(i) For $\bbP$-a.e. $\om$,
 $\la_\om(0)=1$, $h_\om^{(0)}=\tilde h_\om$, $\nu_j^{(0)}=\tilde m_\om$
 and for any $z\in B(0,r)$,
\begin{equation}\label{RPF deter equations-General}
\cL_\om^{z} h_\om^{(z)}=\la_\om(z)h_{\sig\om}^{(z)},\,\,
(\cL_\om^{z})^*\nu_{\sig\om}^{(z)}=\la_\om(z)\nu_{\om}^{(z)}\text{ and }\,
\nu_\om^{(z)}(h_\om^{(z)})=\nu_\om^{(z)}(h_\om^{(0)})=1.
\end{equation} 
When $z=t\in\bbR$ and $|t|<r$ then 
 $\la_\om(t)>a$ for some constant $a$ not depending on $\om$ and $t$. 
 Moreover, $\nu_\om^{(t)}$ is a positive measure (which assigns positive mass to open subsets of $\Del_\om$) and the equality 
$\nu_{\sig\om}^{(t)}\big(\cL_\om^{t} g)=\la_\om(t)\nu_{\om}^{(t)}(g)$ holds true for any 
bounded Borel function $g:\Del_\om\to\bbC$.

(ii) Set $U=B(0,r)$. Then the maps 
\[
\la_\om(\cdot):U\to\bbC,\,\, h_\om^{(\cdot)}:U\to \cH_{\om}\,\,\text{ and }
\nu_\om^{(\cdot)}:U\to \cH_{\om,}^*
\]
are analytic, where $\cH_{\om,}^*$ is the dual of $\cH_{\om}$.  Moreover, there exists a constant $C>0$, which depends only on the initial parameters
such that 
\begin{equation}\label{UnifBound}
\max\big(\sup_{z\in U}|\la_\om(z)|,\, 
\sup_{z\in U}\| h_\om^{(z)}\|_{Li},\, \sup_{z\in U}
\|\nu^{(z)}_\om\|\big)\leq C,
\end{equation}
where $\|\nu\|$ is the 
operator norm of a linear functional $\nu:\cH_\om\to\bbC$.

(iii) There exist constants $A>0$ and $\del\in(0,1)$, 
which depend only on the initial parameters,
 so that $\bbP$-a.s. for any $g\in\cH_\om$
and $n\geq1$,
\begin{equation}\label{Exponential convergence}
\Big\|\frac{\cL_\om^{z,n}g}{\la_{\om,n}(z)}-\nu_\om^{(z)}(g) h^{(z)}_{\sig^n\om}\Big\|_{Li}\leq A\|g\|_{Li}\del^n
\end{equation}
where $\la_{\om,n}(z)=\la_{\om}(z)\cdot\la_{\sig\om}(z)\cdots\la_{\sig^{n-1}}(z)$. 
\end{theorem}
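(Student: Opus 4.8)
The plan is to follow the conic/spectral strategy announced in the introduction: construct an equivariant family of real Birkhoff cones on the towers that is invariant under the real transfer operators, prove a \emph{uniform} (in $\om$) contraction of the associated Hilbert projective metrics after a fixed number of steps, then complexify the picture using Rugh's conic perturbation theory, and finally feed the resulting complex cone contraction into the general random complex Ruelle--Perron--Frobenius machinery of \cite[Ch.~7]{HK} to extract the triplets $(\la_\om(z),h_\om^{(z)},\nu_\om^{(z)})$ together with analyticity, the uniform bounds \eqref{UnifBound} and the exponential convergence \eqref{Exponential convergence}.

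First I would fix the weight $v_\ell=e^{\ve_0\ell}$ with $\ve_0<c_2$, so that \eqref{A1}, \eqref{UnfT} hold and, by Theorem \ref{AssMixing}, $d_k\to0$ exponentially fast. Following the deterministic construction in \cite{M-D}, I would then introduce, for constants $a,b>0$ to be tuned, a cone of the form
\[
\cC_{\om}=\Big\{g:\Del_\om\to\bbR:\ g>0,\ |g|_{\om,\Del_{\om,\ell}}\le a\,v_\ell^{-1}\tilde m_\om(g),\ \sup_{\Del_{\om,\ell}}g\le b\,v_\ell^{-1}\tilde m_\om(g)\ (\forall\ell),\ \inf_{Q_{\om,i}}g\ge b^{-1}v_{\ell_i}^{-1}\tilde m_\om(g)\Big\},
\]
where $Q_{\om,i}=\Del_{\om,\ell_i,j_i}$ are the ``large'' atoms furnished by Assumption \ref{AsCov}; note $\tilde h_\om\in\cC_\om$ by Theorem \ref{Thm 2.5 1}. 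Using the real case ($t=0$) of the Lasota--Yorke estimates of Proposition \ref{Prop LY}, the distortion bound \eqref{Distortion} and Lemma \ref{CorDist}, one sees that $\cL_\om^{0}$ contracts the floor-wise Lipschitz constants and preserves the floor-wise sup bound; the coupling/mixing estimate of Lemma \ref{Lmix}, combined with the partitions of Proposition \ref{PrpCn1} and the exponential decay of $d_k$, shows that after a fixed number $N=N(a,b)$ of steps the mass has been redistributed well enough that $\cL_\om^{0,N}$ maps $\cC_\om$ into $\cC_{\sig^N\om}$ \emph{with strictly smaller constants} — in fact into a subcone of uniformly bounded Hilbert diameter $D<\infty$. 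This last lower-bound-on-the-infimum step is precisely what Assumption \ref{AsCov} is designed to make possible.

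Given the finite diameter $D$, Birkhoff's theorem yields that $\cL_\om^{0,N}$ contracts the Hilbert projective metric of $\cC_\om$ by the uniform factor $\tanh(D/4)<1$, which gives the real RPF data $\la_\om(0)=1$, $h_\om^{(0)}=\tilde h_\om$, $\nu_\om^{(0)}=\tilde m_\om$ (using \eqref{L Du}); the strict positivity of $\cL_\om^{t}$ for small real $t$, together with the continuity of the cone constants in $t$, gives the lower bound $\la_\om(t)>a>0$, the positivity of $\nu_\om^{(t)}$, and the duality $\nu_{\sig\om}^{(t)}(\cL_\om^{t}g)=\la_\om(t)\nu_\om^{(t)}(g)$ first on $\cH_\om$ and then on bounded Borel $g$ by approximation. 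To handle complex $z$ I would extend Proposition \ref{Prop LY} to $z=t+is$ with $|t|$ small — immediate since $|e^{z\varphi_\om}|=e^{t\varphi_\om}$ is uniformly close to $1$ and uniformly Lipschitz, so $\|\cL_\om^{z,n}\|_{Li}$ stays uniformly bounded and $\|\cL_\om^z-\cL_\om^0\|_{Li}\to0$ as $z\to0$, uniformly in $\om,n$ — and then invoke Rugh's complex cone theory \cite{Rug} (see also \cite{Dub1,Dub2}): since $\cL_\om^{0,N}$ maps $\cC_\om$ into a cone of diameter $\le D$, for $r>0$ small (depending only on $D$ and the uniform Lasota--Yorke constants) the perturbed operator $\cL_\om^{z,N}$ maps the canonical complexification $\cC_\om^{\bbC}$ into a complex cone of bounded diameter and strictly contracts the complex Hilbert metric, uniformly in $\om$ and $z\in B(0,r)$. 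Plugging this into the random complex RPF theorem of \cite[Ch.~7]{HK} produces the measurable families $\la_\om(z),h_\om^{(z)}\in\cH_\om,\nu_\om^{(z)}\in\cH_\om^*$ obeying \eqref{RPF deter equations-General}, the uniform bounds \eqref{UnifBound}, analyticity in $z$ (from analyticity of $z\mapsto\cL_\om^z$ into $\mathrm{End}(\cH_\om)$ plus analyticity of the conic perturbation), and \eqref{Exponential convergence} with some $\del\in(0,1)$.

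The step I expect to be the main obstacle is the \emph{uniform} finite-diameter estimate for $\cL_\om^{0,N}\cC_\om$. The tower has countably many floors, each with countably many atoms, and all three cone constraints must be controlled \emph{simultaneously} over the whole infinite tower and over all $\om$: the exponential tails \eqref{Exp} (through the weight $v$ and \eqref{UnfT}) are needed for the high floors, the exponential decay of $d_k$ (Theorem \ref{AssMixing}) is needed to make the coupling of Lemma \ref{Lmix} effective after a number of steps $N$ \emph{independent of} $\om$, and Assumption \ref{AsCov} is exactly what guarantees that after those steps the infimum of the pushed density over a controlled family of atoms is comparable to its integral, uniformly in $\om$ — without it the diameter could degenerate along the base. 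Choosing $a,b,N,r$ so that all three mechanisms cooperate is the technical heart; once that is done, Birkhoff's theorem, Rugh's perturbation theory and the random machinery of \cite{HK} are essentially turnkey.
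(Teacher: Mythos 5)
Your proposal follows the same architecture as the paper's proof: construct equivariant real Birkhoff cones on the towers using the partition/covering structure from Assumption~\ref{AsCov}, prove a uniform projective contraction of $\cL_\om^{0,k}$ after finitely many steps (Proposition~\ref{ConeProp1}), complexify via Rugh's canonical complexification and show $\cL_\om^{z,k}$ is a strict complex cone contraction for $|z|$ small (Theorem~\ref{ConeThm}), and then invoke the random complex RPF machinery of~\cite{HK}. Your cone differs slightly in presentation --- you impose pointwise positivity and infimum lower bounds on the raw atoms $Q_{\om,i}$, whereas the paper works with averages over the refined length-$s$ cylinders from Proposition~\ref{PrpCn1} (whose diameter $\beta^s$ gives extra room in the contraction estimates) together with a sup bound only on the exceptional remainder $A_{\om,j_\om+1}$ --- but these are variants of the same device and the key lemmas you invoke (Propositions~\ref{Prop LY},~\ref{PrpCn1}, Lemma~\ref{Lmix}, Theorem~\ref{AssMixing}) are exactly the ones the paper uses.
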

Note that for any two functions $g:\Del_\om\to\bbR$ and $f:\Del_{\sig^n\om}\to\bbR$ we have
\begin{eqnarray*}
\mu_\om(g\cdot f\circ F_\om^n)=\tilde m_{\sig^n\om}\left(f\cdot \cL_{0}^{\om,n}(g\tilde h_\om)\right)\\=
\mu_\om(g)\mu_{\sig^n\om}(f)+\tilde m_{\sig^n\om}\left(f\left(\cdot \cL_{0}^{\om,n}(g\tilde h_\om-\tilde m_\om(g\tilde h_\om)\tilde h_{\sig^n\om}\right)\right).
\end{eqnarray*}
Therefore, using (\ref{Exponential convergence}) together with $\|\tilde h_\om g\|_{Li}\leq 3\|g\|_{Li}\|\tilde h_\om\|_{Li}\leq C\|g\|_{Li}$, we get that there is a constant $A_0>0$ so that
\begin{equation}\label{ExpCor}
\left|\mu_\om(g\cdot f\circ F_\om^n)-\mu_\om(g)\mu_{\sig^n\om}(f)\right|\leq A_0\|g\|_{Li}\|f\|_{L^1(\mu_{\sig^n\om})}\del^n.
\end{equation}



\subsection{Proof of Theorem \ref{RPF}}

For every $\ve>0$ and $s\geq1$ we consider the partitions $A_{\om,i}$ of $\Del_\om$ from Proposition \ref{PrpCn1}, where $1\leq i\leq j_\om+1$. Let us denote this partition by $\cP_\om(\ve,s)$.
For any $a,b,c>1$ 
let $\cC_{\om,a,b,c}=\cC_{\om,a,b,c,\ve,s}$ be the real cone consisting of all functions $g:\Del_\om\to\bbR$ so that 
\vskip0.2cm
\begin{itemize}
\item
$0\leq\frac{1}{\mu_\om(P)}\int_P gd\tilde m_{\om}\leq a\int gd\tilde m_\om;\,\,\forall\,P\in\cP_{\om}(\ve,s)$.
\\
\item
$|g|_\om=|g|_{\om,\be}\leq b\int gd\tilde m_\om$.
\\
\item
$|g(x)|\leq c\int gd\tilde m_{\om},\,\,\text{for any }\,x\in A_{\om,j_\om+1}$.
\end{itemize}

As in \cite{M-D} we have the following result.
\begin{proposition}
For any $a,b,c>1$, $\ve>0$ $s\in\bbN$  and $\del\in(0,1)$
the real projective diameter of $\cC_{\om,\del a,\del b,\del c,\ve,s}$ inside $\cC_{\om,a,b,c,\ve,s}$ does not exceed a constant $r=r(a,b,c,\del,\ve,s)$ which depends only on $a,b,c,s,\ve$ and $\del$. 
\end{proposition}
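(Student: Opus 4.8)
The plan is to show that the cone $\cC_{\om,\del a,\del b,\del c,\ve,s}$ sits "well inside" $\cC_{\om,a,b,c,\ve,s}$ in the sense of the Hilbert projective metric, and then invoke the general principle (as used in \cite{M-D} and ultimately due to Birkhoff) that a convex cone whose closure, after a uniform contraction of its defining constants, still lands inside the original cone has finite projective diameter therein. Concretely, for two functions $g_1,g_2\in\cC_{\om,\del a,\del b,\del c,\ve,s}$ one must produce real numbers $\al<\be$ such that $g_2-\al g_1$ and $\be g_1-g_2$ both lie in $\cC_{\om,a,b,c,\ve,s}$, and bound $\log(\be/\al)$ uniformly. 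The projective diameter is then the supremum over such pairs of $\log(\be/\al)$. So the first step I would carry out is to normalize: since the cone is scale-invariant, assume $\int g_i\,d\tilde m_\om=1$ for $i=1,2$. Then the three defining inequalities for $g_i$ read: $0\le \mu_\om(P)^{-1}\int_P g_i\,d\tilde m_\om\le \del a$ for all $P\in\cP_\om(\ve,s)$; $|g_i|_\om\le \del b$; and $|g_i(x)|\le \del c$ for $x\in A_{\om,j_\om+1}$.

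Next I would find the candidate $\al$ and $\be$. The natural choice is $\al=\inf_x g_1(x)^{-1}g_2(x)$-type quantities tempered against each of the three constraints, but it is cleaner to take $\al$ and $\be$ directly as the largest/smallest scalars for which the membership conditions survive. The key point is to verify each of the three cone conditions for $g_2-\al g_1$ (and symmetrically for $\be g_1 - g_2$). For the Hölder seminorm condition: $|g_2-\al g_1|_\om \le |g_2|_\om + |\al|\,|g_1|_\om \le \del b(1+|\al|)$, and one needs this $\le b\int(g_2-\al g_1)\,d\tilde m_\om = b(1-\al)$; since $\al$ will be of order $1$ (indeed bounded away from $0$ and $\infty$ by the lower bound $g_i\ge 0$ and the uniform upper bounds), choosing $\del$ small enough absorbs the factor $(1+|\al|)/(1-\al)$. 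The same mechanism handles the sup-bound on $A_{\om,j_\om+1}$ and the averaged-lower/upper bounds over $P\in\cP_\om(\ve,s)$: in each case the perturbed constant is $\del\times(\text{bounded expression})$ and one needs it below the unperturbed constant, which holds once $\del$ is small, uniformly since all bounds depend only on $a,b,c,s,\ve$. Crucially, positivity $g_2-\al g_1\ge 0$ must be arranged; here one uses that the averaged lower bounds over the partition $\cP_\om(\ve,s)$ together with the Hölder control and the distortion estimates of Lemma \ref{CorDist} force $g_1$ to be bounded \emph{below} by a positive constant depending only on the parameters (this is where the partition structure from Proposition \ref{PrpCn1}, with $\mu_\om(P)\ge\del$, enters), so that a quotient $g_2/g_1$ is well-defined and bounded.

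The main obstacle, and the step deserving the most care, is the positivity/lower bound argument: one must show that membership in $\cC_{\om,a,b,c,\ve,s}$ with $\int g\,d\tilde m_\om=1$ forces $\inf g \ge \kappa$ for some $\kappa=\kappa(a,b,c,s,\ve)>0$. On each atom $P\in\cP_\om(\ve,s)$ other than the "junk" set $A_{\om,j_\om+1}$, the averaged value of $g$ is between $0$ and $\del a$, but one needs it bounded \emph{away from} $0$; this follows because the averaged values over all the atoms $P$ must sum (weighted by $\tilde m_\om(P)$) to $1$, while the junk atom has small $\tilde m_\om$-measure ($<\ve$, times the weight $v$) and $g$ there is controlled by $\del c$, so a definite fraction of the mass sits on the good atoms, each of which has $\mu_\om$- and $m_\om$-measure $\ge\del$; combining with the Hölder seminorm bound $|g|_\om\le b$ (which controls oscillation of $g$ within each atom in the metric $d_\om$) upgrades the averaged lower bound to a pointwise one on the good atoms, and then the dynamics (every point eventually enters a good region, via the return-time structure) propagates it. Once this uniform two-sided pointwise control $\kappa\le g\le K$ is in hand for every normalized $g$ in the big cone, the diameter bound is essentially the classical computation: $\log(\be/\al)$ is controlled by $\log(K/\kappa)$ plus correction terms from the seminorm and averaged conditions, all of which are uniform in $\om$. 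I would close by noting that all constants produced depend only on $a,b,c,s,\ve$ and the fixed choice of $\del$, hence $r=r(a,b,c,\del,\ve,s)$ as claimed, with no dependence on $\om$.
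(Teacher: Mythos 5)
Your overall strategy---normalize $\int g_i\,d\tilde m_\om=1$ and bound $\alpha,\beta$ by verifying the three cone-defining inequalities for $g_2-\alpha g_1$ and $\beta g_1-g_2$---is the right framework, and your handling of the H\"older and junk-atom conditions (where the slack factor $\del$ in the smaller cone produces the needed room) is sound. However, the step you flag as ``deserving the most care'' is in fact wrong: membership in $\cC_{\om,\del a,\del b,\del c,\ve,s}$ with $\int g\,d\tilde m_\om=1$ does \emph{not} force a pointwise lower bound $\inf g\ge\kappa>0$. The first cone inequality as written only demands $\mu_\om(P)^{-1}\int_P g\,d\tilde m_\om\ge 0$, never that it be bounded away from zero. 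Your aggregate argument (``a definite fraction of the mass sits on the good atoms'') shows at best that \emph{some} good atom carries positive average, not every one, and the closing appeal to ``the dynamics\dots propagates it'' and to Lemma~\ref{CorDist} is a non sequitur---this proposition is a purely static statement about the projective geometry of two cones, with no transfer operator or iteration anywhere in sight. The remark ``choosing $\del$ small enough absorbs the factor'' is also off: $\del$ is a given datum of the proposition, not a free parameter, and the bound $r$ must be produced for each fixed $\del$.

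This also exposes a genuine defect in the proposition as literally stated. If the first cone condition really is one-sided, the result is false: take $g_1=\tilde h_\om$ and a normalized $g_2\in\cC_{\om,\del a,\del b,\del c,\ve,s}$ with $\int_{P_1}g_2\,d\tilde m_\om=0$ on a single good atom $P_1$ (such $g_2$ exist once $\del a,\del b,\del c$ are large enough to make the cone nontrivial). For $g_2-\lambda g_1$ to lie in $\cC_{\om,a,b,c,\ve,s}$ one needs $\int_{P_1}(g_2-\lambda g_1)\,d\tilde m_\om=-\lambda\mu_\om(P_1)\ge 0$, forcing $\lambda\le 0$; hence $\alpha(g_1,g_2)=0$ and the Hilbert distance is $+\infty$. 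The cone one actually needs (matching the construction in~\cite{M-D}) should carry a \emph{two-sided} first condition, e.g.\ $a^{-1}\int g\,d\tilde m_\om\le\mu_\om(P)^{-1}\int_P g\,d\tilde m_\om\le a\int g\,d\tilde m_\om$. With that in place the proof is a clean finite computation in linear inequalities along the lines you set up, and the pointwise positivity you try to derive is neither needed nor available: one bounds $\alpha$ and $\beta$ directly by requiring each family of defining functionals (atom averages from above and from below, H\"older seminorm, junk-atom sup) to map $g_2-\alpha g_1$ and $\beta g_1-g_2$ into the allowed ranges, using only the three inequalities for $g_1,g_2$ in the contracted cone.
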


The next step in the proof of Theorem \ref{RPF} is the following result.

\begin{proposition}\label{ConeProp1}
Suppose that (\ref{Exp}) holds true and that Assumptions \ref{MixProp} and \ref{AsCov} are satisfied.
Then there are $\ve>0$, $s,k_1\in\bbN$, $a,b,c>1$ and $\del\in(0,1)$ so that for $\bbP$-a.a. $\om$ and $k\geq k_1$ we have
\begin{equation}\label{ConInv}
\cL_\om^{0,k}\cC_{\om,a,b,c,\ve,s}\subset\cC_{\sig^k\om,\del a,\del b,\del c,\ve,s}.
\end{equation}
\end{proposition}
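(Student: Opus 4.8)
The plan is to verify the three defining conditions of the cone $\cC_{\sig^k\om,\del a,\del b,\del c,\ve,s}$ for $\cL_\om^{0,k}g$ whenever $g\in\cC_{\om,a,b,c,\ve,s}$, choosing the parameters in the right order. First I would record the basic facts we have at our disposal: $\cL_\om^{0,k}$ preserves $\tilde m_\om$-integrals, i.e. $\int \cL_\om^{0,k}g\, d\tilde m_{\sig^k\om}=\int g\, d\tilde m_\om$ (from the duality \eqref{L Du}); the density $\tilde h_\om=h_\om/v$ is bounded above and below; the Lasota--Yorke bound of Proposition \ref{Prop LY} / Corollary \ref{CorLY} controls $|\cL_\om^{0,k}g|_{\sig^k\om}$ in terms of $\be^k|g|_\om$ plus a multiple of $\int |g|\,d m_\om$; and the mixing estimates — Proposition \ref{MixProp} together with the quantitative version Lemma \ref{Lmix}, which is exactly tailored to the partition $\cP_\om(\ve,s)$ from Proposition \ref{PrpCn1}. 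The exponential-tails hypothesis \eqref{Exp} enters only through Theorem \ref{AssMixing}, guaranteeing $d_k\to0$ exponentially, so that Lemma \ref{Lmix} applies.

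The main work is the first (``averaging'') condition. For $P\in\cP_{\sig^k\om}(\ve,s)$ I would write $\frac{1}{\mu_{\sig^k\om}(P)}\int_P \cL_\om^{0,k}g\, d\tilde m_{\sig^k\om}$, use duality to turn it into $\frac{1}{\mu_{\sig^k\om}(P)}\int g\cdot \bbI_P\circ F_\om^k\, d\tilde m_\om$, and then decompose $\Del_\om$ according to which atom $A_{\om,i}$ of $\cP_\om(\ve,s)$ one sits in. On each ``good'' atom $A_{\om,i}$ ($i\leq j_\om$) the averaging bound $\frac{1}{\mu_\om(A_{\om,i})}\int_{A_{\om,i}}g\,d\tilde m_\om\le a\int g\,d\tilde m_\om$ combined with Lemma \ref{Lmix} (with $\rho$ small, $k\ge k_0$) gives a contribution close to $\frac{\mu_\om(A_{\om,i})}{\dots}\cdot(\text{something}\le a)$, and summing over $i$ yields a bound of the form $(1+O(\rho))\cdot(\text{average of the per-atom bounds})$, which one wants to be $\le \del a\int g\,d\tilde m_\om$. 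On the ``bad'' atom $A_{\om,j_\om+1}$, which has small $\tilde m_\om$-measure ($<\ve$) and on which $|g|\le c\int g\,d\tilde m_\om$, the contribution is $O(\ve c)\int g\, d\tilde m_\om$. So the target inequality becomes roughly $(1+O(\rho))a' + O(\ve c)\le \del a$ where $a'$ is a weighted average of $a$'s; the point is that the ``honest'' part does not grow (it is a convex-combination-type bound of numbers $\le a$, times $1+O(\rho)$) and the error is $O(\ve c)$, so by first fixing $a,c$, then taking $\ve$ small and $\rho$ small (hence $k_1$ large) and $\del$ slightly less than $1$ but $>$ these combined factors, one closes it. Positivity of $\frac{1}{\mu_{\sig^k\om}(P)}\int_P\cL_\om^{0,k}g$ is immediate since $\cL_\om^{0,k}$ is a positive operator and $g\ge0$ (this follows from the cone conditions, as $g(x)\le$ multiple of $\int g$ forces $g\ge0$ only where controlled — more precisely one uses that elements of the cone are non-negative, which is built into the first displayed inequality $0\le\frac1{\mu_\om(P)}\int_P g$).

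For the second (``regularity'') condition I would apply Corollary \ref{CorLY}, or rather the Lasota--Yorke inequality directly to $\cL_\om^{0,k}(gv)/v$: $|\cL_\om^{0,k}g|_{\sig^k\om}\le \be^k|g|_\om + C\cdot C_2\int|g|\,dm_\om$ (constants from Proposition \ref{Prop LY}(ii)). Since $g\ge0$, $\int|g|\,dm_\om\le v_0^{-1}\int g\,d\tilde m_\om$, and $|g|_\om\le b\int g\,d\tilde m_\om$, this gives $|\cL_\om^{0,k}g|_{\sig^k\om}\le(\be^k b + C')\int g\,d\tilde m_{\sig^k\om}$; choosing $b$ large relative to $C'$ and $k_1$ with $\be^{k_1}$ small makes $\be^k b + C'\le \del b$. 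The third (``sup on the bad atom'') condition is similar: bound $|\cL_\om^{0,k}g(x)|$ for $x\in A_{\sig^k\om,j_{\sig^k\om}+1}$ using Proposition \ref{Prop LY} — the contribution of cylinders of length $k$ reaching $x$ — in terms of $\int|g|\,dm_\om$ and $\be^k|g|_\om$, landing on $(\text{small})\cdot c\int g\,d\tilde m_{\sig^k\om}$ once $c$ is large and $k_1$ large; here one must be a little careful because $x$ lies on some floor $\ell$ which may exceed or be less than $k$, so one splits into the two cases of Proposition \ref{Prop LY}(i)/(ii) exactly as in that proof.

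The delicate point — and the step I expect to be the main obstacle — is the bookkeeping in the averaging condition: one needs the implied constant in ``$(1+O(\rho))\cdot(\text{weighted average of }a)$'' to be genuinely a weighted average (convex combination) so that it stays $\le a$ and does not accumulate over the $\le M$ atoms, and for this the normalization $\mu_{\sig^k\om}(P)$ in the denominator must be matched against $\sum_i \mu_\om(A_{\om,i})\cdot(\text{Lemma }\ref{Lmix}\text{ ratios})\approx \mu_{\sig^k\om}(P)\sum_i\mu_\om(A_{\om,i})\le \mu_{\sig^k\om}(P)$; getting this telescoping right, uniformly in $\om$, $P$, $i$, is where the quantitative mixing Lemma \ref{Lmix} is essential and where the order of quantifiers (fix $a,b,c$; then $\ve,s$; then $\rho$ and $k_1$; then $\del$) has to be chosen so that every inequality closes simultaneously — this is the heart of the argument and mirrors the corresponding construction in \cite{M-D}.
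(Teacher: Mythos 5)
Your overall architecture matches the paper's: split into the three cone inequalities, handle the averaging condition via the decomposition over the partition $\cP_\om(\ve,s)$ and the quantitative mixing of Lemma~\ref{Lmix}, handle the other two via the Lasota--Yorke estimates of Proposition~\ref{Prop LY}, and close by choosing parameters in a suitable order. But there is a genuine gap that runs through the proposal: you repeatedly invoke that elements $g\in\cC_{\om,a,b,c,\ve,s}$ are pointwise nonnegative, and they are not. The first cone inequality only says $0\le\frac1{\mu_\om(P)}\int_P g\,d\tilde m_\om$ for $P\in\cP_\om(\ve,s)$, i.e.\ the \emph{averages over atoms} are nonnegative; combined with the Lipschitz bound $|g|_\om\le b\int g\,d\tilde m_\om$ and the sup bound on the bad atom, $g$ can easily take values as negative as $-b\be^s\int g\,d\tilde m_\om$ on a good atom and as negative as $-c\int g\,d\tilde m_\om$ on the bad atom.

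This false premise is load-bearing in two places. First, in the second and third conditions you replace $\int|g|\,dm_\om$ by $v_0^{-1}\int g\,d\tilde m_\om$ ``since $g\ge0$''. Without pointwise nonnegativity one needs the nontrivial estimate that the paper proves at \eqref{Intest},
\[
\int|g|\,d\tilde m_\om\ \le\ c_0\bigl(\ve c + b\be^s + D_0 a\bigr)\int g\,d\tilde m_\om,
\]
obtained by combining the averaging condition (contributing $D_0 a$), the Lipschitz bound (contributing $b\be^s$), and the bad-atom bound (contributing $\ve c$). The $D_0 a$ term does \emph{not} go away as $\ve\to0$, $s\to\infty$, and it is precisely what forces the constraints $CD_0 a<b/4$ and $aCQD_0<c/4$ in the paper, i.e.\ that $b/a$ and $c/a$ be large; your simplified bound erases this dependence and therefore the quantifier scheduling you propose (``fix $a,b,c$, then $\ve,s$, then $\rho,k_1$, then $\del$'') is not justified by your own estimates. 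Second, you claim positivity of $\frac1{\mu_{\sig^k\om}(P)}\int_P\cL_\om^{0,k}g$ is ``immediate since $\cL_\om^{0,k}$ is a positive operator and $g\ge0$''; it isn't, and the paper has to prove the lower bound
\[
(1-\rho)\bigl(1-c\ve-(1+\rho)b\be^s-2(1+\rho)c\ve\bigr)\int g\,d\tilde m_\om\ \le\ \frac1{\mu_{\sig^k\om}(P)}\int_P\cL_\om^{0,k}g\,d\tilde m_{\sig^k\om},
\]
whose positivity then needs $\ve c$ and $b\be^s$ to be small (again: $\ve$ small \emph{after} $c$ and $b$ are fixed, not before). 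So the ``averaging'' part of your argument is structurally right, but the lower bound is a real step, not a triviality, and the upper bounds in parts two and three require the paper's integral estimate rather than the nonnegativity shortcut. If you correct the estimate of $\int|g|\,d\tilde m_\om$ via \eqref{Intest} and drop the $g\ge0$ claims, the argument falls back into the paper's proof.
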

In fact if $\ve$ is small enough and $s,k$, $a$ $b/a$ and $c/a$ are large enough we can find $k_1$ so that (\ref{ConInv}) holds true for $\bbP$-a.a. $\om$ and $k\geq k_1$ with $\del=1/2$.

\begin{proof}
Let $\ve>0$, $s,k\in\bbN$, $a,b,c>1$ and $g\in\cC_{\om,a,b,c,\ve,s}$.
In order to show that $\cL_\om^k g=\cL_\om^{0,k}g$ satisfies the first desired condition, for any $P=A_{\sig^k\om,q}\in\cP_{\sig^k\om}$, $1\leq q\leq j_{\sig^k\om}+1$ we first write
\begin{eqnarray*}
\frac {1}{\mu_{\sig^k\om}(P)}\int_P\cL_\om^k gd\tilde m_{\sig^k\om}=\frac {1}{\mu_{\sig^k\om}(P)}\int_{(F_\om^k)^{-1}P}gd\tilde m_{\om}\\=\sum_{i=1}^{j_\om}\frac {1}{\mu_{\sig^k\om}(P)}\int_{A_{\om,i}\cap(F_\om^k)^{-1}P}gd\tilde m_{\om}+\frac {1}{\mu_{\sig^k\om}(P)}\int_{A_{\om,j_\om+1}\cap(F_\om^k)^{-1}P}gd\tilde m_{\om}.
\end{eqnarray*}
Next, let $\rho\in(0,1)$. Given $\ve,s$ and $\rho$ by Lemma \ref{Lmix} there is $k_0=k_0(\ve,s,\rho)$ so that (\ref{k geq k0}) holds true for any $k>k_0$. Using that $g\in\cC_{\om,a,b,c,\ve,s}$ and some standard estimates we obtain exactly as in the proof of \cite[Proposition 3.7]{M-D} that for all $1\leq i\leq j_\om$,
\[
\frac {1}{\mu_{\sig^k\om}(P)}\int_{A_{\om,i}\cap(F_\om^k)^{-1}P}gd\tilde m_{\om}\leq (1+\rho)\left(\int_{A_{\om,i}} gd\tilde m_\om+b\beta^s\tilde m_\om(A_{\om,i})\int gd\tilde m_\om\right)
\]
and 
\[
(1-\rho)\Big(\int_{A_{\om,i}} gd\tilde m_\om-(1+\rho)b\beta^s\tilde m_\om(A_{\om,i})\Big)\int gd\tilde m_\om\big)\leq \frac {1}{\mu_{\sig^k\om}(P)}\int_{A_{\om,i}\cap(F_\om^k)^{-1}P}gd\tilde m_{\om}.
\]
Moreover,
\begin{eqnarray*}
(1-\rho)\int_{A_{\om,j_\om+1}}gd\tilde m_\om-2c(1+\rho)\ve\int gd\tilde m_\om\leq\frac {1}{\mu_{\sig^k\om}(P)}\int_{A_{\om,j_\om+1}\cap(F_\om^k)^{-1}P}gd\tilde m_{\om}\\\leq (1+\rho)c\ve\int gd\tilde m_\om.
\end{eqnarray*}
Observe that 
\[
\int_P\cL_\om^k gd\tilde m_{\sig^k\om}=\int_{(F_\om^k)^{-1}P}gd\tilde m_\om.
\]
Therefore, by spiting the above integral  according to the partition $A_{\om,i}$ and
summing these inequalities we get
\begin{eqnarray*}
(1-\rho)\left(1-c\ve-(1+\rho)\be^s b-2(1+\rho)c\ve\right)\int gd\tilde m_\om\leq \frac {1}{\mu_{\sig^k\om}(P)}\int_P\cL_\om^k gd\tilde m_{\sig^k\om}\\\leq \left(1+\rho)(1+\be^sb+c\ve\right)\int gd\tilde m_\om
\end{eqnarray*}
Since 
\[
\int gd\tilde m_\om=\int  \cL_\om^k gd\tilde m_{\sigma^k\om}
\]
for any given $\del,a,b$ and $c$ so that $\del a>1$,
we get that the function $\cL_\om^k g$ would satisfy the first condition in the definition of the cone $\cC_{\sig^k\om,\del a,\del b,\del c,\ve,s}$ if $\ve,\be^s$ and $\rho$ are small enough and $k>k_0(\ve,s,\rho)$ (so far when $\del=1/2$ our only restriction is that $a,b,c$ are large enough).

Now we will verify the second condition. Let $x=(x,\ell),y=(y,\ell)\in\Del_{\sig^k\om}$. If $k\leq \ell$ then 
\begin{eqnarray*}
|\cL_\om^kg(x,\ell)-\cL_\om^kg(y,\ell)|=v_{\ell-k}|g(x,\ell-k)-g(y,\ell-k)|/v_\ell\\=e^{-\ve_0 k}|g(x,\ell-k)-g(y,\ell-k)|\leq e^{-\ve_0 k}\be^{k} |g|_\be d_{\sig^k\om}(x,y).
\end{eqnarray*}
If $k>\ell$ then with $g_v=vg$ by (\ref{LY2.2}) we have
\begin{eqnarray*}
|\cL_\om^kg(x,\ell)-\cL_\om^kg(y,\ell)|=e^{-\ve_0\ell}|P_\om^{0,k}g_v(x,\ell)-P_\om^{0,k}g_v(y,\ell)|\\\leq e^{-\ve_0\ell}Q(C_1+2\be^{-1})(\|g\|_{L^1(\tilde m_\om)}+C_2\be^k|g|_\be)d_{\sig^k\om}(x,y)
\end{eqnarray*}
where we have used that $\|gv\|_s=\|g\|_\infty$, $\|gv\|_h=|g|_\om$ and 
\[
\int |gv|d m_\om=\int |g|d\tilde m_\om.
\]
Observe that 
\[
\int_{A_{\om,j_\om+1}}|g|d\tilde m_\om\leq \|g\bbI_{A_{\om,j_\om+1}}\|_\infty\tilde m_\om(A_{\om,j_\om+1})\leq \ve c\int gd\tilde m_\om.
\]
Moreover, for any $1\leq i\leq j_\om$ and $x\in A_{\om,i}$ we have 
\begin{equation}\label{Est Q}
\left|g(x)-\frac1{\tilde m_\om(A_{\om,i})}\int_{A_{\om,i}}gd\tilde m_\om\right|\leq |g|_\om\be^s\leq b\be^s\int gd\tilde m_\om
\end{equation}
since the diameter of $Q_{\om,i}$ does not exceed $\be^s$. Notice that 
\[
\frac1{\tilde m_\om(A_{\om,i})}\leq \frac {D_0}{ \mu_\om(A_{\om,i})}
\]
for some constant $D_0$. Indeed,
\[
\mu_\om(A_{\om,i})=m_\om(\bbI_{A_{\om,i}}/h_\om)\leq cm_\om(A_{\om,i})\leq c\tilde m_\om(A_{\om,i})
\]
where $c>0$ satisfies $h_\om\geq c^{-1}>0$. Therefore, 
\[
\|g\bbI_{A_{\om,i}}\|_\infty\leq D_0\frac1{\mu_\om(A_{\om,i})}\int_{A_{\om,i}}gd\tilde m_\om+ b\be^s\int gd\tilde m_\om\leq (D_0a+ b\be^s)\int gd\tilde m_\om.
\]
Hence, 
\begin{eqnarray}\label{Intest}
\int|g|d\tilde m_\om=\sum_{i=1}^{j_\om}\int_{A_{\om,i}}|g|d\tilde m_\om+\int_{A_{\om,j_\om+1}}|g|d\tilde m_\om\\\leq\sum_{i=1}^{j_\om}\tilde m_\om(A_{\om,i})(D_0a+ b\be^s)\int gd\tilde m_\om+
\ve c \tilde m_\om(A_{\om,j_om+1})gd\tilde m_\om\nonumber\\
\leq c_0(\ve c+b\be^s+D_0a)\int gd\tilde m_\omega\nonumber
\end{eqnarray}
where $c_0=\text{ess-sup }\tilde m_\om(\Del_\om)<\infty$.
We conclude that when $k>\ell$ then
\[
|\cL_\om^kg(x,\ell)-\cL_\om^kg(y,\ell)|\leq C(D_0a+ b\be^s+b\be^k+c\ve)\int gd\tilde m_\om \,\cdot d_{\sig^k\om}(x,y)
\]
for some $C>0$ which does not depend on $\om,\ve,s,k,\rho,a,b$ and $c$. If we take $a$ and $b$ so that
$CD_0a<b/4$ and then $\ve$ small enough and $k$ and $s$ large enough so that $b/4+Cb(\be^s+\be^k)+c\ve<b/4$
then the constant on the above right hand side does not exceed $b/2$. 

So far we have shown $\cL_\om^kg$ satisfies the first two conditions defining $\cC_{\sig^k\om,\del a,\del b, \del c,\ve,s}$ with $\del=1/2$ if $k$ and $s$ are large enough, $\ve$ is small enough (uniformly in $\om$) and $CD_0a<b/4$. Now we will show that for many choices of parameters the third condition also holds true. Let $(x,\ell)\in A_{\sig^k\om, j_{\sig^k\om}+1}$. If $k>\ell$ then 
\[
|\cL_\om^k g(x,\ell)|=e^{-\ve_0k}|g(x,k-\ell)|.
\] 
The above arguments show that, in fact $|g|\leq E\int gd\tilde m_\om$  for some constant $E>0$ (the values of $|g|$ on $QA_{\om,i}$ for $1\leq i\leq j_\om$ are estimated using (\ref{Est Q}) and what proceeds it). Therefore,
\[
|\cL_\om^k g(x,\ell)|\leq Ee^{-\ve_0 k}\int gd\tilde m_\om<\frac12 a\int gd\tilde m_\om
\]
if $k$ is large enough. Assume now that $k\leq \ell$. Then
\[
|\cL_\om^k g(x,\ell)|=e^{-\ell v_0}|P_\om^{0,k}g_v(x,\ell)|.
\]
Using (\ref{LY2.1}) we have 
\[
|P_\om^{0,k}g_v(x,\ell)|\leq Q\left(\int |g|d\tilde m_\om+\be^kC_2|g|_\om\right).
\]
Using (\ref{Intest}), we see that if also $aCQD_0<c/4$, $\ve$ is small enough and $k$ and $s$ are large enough then 
\[
\sup_{x\in A_{\sig^k\om,j_{\sig^k\om}+1}}|\cL_\om^k g(x)|\leq \frac12 c\int gd\tilde m_\om=
 \frac12 c\int \cL_\om^k gd\tilde m_{\sig^k\om}.
\]
and we conclude that the proposition holds true with $\del=1/2$ for a.e. $\om$, whenever $\ve$ is small enough and $s,k$, $b/a$ and $c/a$ are large enough.
\end{proof}

Let $a,b,c,\ve,s,k_1$ and $\del$ satisfy (\ref{Const M}) for any $k\geq k_1$. Set $\cC_\om=\cC_{\om,a,b,c,s,\ve}$, and denote by $\cC_{\om,\bbC}$ the canonical complexification\footnote{We refer to \cite{Rug} for the definition of a canonical complexification. See also \cite[Appendix A]{HK} for a summary of all the properties of real and complex cones which will be used in what follows.} of the real cone $\cC_\om$. 
The proof of Theorem \ref{RPF} is completed by applying the following theorem together with \cite[Theorem 4.1]{HK} and \cite[Theorem 4.2]{HK}.

\begin{theorem}\label{ConeThm}
Suppose that (\ref{Exp}) and  hold true. Then,
if $a, b/a$ and $c/a$ are large enough then the following holds true:

(i) The cone $\cC_{\om,\bbC}$ is linearly convex, and it contains the functions $\tilde h_\om=h_\om/v$ and $\textbf{1}$ (the function which takes the constant value $1$). Moreover,  the measure $\tilde m_\om$, when viewed as a linear functional, is a member of the dual complex cone $\cC_{\om,\bbC}^*$ and
the cones  $\cC_{\om,\bbC}$ and $\cC_{\om,\bbC}^*$ have bounded aperture. In fact,
there exist constants $K,M>0$ so that for any $f\in\cC_{\om,\bbC}$ and $\mu\in\cC_{\om,\bbC}^*$, 
\begin{equation}\label{aperture0}
\|f\|\leq K|\tilde m_\om(f)|
\end{equation}
and
\begin{equation}\label{aperture}
\|\mu\|\leq M|\mu(\tilde h_\om)|.
\end{equation}
Here $\|f\|=\|f\|_{Li}$ and $\|\mu\|$ is the corresponding operator norm (all of the above hold true $\bbP$-a.s. and the constant do not depend on $\om$).

(ii) The cone $\cC_{\om,\bbC}$ is reproducing. In fact, there exists a constant $K_1$ so that $\bbP$-a.s. for every $f\in\cH_\om$ bounded there exists $R(f)\in\bbC$ such that $|R(f)|\leq K_1\|f\|$
and 
\[
f+R(f)\tilde h_\om \in\cC_{\om,\bbC}.
\]

(iii) There exist  constants $r>0$ and $d_1>0$ so that $\bbP$-a.s. for every  complex number $z$ with $|z|<r$ and $k_1\leq k\leq 2k_1$, where $k_1$ comes from Proposition \ref{ConeProp1}, we have 
\[
\cL_\om^{z,k}\cC_{\om,\bbC}'\subset\cC_{\sig^k\om,\bbC}'
\]
and 
\[
\sup_{f,g\in\cC'_{\om,\bbC}}\del_{\cC_{\sig^k\om,\bbC}}(\cL_\om^{z,k}f,\cL_\om^{z,k}g)\leq d_1
\]
where $\cC'=\cC\setminus\{0\}$ for any set of functions, and $\del_{\cC_{\sig^k\om,\bbC}}$ is the complex projective metric corresponding to the complex cone  $\cC_{\sig^k\om,\bbC}$ (see  \cite[Appendix A]{HK}).
\end{theorem}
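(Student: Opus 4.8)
The statement splits into three essentially independent claims, which I would establish in the order (i), (ii), (iii), since (iii) will use the geometric information produced in (i)--(ii). The guiding principle throughout is that everything reduces to \emph{uniform} (in $\om$) estimates, because all constants appearing in Proposition \ref{PrpCn1}, Proposition \ref{ConeProp1}, Lemma \ref{CorDist}, and the Lasota--Yorke bounds in Proposition \ref{Prop LY} were arranged to be deterministic. So the strategy is: verify each geometric property of $\cC_{\om,\bbC}$ fiberwise with constants that manifestly do not see $\om$, and then quote Rugh's complexification machinery (as summarized in \cite[Appendix A]{HK}) to pass from the real cone $\cC_\om$ to its canonical complexification $\cC_{\om,\bbC}$.

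\medskip\noindent\textbf{Part (i): linear convexity and bounded aperture.} First I would check that $\tilde h_\om=h_\om/v$ and $\mathbf 1$ lie in $\cC_\om$ once $a,b/a,c/a$ are large: the first cone condition for $\tilde h_\om$ is just $0\le \mu_\om(P)^{-1}\int_P \tilde h_\om\, d\tilde m_\om = 1 \le a\int \tilde h_\om\,d\tilde m_\om = a$, which holds for $a>1$ after noting $\int \tilde h_\om d\tilde m_\om=\mu_\om(\Del_\om)=1$; the Hölder bound $|\tilde h_\om|_\om\le c_2$ (from Theorem \ref{Thm 2.5 1}, rescaled by $v$) forces $b$ large, and the sup bound on $A_{\om,j_\om+1}$ forces $c$ large, all with constants from Theorem \ref{Thm 2.5 1} and the uniform lower bound $\inf h_\om\ge c_0$. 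The function $\mathbf 1$ is handled the same way since $\int \mathbf 1\, d\tilde m_\om = \tilde m_\om(\Del_\om) \in [v_0, c_0]$ is bounded above and below uniformly, and $|\mathbf 1|_\om=0$. For linear convexity of $\cC_{\om,\bbC}$ and the aperture bounds \eqref{aperture0}, \eqref{aperture}: the real cone $\cC_\om$ is cut out by finitely many (at most $M+1$, uniformly!) linear inequalities of the form ``a positive linear functional dominates $c\cdot$(another positive linear functional)'', plus a Hölder-seminorm inequality; by the general theory (\cite{Rug}, \cite[Appendix A]{HK}) such a cone has a linearly convex complexification, and its aperture is controlled by how strictly $\tilde m_\om$ sits in the interior of $\cC_\om^*$ and how the $\|\cdot\|_{Li}$-norm is dominated by the defining functionals. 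Concretely, $\|f\|_{Li}=\|f\|_\infty+|f|_\om$: on $\cup_{i\le j_\om}A_{\om,i}$ the sup of $|f|$ is controlled by $(D_0 a + b\be^s)\tilde m_\om(f)$ via the estimate \eqref{Est Q} and what follows it in the proof of Proposition \ref{ConeProp1}; on $A_{\om,j_\om+1}$ it is $\le c\,\tilde m_\om(f)$; and $|f|_\om\le b\,\tilde m_\om(f)$; all bounds are $\om$-free, giving \eqref{aperture0}. For \eqref{aperture}, dual bounded aperture follows from the existence of a \emph{uniformly interior} point of $\cC_\om$, which $\tilde h_\om$ provides since $\inf \tilde h_\om\ge c_0/v_0>0$: a linear functional $\mu$ in $\cC_{\om,\bbC}^*$ must have $|\mu(\tilde h_\om)|$ comparable to $\|\mu\|$ because $\tilde h_\om$ is boundedly far from the boundary of $\cC_\om$.

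\medskip\noindent\textbf{Part (ii): reproducing.} Given bounded $f\in\cH_\om$, I want $R(f)\in\bbC$ with $|R(f)|\le K_1\|f\|_{Li}$ and $f+R(f)\tilde h_\om\in\cC_{\om,\bbC}$. The standard move is to take $R(f)$ a large real multiple of $\|f\|_{Li}$: since $\tilde h_\om$ is a uniformly interior point of $\cC_\om$, there is a fixed $\rho_0>0$ (independent of $\om$, by the uniform constants of part (i)) so that the $\|\cdot\|_{Li}$-ball of radius $\rho_0$ about $\tilde h_\om$ lies in $\cC_{\om,\bbC}$; hence $\tilde h_\om + \|f\|_{Li}^{-1}\rho_0^{-1}\cdot(\text{something of norm}\le\|f\|_{Li})\cdot f$ — more precisely $f + R(f)\tilde h_\om = R(f)\big(\tilde h_\om + R(f)^{-1} f\big)$ — lies in the cone once $|R(f)|\ge \rho_0^{-1}\|f\|_{Li}$. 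So $K_1=\rho_0^{-1}$ works. The only care needed is that ``uniformly interior point'' is meant in the complex-cone sense; this is exactly the content of \cite[Appendix A]{HK} relating interior points, aperture, and reproducing property, and I would cite it rather than reprove it.

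\medskip\noindent\textbf{Part (iii): contraction under the perturbed operators — the main obstacle.} This is the heart of the matter. By Proposition \ref{ConeProp1}, there are parameters and a $k_1$ so that $\cL_\om^{0,k}\cC_\om\subset\cC_{\sig^k\om,\,a/2,\,b/2,\,c/2}\subset\cC_{\sig^k\om}$ for all $k\ge k_1$, and by the general projective-metric theory a strict inclusion of a cone into a $\del$-scaled subcone yields a finite projective diameter of the image — this gives the \emph{real} version of both displays in (iii) at $z=0$. The plan for $z\ne 0$ real and then complex is Rugh's perturbation argument (\cite{Rug}, \cite{Dub1,Dub2}): write $\cL_\om^{z,k} = \cL_\om^{0,k} + (\cL_\om^{z,k}-\cL_\om^{0,k})$ and estimate the perturbation in operator norm by $O(|z|)$ uniformly in $\om$ and in $k_1\le k\le 2k_1$ — this is where I restrict to the bounded window of $k$'s, so that the Lasota--Yorke bounds of Proposition \ref{Prop LY} (with $|t|$ replaced by $|z|$) and the uniform bound on $\|\cL_\om^{0,k}\|_{Li}$ give a perturbation of size $C(k_1)|z|$ with $C(k_1)$ a fixed constant. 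Then one invokes the quantitative statement (again \cite[Appendix A]{HK}, following \cite{Rug}): if a real operator maps a cone strictly inside itself with projective diameter $\le D_0$, and the aperture is bounded, then any complex operator within operator-norm distance $\eta(D_0,\text{aperture})$ of it maps the complexified cone $\cC'_{\om,\bbC}$ into $\cC'_{\sig^k\om,\bbC}$ with complex projective diameter $\le d_1(D_0,\text{aperture})$. Choosing $r>0$ so that $C(k_1) r < \eta$ finishes it. \emph{The hard part} is genuinely two-fold: first, making sure the perturbation estimate $\|\cL_\om^{z,k}-\cL_\om^{0,k}\|_{Li}\le C|z|$ is uniform in $\om$ — this needs the uniform Hölder bound on $\varphi_\om$, the uniform distortion \eqref{Distortion}, and the exponential-tail choice $v_\ell=e^{\ve_0\ell}$ with $\ve_0<c_2$ to control the weighted norms, i.e.\ exactly the place where \eqref{Exp} is used rather than merely subexponential tails; second, checking the bounded-aperture hypothesis feeding Rugh's theorem is the \emph{same} uniform aperture proved in part (i), so that $d_1$ and $r$ come out $\om$-independent. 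Once (i)--(iii) are in hand, the theorem follows, and — as the text already indicates — combining it with \cite[Theorem 4.1]{HK} and \cite[Theorem 4.2]{HK} yields Theorem \ref{RPF}.
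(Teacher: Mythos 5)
Your overall plan mirrors the paper's: verify cone membership of $\tilde h_\om$ and $\mathbf 1$ with uniform constants, reduce the dual aperture bound to showing a uniform $\|\cdot\|_{Li}$-ball about $\tilde h_\om$ lies in $\cC_{\om,\bbC}$, deduce reproducing from that same ball, and then perturb off the real strict contraction $\cL_\om^{0,k}$ of Proposition~\ref{ConeProp1} to get the complex contraction for small $|z|$. The paper verifies the smallness condition in part (iii) ``pointwise in $\gam\in\Gam_{\sig^k\om}$'' via Dubois' criterion (Theorem A.2.4 in \cite[Appendix A]{HK}) and the arithmetic Lemma~\ref{A A' lemma}, rather than your operator-norm phrasing, but the content is the same, and the key observation — that $\cL_\om^{0,k}f$ lands in the smaller cone $\cC_{\sig^k\om,\del a,\del b,\del c}$, which keeps $\gam(\cL_\om^{0,k}f)$ comparable to $\tilde m_{\sig^k\om}(\cL_\om^{0,k}f)$ — is what makes either phrasing go through.

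There is, however, a concrete error in your justification of \eqref{aperture}: you assert that $\tilde h_\om$ is a uniformly interior point because $\inf\tilde h_\om\geq c_0/v_0>0$, but this is false. Since $\tilde h_\om=h_\om/v$ with $v(x,\ell)=v_\ell=e^{\ve_0\ell}$, on level $\ell$ we have $\tilde h_\om\leq c_1e^{-\ve_0\ell}$, so $\inf\tilde h_\om=0$ and there is no uniform pointwise lower bound. More importantly, even a pointwise lower bound would be beside the point: $\cC_\om$ is not the positivity cone but is cut out by the three families of linear inequalities $\Upsilon_P$, $\Gam_P$, $\Gam_{x,y}$, $\Gam_{x_1,\pm}$, so ``interior'' must mean that \emph{those} inequalities hold with uniform slack, which in turn requires (via \eqref{Complexification1} and the criterion from Lemma~A.2.7 of \cite[Appendix A]{HK}) a case-by-case check that $\Re\big(\overline{\mu(\tilde h_\om+q)}\,\nu(\tilde h_\om+q)\big)>0$ for all pairs $\mu,\nu\in\Gam_\om$ whenever $\|q\|_{Li}$ is small. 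The paper carries this out explicitly, using $\frac{1}{\mu_\om(A)}\int_A\tilde h_\om\,d\tilde m_\om=1$, the bound $\text{ess-sup}\,\|\tilde h_\om\|_{Li}<\frac12\min\{a,b,c\}$, and the constant $D$ of \eqref{D def}. You should replace the $\inf\tilde h_\om$ argument with this verification; once that is fixed, your derivation of part (ii) from the same ball and your perturbation argument for (iii) stand.
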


\begin{proof}
The proof proceeds similarly to the proof of  \cite[Theorem 6.3]{LLT2020}. For readers' convenience we will give most of the details. 
We begin with the proof of the first part. First, since 
\[
\int_A \tilde h_\om d\tilde m_\om=\int_Ad\mu_\om=\mu_\om(A),
\]
for any measurable  set $A$, 
it is clear that $\tilde h_\om\in\mathcal C_\om$ if $a>1$, $b>|\tilde h_\om|_\om$ and $c>\|\tilde h_\om\|_\infty$ (note that $|\tilde h_\om|_\om$ and $\|\tilde h_\om\|_\infty$ are uniformly bounded in $\om$). Moreover, if $c>1$ and $a>D$, where
\begin{equation}\label{D def}
D=\text{ess-sup}\max\Big\{\frac{\tilde m_\om(P)}{\mu_\om(P)}:\,P\in\cP_\om\Big\}<\infty
\end{equation}
then $\textbf{1}\in\mathcal C_{\om}$ (the above essential supremum is indeed finite since $\mu_\om(A_{\om,i})\geq\del(\ve,s)>0$ by (\ref{MIN})).

Next, if $f\in\cC_\om'$ and $\tilde m_\om(f)=0$ then by (\ref{f bound}) we have $f=0$ and so $\tilde m_\om \in\cC_\om^*$. In fact, we have that
\begin{equation}\label{f bound}
\|f\|_\infty\leq c_2\int fd\tilde m_\om
\end{equation}
for some $c_2>0$, and so it follows
from the definitions of the norm $\|f\|_{Li}$ and from (\ref{f bound}) that 
\[
\|f\|=\|f\|_\infty+\sup_\ell|f|_{\om,\Del_{\om,\ell}}=\|f\|_\infty+|f|_\om\leq (c_2+b)\tilde m_\om(f)=(c_2+b)\int fd\tilde m_\om.
\]
and therefore by  \cite[Lemma 5.3]{Rug} the inequality (\ref{aperture0}) hold true with $K=2\sqrt 2(c_2+b)$. According to Lemma A.2.7 \cite[Appendix A]{HK}, for any $M>0$,
inequality (\ref{aperture}) holds true for every $\mu\in\cC_{\om,\bbC}^*$ if
\begin{equation}\label{Const M}
B_{\om,\cH}(\tilde h_\om,1/M):=\left\{f\in\cH_\om: \|f-\tilde h_\om\|_{Li,\om}<\frac1M\right\}\subset\cC_{\om,\bbC}.
\end{equation}
Now we will find  a constant $M$ for satisfying (\ref{Const M}). Fix some $\om\in\Om$.
For any $f$ with $\|f\|_{Li}<\infty$, $P\in\cP_\om$ and $x_1\in A_{\om,j_\om+1}$, and distinct $x,y$ which belong to the same level  $\Del_{\om,\ell}$ (for some $\ell$) set
\begin{eqnarray*}
\Upsilon_P(f)=\frac1{\mu_\om(P)}\int_P fd\tilde m_\om,\,\,
\Gam_P(f)=a\int fd\tilde m_\om-\frac1{\mu_\om(P)}\int_P fd\tilde m_\om,\\
\Gam_{x,y}(f)=b\int fd\tilde m_\om-\frac{f(x)-f(y)}{d_\om(x,y)}
\, \text{ and }\,
\Gam_{x_1,\pm}(f)=c\int fd\tilde m_\om\pm f(x_1)
\end{eqnarray*}
Let $\Gam_\om$ be the collection of all the above linear functionals. Then, with $\cH_{\om}(\bbR)=\cH_{\om,\be}(\bbR)$ denoting the space of real valued $f:\Del_\om\to\bbC$ with $\|f\|_{Li}=\|f\|_{Li,\om}<\infty$,
\[
\cC_\om=\{f\in\cH_{\om}(\bbR):\,\gamma(f)\geq0,\,\forall \gam\in\Gam_\om\}
\]
and so
\begin{equation}\label{Complexification1}
\cC_{\om,\bbC}=\{f\in \cH_\om\:\,\Re\big(\overline{\mu(f)}\nu(f)\big)
\geq0\,\,\,\,\forall\mu,\nu\in\Gam_\om\}.
\end{equation}
where as defined earlier $\cH_\om=\cH_\om(\bbC)$ is the corresponding space of complex functions.
Let $g\in\cH_\om$ be of the form $g=\tilde h_\om+q$ for some $q\in\cH_\om$. We need to find a constant $M>0$ so that $\tilde h_\om+q\in \cC_{\om,\bbC}$ if $\|q\|<\frac1M$. In view of (\ref{Complexification1}), there are several cases to consider. First, suppose that $\nu=\Upsilon_{P}$ and $\mu=\Upsilon_Q$ for some $P,Q\in\cP_\om$. Since
\[
\frac{1}{\mu_\om(A)}\int_A \tilde h_\om d\tilde m_\om=\frac{1}{\mu_\om(A)}\int_A 1d\mu_\om=1
\]
for any measurable set $A$ with positive measure, we have 
\[
\Re\big(\overline{\mu(\tilde h_\om+q)}\nu(\tilde h_\om+q)\big)\geq 1-(D^2\|q\|^2+2D\|q\|)
\]
where $D$ was defined in \ref{D def} and $\|\cdot\|=\|\cdot\|_{Li}$.
Hence
\[
\Re\big(\overline{\mu(\tilde h_\om+q)}\nu(\tilde h_\om+q)\big)>0,
\]
if $\|q\|$ is sufficiently small. Now consider the case when $\mu=\Upsilon_P$ for some $P\in\cP_\om$ and
$\nu$ is one of the $\Gamma$'s, say $\nu=\Gam_{x,y}$. Then
\begin{eqnarray*}
\Re\big(\overline{\mu(\tilde h_\om+q)}\nu(\tilde h_\om+q)\big)\geq b-\|\tilde h_\om\|-bc_0\|q\|-\|q\|\\-D\|q\|(b+\|\tilde h_\om\|+bc_0\|q\|+\|q\|)
\geq b-\|\tilde h_\om\|-C(D,b)(\|\tilde h_\om\|+\|q\|+\|q\|)^2 
\end{eqnarray*}
where $C(D,b,c_0)>0$ depends only on $D$, $b$ and $c_0:=\text{ess-sup }\tilde m_\om(\textbf{1})<\infty$. If $\|q\|$ is sufficiently small and $b>\|\tilde h_\om\|$ then 
the above left hand side is clearly positive. Similarly, if $\text{ess-sup }\|\tilde h_\om\|<\frac12\min\{a,b,c\}$ and $\|q\|$ is sufficiently small then 
\[
\Re\big(\overline{\mu(\tilde h_\om+q)}\nu(\tilde h_\om+q)\big)>0
\]
when either $\nu=\Gam_{x_1,\pm}$ or $\nu=\Gam_{x,y}$ (note that $\om\to\|\tilde h_\om\|$ is a bounded random variable).

Next, consider the case when $\mu=\Gam_{x_1,\pm}$ for some $x_1\in A_{\om,j_\om+1}$ and  $\nu=\Gam_{x,y}$ for some distinct $x$ and $y$ in the same floor. Then with some constant $A>0$ which depends only on $c,b$ and $c_0$ we have
 \begin{eqnarray*}
\Re\big(\overline{\mu(\tilde h_\om+q)}\nu(\tilde h_\om+q)\big)\geq 
bc-\|\tilde h_\om\|^2-A\|q\|
\end{eqnarray*}
where we have used that $\int \tilde h_\om d\tilde m_\om=1$ and that $\|\tilde h_\om\|$ is bounded. Therefore, if $\|q\|$ is sufficiently small and $c$ and $b$ are sufficiently large then
\[
\Re\big(\overline{\mu(\tilde h_\om+q)}\nu(\tilde h_\om+q)\big)>0.
\] 
Similarly, since 
\[
\left|\frac1{\mu_\om (P)}\int_P qd\tilde m_\om\right|\leq D\|q\|
\]
and 
\[
\int qd\tilde m_\om\leq \tilde m_\om(\textbf{1})\|q\|\leq c_0\|q\|,
\]
when  $a,b$ and $c$ are large enough 
there are constants $A_1,A_2>0$ which depend only on $a,b,c,D$, $c_0$ and $\text{ess-sup }\|\tilde h_\om\|$ so that
for any other choice of $\mu,\nu\in\Gamma_\om\setminus\{\Upsilon_P\}$ and $q$ with $\|q\|\leq1$
we have
\begin{equation*}
\Re\big(\overline{\mu(\tilde h_\om+q)}\nu(\tilde h_\om+q)\big)\geq A_1(1-A_2\|q\|)
\end{equation*}
and so, when  $\|q\|$ is sufficiently small then the above left hand side is positive. The proof of Theorem \ref{ConeThm} (i) is now complete. 

The proof of Theorem \ref{ConeThm} (ii) proceeds exactly as the proof of \cite[Lemma 3.11]{M-D}: for a real valued function $f\in\cH$, we have that $f+R(f)\tilde h_\om$ for $R(f)>0$ belongs to the cone  if 
\begin{eqnarray*}
R(f)\geq(a-1)^{-1}\cdot\max\Big\{\frac1{\mu_\om(P)}\int_P fd\tilde m_\om-a\int fd\tilde m_\om:\,\,P\in\cP_\om\Big\},\\
R(f)\geq\frac{|f|_\om-b\int fd\tilde m_\om}{b-|\tilde h_\om|_\om},\,\,R(f)>\max\Big\{-\frac1{\mu_\om(P)}\int_P fd\tilde m_\om:\,\,P\in\cP_\om\Big\}\,\,\text{ and }\\
R(f)\geq\frac{\|f\|_\infty-c\int fd\tilde m_\om}{c-\|\tilde h_\om\|_\infty}
\end{eqnarray*}
where we take $a,b$ and $c$ so that all the denominators appearing in the above inequalities  are bounded from below by, say $\frac12$,
and we have used that $\frac{1}{\mu_\om(A)}\int \tilde h_\om d\tilde m_\om=1$ for any measurable set $A$ (apply this with $A=P\in\cP_\om$). Now we will show that it is indeed possible to choose such $R(f)\leq K_1\|f\|$ for some constant $K_1$. We have
\[
\frac1{\mu_\om(P)}\int_P fd\tilde m_\om\leq D\|f\|_\infty\leq D\|f\|
\]
where $D$ is given by (\ref{D def}),
and 
\[
\int fd\tilde m_\om\leq \|f\|_\infty\tilde\mu_\om(\textbf{1})\leq \|f\|_\infty c_0\leq \|f\|c_0
\]
for some $c_0>0$. Therefore, when, say $a>2$ then all the above lower bounds on $R(f)$ are bounded from above by 
\[
2\max(D+ac_0, 1+bc_0,1+cc_0)\|f\|.
\]
Therefore, for real $f$'s we can take $K_1=2\max(D+ac_0, 1+bc_0,1+cc_0)$.
For complex-valued $f$'s we can write $f=f_1+if_2$, then take $R(f)=R(f_1)+iR(f_2)$ and use that with $\bbC'=\bbC\setminus\{0\}$, 
\[
\cC_{\om,\bbC}=\bbC'(\cC_\om+i\cC_\om).
\]

Now we will prove Theorem \ref{ConeThm} (iii). Let $k_1\leq k\leq 2k_1$, where $k_1$ comes from Proposition \ref{PrpCn1}.
According to Theorem A.2.4  in \cite[Appendix A]{HK} (which is \cite[Theorem 4.5]{Dub2}), if 
\begin{equation}\label{Comp1}
|\gam(\cL_\om^{z,k} f)-\gam(\cL_\om^{0,k}f)|\leq \ve_1 \gam(\cL_\om^{0,k}f)
\end{equation}
for any nonzero $f\in\cC_\om$ and $\gam\in\Gamma_{\sig^k\om}$, for some $\ve_1>0$ so that 
\[
\del:=2\ve_1\Big(1+\cosh\big(\frac12 d_0\big)\Big)<1
\]
where $d_0$ comes from Proposition \ref{PrpCn1},
then, with $\cC'_{\om,\bbC}=\cC_{\om,\bbC}\setminus\{0\}$,
\begin{equation}\label{I}
\cL_\om^{z,k}\cC'_{\om,\bbC}\subset\cC'_{\sig^k\om,\bbC}
\end{equation}
and
\begin{equation}\label{II} 
\sup_{f,g\in\cC_{\om,\bbC}}\del_{\sig^k\om}(\cL_\om^{z,k}f,\cL_\om^{z,k}g)\leq d_0+6|\ln(1-\del)|.
\end{equation}
We will show now that there exists a constant $r>0$ so that (\ref{Comp1}) holds true for any $z\in B(0,r)$ and $f\in\cC_\om$. This relies on the following very elementary result. 
\begin{lemma}\label{A A' lemma}
Let $A$ and $A'$ be complex numbers, $B$ and $B'$ be real numbers, and let $\ve_1>0$ and $\eta\in(0,1)$ so that
\begin{itemize}
\item
$B>0$ and $B>B'$;
\item
$|A-B|\leq\ve_1B$;
\item
$|A'-B'|\leq\ve_1 B $;
\item
$|B'/B|\leq\eta$.
\end{itemize}
Then 
\[
\left|\frac{A-A'}{B-B'}-1\right|\leq 2\ve_1(1-\eta)^{-1}.
\]
\end{lemma}
To prove Lemma \ref{A A' lemma}  we just write
\[
\left|\frac{A-A'}{B-B'}-1\right|\leq\left|\frac{A-B}{B-B'}\right|+
\left|\frac{A'-B'}{B-B'}\right|\leq \frac{2B\ve_1}{B-B'}=\frac{2\ve_1}{1-B'/B}.
\]
Next, let $f\in\cC_\om'$. First, suppose that $\gam$ have the form 
$\gam=\Gam_{P}$ for some $P\in\cP_{\sig^k\om}$. Set
\begin{eqnarray*}
A=a\int \cL_\om^{z,k}fd\tilde m_{\sig^k\om},\,\, A'=\frac1{\mu_{\sig^k\om}(P)}\int_P \cL_\om^{z,k}fd\tilde m_{\sig^k\om},\\
B=a\int\cL_\om^{0,k}fd\tilde m_{\sig^k\om}\,\,\text{ and }\,\,B'=\frac1{\mu_{\sig^k\om}(P)}\int_P \cL_\om^{0,k}fd\tilde m_{\sig^k\om}.
\end{eqnarray*}
Then $B=a\int f d\tilde m_\om$ (since $(\cL_\om^0)^*\tilde m_{\sig\om}=\tilde m_\om$) and
\[
|\gam(\cL_\om^{z,k}f)-\gam(\cL_\om^{0,k}f)|=|A-A'-(B-B')|.
\]
We want to show that the conditions of Lemma \ref{A A' lemma} hold true. 
By Proposition \ref{PrpCn1} we have 
\begin{equation}\label{Smaller cone}
\cL_\om^{0,k}f\in\cC_{\sig^k\om,\del a,\del b,\del c,s,\ve}
\end{equation}
which in particular implies that 
\[
0\leq B'\leq \del a\int\cL_\om^{0,k}fd\tilde m_{\sig^k\om}=\del B.
\]
Since $f$ is nonzero and $\int\cL_\om^{0,k}fd\tilde m_{\sig^k\om}=\int fd\tilde m_\om\geq 0$ the number $B$ is positive 
(since (\ref{aperture0}) holds true). It follows that $B>B'$ and that 
\[
|B'/B|\leq\del<1. 
\]
Now we will estimate $|A-B|$. For any complex $z$ so that $|z|\leq1$ write
\begin{eqnarray*}
|A-B|=a\left|\int\cL_\om^{0,k}\big(f(e^{zS_k^\om\varphi}-1)\big)d\tilde m_{\sig^k\om}\right|\leq a\|f\|_\infty\|e^{zS_k^\om\varphi}-1\|_\infty
\int\cL_\om^{0,k}\textbf{1}d\tilde m_{\sig^k\om}\\= a\|f\|_\infty\|e^{zS_k^\om\varphi}-1\|_\infty\int \textbf{1}d \tilde m_{\om}=
a \tilde m_\om(\textbf{1})\|f\|_\infty\|e^{zS_k^\om\varphi}-1\|_\infty\\
\leq C_2ac_2\int fd\tilde m_\om\,\cdot(2k_1e^{2k_1\|\varphi\|_\infty}\cdot|z|\|\varphi\|_\infty)\\=2ac_2k_1R\|\varphi\|_\infty|z|\int\cL_\om^{0,k}f d\tilde m_{\sig^k\om}=R_1|z|B
\end{eqnarray*}
where $\textbf{1}$ is the function which takes the constant value $1$, $C_2$ is an upper bound of $\tilde m_\om(\textbf{1})$, 
\[
\|\varphi\|_\infty:=\text{ess-sup}\|\varphi_\om\|_\infty
\] 
and 
\[
R_1=2C_2c_2k_1\|\varphi\|_\infty e^{2k_1\|\varphi\|_\infty}.
\]
In the latter estimates we have also used \eqref{f bound}.
It follows  that the conditions of Lemma \ref{A A' lemma} are satisfied with $\ve_1=R_1|z|$. Now we will estimate $|A'-B'|$. 
First, write 
\begin{eqnarray*}
|A'-B'|\leq\frac1{\mu_{\sig^k\om}(P)}\int_P\big|\cL_\om^{z,k}f-\cL_\om^{0,k}f\big|d\tilde m_{\sig^k\om}\\=
\frac1{\mu_{\sig^k\om}(P)}\int_P\big|\cL_\om^{0,k}\big(f(e^{zS_k^\om\varphi}-1)\big)|d\tilde m_{\sig^k\om}\\
\leq\|f\|_\infty \|e^{zS_k^\om\varphi}-1\|_\infty\frac1{\mu_{\sig^k\om}(P)}\int_P\cL_\om^{0,k}\textbf{1}d\tilde m_{\sig^k\om}
\leq M_1\|f\|_\infty \|e^{zS_k^\om\varphi}-1\|_\infty\frac{\tilde m_{\sig^k\om}(P)}{\mu_{\sig^k\om}(P)}\\\leq
 M_1Dc_2\int fd\tilde m_\om\,\cdot 2k_1e^{2k_1\|\varphi\|_\infty}\|\varphi\|_\infty|z|
=R_2|z|B
\end{eqnarray*}
where $D$ is defined by (\ref{D def}), $M_1$ is an upper bound on $\|\cL_\om^{0,k}\textbf{1}\|_\infty$ for $k_1\leq k\leq 2k_1$ (in fact, we can use Proposition \ref{Prop LY} to obtain an upper bound which does not depend on $k$ and $\om$)
 and
\[
R_2= M_1Da^{-1}2c_2k_1\|\varphi\|_\infty e^{2k_1\|\varphi\|_\infty}.
\]
We conclude now from Lemma \ref{A A' lemma} that 
\[
|\gam(\cL_\om^{z,k}f)-\gam(\cL_\om^{0,k}f)|\leq 2R_3(1-\del)^{-1}|z|\gam(\cL_\om^{0,k}f)
\]
where $R_3=\max(R_1,R_2)$.

Next, consider the case when $\gam$ have the form $\gam=\Gam_{x,\pm}$ for some $x\in Q_{\sig^k\om,j_{\sig^k\om}+1}$. Set
\begin{eqnarray*}
A=c\int \cL_\om^{z,k}fd\tilde m_{\sig^k\om},\,\, A'=\pm \cL_\om^{z,k}f(x),\\
B=c\int\cL_\om^{0,k}fd\tilde m_{\sig^k\om}\,\,\text{ and }\,\,B'=\pm\cL_\om^{0,k}f(x).
\end{eqnarray*}
Then $B>0$ and by (\ref{Smaller cone}) we have 
\[
|B'|\leq \del B.
\]
Similarly to the previous case, we have 
\[
|A-B|\leq R_4B|z|
\]
where $R_4=2c_2k_1\|\varphi\|_\infty$. Now we will estimate $|A'-B'|$. Using (\ref{f bound}) 
we have
\begin{eqnarray*}
|A'-B'|=|\cL_\om^{z,k}f(x)-\cL_\om^{0,k}f(x)|\leq \|f\|_\infty\|e^{zS_k^\om\varphi}-1\|_\infty\cL_\om^{0,k}\textbf{1}(x)\\
\\\leq c_2\int fd\tilde m_\om\,\cdot(2k_1|z|\|\varphi\|_\infty e^{2k_1\|\varphi\|_\infty} M_1)=BR_5|z| 
\end{eqnarray*}
where $R_5=2c_2k_1\|\varphi\|_\infty M_1 e^{2k_1\|\varphi\|_\infty}$ and
$M_1$ is an upper bound on $\|\cL_\om^{0,k}\textbf{1}\|_\infty$ for $k_1\leq k\leq 2k_1$.  
Since 
\[
|\gam(\cL_\om^{z,k}f)-\gam(\cL_\om^{0,k}f)|=|A-A'-(B-B')|,
\]
we conclude from Lemma \ref{A A' lemma} that 
\[
|\gam(\cL_\om^{z,k}f)-\gam(\cL_\om^{0,k}f)|\leq 2R_6(1-\del)^{-1}|z|\gamma (\cL_\om^{0,k})
\]
where $R_6=\max\{R_4,R_5\}$. 

Finally, we consider the case when $\gam=\Gam_{x,x'}$ for some distinct $x'$ and $x'$ which belong to the same floor of $\Del_{\sig^k\om}$.  
Set $d(x,x')=d_{\sig^k\om}(x,x')$,
\begin{eqnarray*}
A=b\int \cL_\om^{z,k}fd\tilde m_{\sig^k\om},\,\, A'=\frac{\cL_\om^{z,k}f(x)-\cL_\om^{z,k}f(x')}{d(x,x')},\\
B=b\int\cL_\om^{0,k}fd\tilde m_{\sig^k\om}\,\,\text{ and }\,\,B'=\frac{\cL_\om^{0,k}f(x)-\cL_\om^{0,k}f(x')}{d(x,x')}.
\end{eqnarray*}
Then, exactly as in the previous cases, $B>0$,  we have that $|B'|\leq \del B$,
\[
|\gamma (\cL_\om^{z,k}f)-\gamma(\cL_\om^{0,k}f)|=|A-A'-(B-B')|
\]
and 
\[
|A-B|\leq R_7B|z|
\]
where $R_7=2c_2b^{-1}+k_1R\|\varphi\|_\infty$.  Now we will estimate $|A'-B'|$.
Let $\ell$ be so that $x,x'\in\Del_{\sig^k\om,\ell}$ and write
$x=(x_0,\ell)$ and $x'=(x_0',\ell)$. Then $d_{\sig^k\om}(x,x')=\be^{\ell-m}d_{\sig^{m}\om}((x_0,m),(x_0',m))$ for any $0\leq m\leq\ell$.
If $k\leq \ell$ then for any complex $z$,
\[
\cL_\om^{z,k}f(x)=v_\ell^{-1}v_{\ell-k}e^{zS_k^\om\varphi(x_0,\ell-k)}f(x_0,\ell-k)
\]
and a similar equality hold true with $x'$ in place of $x$.
Set 
\begin{eqnarray*}
U(z)=f(x_0,\ell-k)e^{zS_k^\om\varphi(x_0,\ell-k)} \,\text{ and }\,  V(z)=f(x_0',\ell-k)e^{zS_k^\om\varphi(x_0',\ell-k)}
\end{eqnarray*}
and $W(z)=U(z)-V(z)$.
Then for any $z\in\bbC$ so that $|z|\leq1$ we have
\[
d(x,x')|A'-B'|=v_\ell^{-1}v_{\ell-k}|W(z)-W(0)|\leq |z|\sup_{|\zeta|\leq1}|W'(\zeta)|.
\]
Since the functions $u_\om$ and $f$ are locally Lipschitz continuous (uniformly in $\om$) we obtain that for any $\zeta$ so that $|\zeta|\leq1$,
\[
|W'(\zeta)|\leq C_1d(x,x')\|f\| \leq d(x,x')C_1(b+c_2)\int fd\tilde m_\om=d(x,x')C_1b^{-1}(b+c_2)B
\]
where $C_1$ depends only on $k_1$ and $\|\varphi\|_\infty$, and $d(x,x')=d_{\sig^k\om}(x,x')$.

Next, suppose that $k>\ell$, where $\ell$ is such that $x,x'\in\Del_{\sig^k\om,\ell}$.
The approximation of  $|A'-B'|$ in this case is carried out essentially as in the classical case of uniformly distance expanding maps, as described in the following arguments.
First, since $k>\ell$  we can write 
\[
F_\om^{-k}\{x\}=\{y\},\,\,F_\om^{-k}\{x'\}=\{y'\}
\] 
where both sets are at most countable, the map $y\to y'$ is bijective and satisfies that for all $0\leq q\leq k$,
\[
d_{\sig^q\om}(F_\om^qy,F_\om^qy')\leq \beta^{k-q} d(x,x')\leq d(x,x').
\]
Note also that the paring is done so that $(y,y')$ also belong to the same partition element in $\Del_\om$.
Then for any complex $z$ we have
\[
\cL_\om^{z,k}f(x)=v_\ell^{-1}\sum_{y}v(y)JF_\om^k(y)^{-1}e^{zS_k^\om\varphi(y)}f(y)
\] 
and 
\[
\cL_\om^{z,k}f(x')=v_\ell^{-1}\sum_{y'}v(y)JF_\om^k(y')^{-1}e^{zS_k^\om\varphi(y')}f(y')
\] 
where we note that $v(y)=v(y')$ since $y$ and $y'$ belong to the same floor. For any $y$ set 
\[
U_{y}(z)=JF_\om^k(y)^{-1}e^{zS_k^\om\varphi(y)}f(y)
\]
and 
\[
W_{y,y'}(z)=U_{y}(z)-U_{y'}(z).
\]
Then for any complex $z$ so that $|z|\leq1$ we have
\[
|W_{y,y'}(z)-W_{y,y'}(0)|\leq|z|\sup_{|\zeta|\leq 1}|W'_{y,y'}(\zeta)|.
\]
Since $JF_\om ^k$ satisfies (\ref{Distortion}) and $\varphi_\om$ and $f$ are locally Lipschitz continuous (uniformly in $\om$) we 
derive that 
\begin{equation}\label{Der Bound}
\sup_{|\zeta|\leq 1}|W'_{y,y'}(\zeta)|\leq C_2\|f\|d(x,x')(JF_\om^k(y)^{-1}+JF_\om^k(y')^{-1})
\end{equation}
for some constant $C_2$ which depends only on $\text{ess-sup}\|\varphi_\om\|, k_1$ and on $Q$ from (\ref{Distortion}).
Using that 
\[
\|f\|\leq(c_2+b)\int fd\tilde m_\om
\]
for some $c_2>0$
we derive now from (\ref{Der Bound})  that 
\begin{eqnarray*}
d(x,x')|A'-B'|=v_\ell^{-1}\left|\sum_{y}v(y)\big(W_{y,y'}(z)-W_{y,y'}(0)\big)\right|
\\\leq \big(|z|d(x,x')C_2\|f\|\big)v_\ell^{-1}\sum_{y}v(y)(JF_\om^k(y)^{-1}+JF_\om^k(y')^{-1})\\=
\big(|z|d(x,x')C_2\|f\|\big)\cdot\big(\cL_\om^{0,k}\textbf{1}(x)+\cL_\om^{0,k}\textbf{1}(x')\big)\leq E_1|z| B
\end{eqnarray*}
where $E_1=2M_1C_2b^{-1}(c_2+b)$ and $M_1$ is an upper bound of $\sup_n\|\cL_\om^{0,n}\textbf{1}\|_\infty$.
We conclude that there exists a constant $C_0$ so that for any $s\in\Gamma_\om$, $f\in\cC'$, $z\in\bbC$ and $k_1\leq k\leq 2k_1$,
\[
|\gamma(\cL_\om^{z,k}f)-\gam(\cL_\om^{0,k}f)|\leq C_0|z|\gamma(\cL_\om^{0,k}f).
\]
Let $r>0$ be any positive number so that 
\[
\del_r:=2C_0r\Big(1+\cosh\big(\frac12 d_0\big)\Big)<1.
\]
Then, by (\ref{Comp1}) and what proceeds it, (\ref{I}) and (\ref{II}) hold true $\bbP$-a.e.
for any $z\in\bbC$ with $|z|<r$ and $k_1\leq k\leq 2k_1$, and the proof of Theorem \ref{ConeThm} is complete.
\end{proof}


\section{Proofs of the limit theorems}\label{SecLimThms}
In this section we will work under Assumptions \ref{Ass Ap}, \ref{Ass Exp} and \ref{AsCov}. In particular  Theorem \ref{RPF} holds true.
Let $\varphi_\om:\Del_\om\to\bbR$, $\om\in\Om$ be a family of functions so that $\text{ess-sup }\|\varphi_\om\|_{Li}<\infty$ and $\varphi(\om,x)$ is measurable in both $\om$ and $x$. For $\bbP$-a.e. $\om$ we consider the functions
\[
S_n^\om \varphi=\sum_{j=0}^{n-1}\varphi_{\sig^j\om}\circ F_\om^j.
\]

\subsection{A Berry-Esseen theorem}
The proof of the first part  proceeds exactly as the proof of \cite[Theorem 2.5]{HafSDS}, and the proof of the second part is similar. For readers' convenience we will give the details of the second part, where is is enough to prove it in the case when $\mu_\om(S_n^\om \varphi)=0$ for any $n$ (i.e. when $\mu_\om(\varphi_\om)=0$). First, by (\ref{ExpCor}) applying \cite[Proposition 3.2]{EagHaf}  with $p_2=p_3=2$, $p_1=\infty$ and $M_j=(j+1)^{-2}$ and \cite[Proposition 3.3]{EagHaf} we indeed get (\ref{VarDif}). 

Next, using the properties of $\la_\om(z)$ one can define a branch $\Pi_{\om}(z)$ of $\ln\la_{\om}(z)$ in some deterministic neighborhood $U$ of $0$ so that $\Pi_\om(0)=0$ and $|\Pi_{\om}(z)|\leq c_0$ for some $c_0>0$. Set $\Pi_{\om,n}(z)=\sum_{j=0}^{n-1}\Pi_{\sig^j\om}(z)$. We claim first that 
\begin{equation}\label{C1}
\Pi_{\om,n}'(0)=0\,\,\text { and }\text{ess-sup }\sup_{n}|\Pi_{\om,n}''(0)-\Sig_{\om,n}^2|<\infty.
\end{equation}
In order to prove the first equality we first differentiate both sides of the identities $\nu_\om^{(z)}(h_\om^{(z)})=1$ and $\nu_\om^{(z)}(h_\om^{(0)})=\textbf{1}$ with respect to $z$ and then substitute $z=0$. This yields that 
\[
\nu_\om^{(0)}\left(\frac{d}{dz}h_\om^{(z)}\Big|_{z=0}\right)=0
\]
Next, we differentiate  the identity 
$\cL_\om^{z,n}(h_\om^{(z)})=\la_{w,n}(z)h_{\sig^n\om}^{(z)}$ with respect to $z$,  plug in $z=0$ and then integrate both resulting sides with respect to $\nu_\om^{(0)}=\tilde m_\om$. This yields that 
\[
\la_{w,n}'(0)=\tilde m_\om(h_\om^{(0)}S_n^\om \varphi)=\int S_n^\om \varphi d\mu_\om
\]
where we have used that $\mu_\om=h_\om d m_\om=\tilde h_\om d\tilde h_\om$ and that $h_\om^{(0)}=\tilde h_\om=h_\om/v$.
Since $\la_{\om,n}'(0)=\Pi_{\om,n}'(0)$ the proof of the claim is complete.
Now we will prove the inequality in (\ref{C1}). First, by iterating \eqref{L Du} and using that $\tilde h_\om=h_\om/v$, $\tilde m_\om=vdm_\om$ and $\mu_\om=h_\om dm_\om$, for any complex $z$ we have
\begin{equation}\label{CharFunc}
\mu_\om(e^{zS_n^\om \varphi})=\tilde m_\om\big(\cL_{\om}^{z,n}(\tilde h_\om)\big)=\tilde m_\om\big(\cL_{\om}^{z,n}(h_\om/v)\big).
\end{equation}
Using (\ref{RPF}) we can write
\begin{equation}\label{CharFunc1}
\tilde m_\om\big(\cL_{\om}^{z,n}(h_\om/v)\big)=\la_{\om,n}(z)\left(\tilde m_\om(h_{\sig^n \om}^{(z)})\nu_\om^{(z)}(\tilde h_\om)+\del_{\om,n}(z)\right)
\end{equation}
where $\del_{\om,n}(z)$ is an analytic function so that $|\del_{\om,n}(z)|\leq c\del^n$.
Let us now consider the analytic function $G_{\om,n}(z)=\tilde m_\om(h_{\sig^n \om}^{(z)})\nu_\om^{(z)}(\tilde h_\om)+\del_{\om,n}(z)$. Since  $\tilde h_\om=h_\om^{(0)}$ and $\tilde m_\om=\nu_\om^{(0)}$, using also \eqref{CharFunc} we conclude that $G_{\om,n}(0)=1$. Moreover, $G_{\om,n}$ is  bounded around the origin, uniformly in $\om$ and $n$, since $z\to h_\om^{(z)}$ and $z\to \nu_\om^{(z)}$ are uniformly bounded around the origin. Thus we can develop analytic branches of $\log G_{\om,n}(z)$ around the origin which vanish at $z=0$ and are uniformly bounded. Taking now the logarithms of both sides of  (\ref{CharFunc1}) and then considering the second derivatives at $z=0$, using the Cauchy integral formula we get that 
\begin{equation}\label{I.1}
\left|\text{Var}_{\mu_\om}(S_n^\om \varphi)-\Pi_{w,n}''(0)\right|\leq R
\end{equation}
where $R>0$ is some constant which does not depend on $n$, where we have used \eqref{CharFunc} to differentiate the left hand side.

Next, set $a_\om=m_\om(\Del_\om)$. Then there is a constant $C>1$ so that $1\leq a_\om\leq C$ for $\bbP$ a.e. $\om$. Now, for for any $z\in\bbC$,
\begin{equation}\label{Fib CLT rel 1}
\bar m_{\om}(e^{zS_n^\om \varphi})=a_\om^{-1}m_{\sig^n\om}(P_\om^{0,n}e^{zS_n^\om \varphi})=
a_\om^{-1}m_{\sig^n\om}(P_\om^{z,n}\textbf{1})=a_\om^{-1}\tilde m_{\sig^n\om}(\cL_\om^{z,n}(1/v)).
\end{equation}
Set $U=B(0,r)$, where $r$ comes from Theorem \ref{RPF}.
Let the analytic function $\varphi_{\om,n}:\to\bbC$ given by 
\begin{equation}\label{var phi def 1}
\varphi_{\om,n}(z)=\frac{\tilde m_{\sig^n\om}(\cL_\om^{z,n}(1/v))}{a_\om \la_{\om,n}(z)}.
\end{equation}
Then by (\ref{Fib CLT rel 1}) for any $z\in U$ and $n\geq1$,
\begin{equation}\label{Fib CLT rel 2}
\bar m_{\om}(e^{zS_n^\om \varphi})=
e^{\Pi_{\om,n}(z)}\varphi_{\om,n}(z).
\end{equation}
Next, by (\ref{I.1}) we have $\Pi_{\om,n}'(0)=0$
and therefore by (\ref{var phi def 1}), 
\begin{equation}\label{var phi ' 0 =0}
\varphi_{\om,n}'(0)=0.
\end{equation}

Now, we claim that there exists constants $A$ such that $\bbP$-a.s.
for all $n\in\bbN$ and  $z\in\bbC$ so that $|z|<r$ (i.e. $z\in U$) we have
\begin{equation}\label{var phi bound}
|\varphi_{\om,n}(z)|\leq A.
\end{equation}
Indeed, by (\ref{Exponential convergence}), there exist 
constants $A_1,k_1>0$ and $c\in(0,1)$ 
such that  for any $z\in U$
and $n\geq k_1$,
\begin{equation}\label{L z approx B.E.}
\left\|\frac{\cL_\om^{z,n}(1/v)}{\la_{\om,n}(z)}- h_{\sig^n\om}^{(z)}\nu_\om^{(z)}(1/v)
\right\|\leq A_1\del^{n}.
\end{equation}
The estimate (\ref{var phi bound}) follows now since $m_\om(\Del_\om)\leq C$, $\|\nu_\om^{(z)}\|\leq C$ and $\|h_\om^{(z)}\|\leq C$ for some $C>1$ and all $z$ in a neighborhood of $0$.

Next, by considering the Taylor expansion of $\varphi_{\om,n}$ of order $2$
we deduce from  (\ref{var phi ' 0 =0}) and (\ref{var phi bound})
that there exists a constant $B_1>0$ such that 
\begin{equation}\label{The const B}
|\varphi_{\om,n}(z)-\varphi_{\om,n}(0)|=|\varphi_{0,n}(z)-1|\leq B_1|z|^2
\end{equation}
for any $z\in\bbC$ so that $|z|\leq r/2$. 
Moreover, using (\ref{C1}) and (\ref{VarDif}) we see that there exist constants
$t_0,c_0>0$ such that $\bbP$-a.s. for any $s\in[-t_0,t_0]$ and a sufficiently large 
$n$,
\begin{equation}\label{Re press0}
\Big|\Pi_{\om,n}(is)+
\frac{s^2}2 v_{\om,n}\Big|\leq c_0|s|^3n+\frac12R_1s^2
\end{equation}
where $R_1$ is some constant and
we have also used that that $|\Pi_\om(z)|\leq c_0$ for some $c_0$ which does not depend on $\om$ and $z$. 
 Then, since  $v_{\om,n}$  grows linearly fast in $n$, we obtain from (\ref{Re press0}) that there exist  constants $t_0>0$ and $q>0$ so that for any $s\in[-t_0\sqrt{n},t_0\sqrt{n}]$ and 
 all sufficiently large $n$ we have
\begin{equation}\label{Re press1}
\Re\Big(\Pi_{\om,n}(is)\Big)\leq -qs^2\sqrt n.
\end{equation}
Next, by the Berry-Esseen inequality 
for any two distribution functions $F_1:\bbR\to[0,1]$
and $F_2:\bbR\to[0,1]$
with characteristic functions $\psi_1,\psi_2$, respectively, and $T>0$,
\begin{equation}\label{Essen ineq}
\sup_{x\in\bbR}|F_1(x)-F_2(x)|\leq \frac{2}{\pi}\int_{0}^T
\big|\frac{\psi_1(t)-\psi_2(t)}{t}\big|dt+\frac{24}{\pi T}\sup_{x\in\bbR}|F_2'(x)|
\end{equation}
assuming that $F_2$ is a  function with a bounded first
derivative. Let $\del_0>0$ and set $T_n=\del_0/\sqrt n$. For any real $t$ set 
$t_n=t/\sqrt{v_{\om,n}}$. Let $t\in[-T_n,T_n]$.  Then if $\del_0$ is small enough we have
by (\ref{Fib CLT rel 2}),
\begin{eqnarray}\label{We have}
|\bar m_\om(e^{it_nS_n^\om \varphi})-e^{-\frac12t^2}|\leq e^{\Re(\Pi_{\om,n}(it_n)}|\varphi_{\om,n}(it_n)-1|
\\+|e^{\Re(\Pi_{\om,n}(it_n))}-e^{-\frac12t^2}|:=I_1(n,t)+I_2(n,t).\nonumber
\end{eqnarray}
By (\ref{Re press1}) and (\ref{The const B}) we have
\[
I_1(n,t)\leq B_1e^{-qt^2}t^2/v_{\om,n}\leq C_\om e^{-qt^2}t^2n^{-1}.
\]
Using the mean value theorem, together with
(\ref{Re press0}) applied with $s=t_n$, taking into account (\ref{Re press1}) we derive that 
\[
I_2(n,t)\leq c_1v_{\om,n}^{-1}(|t|^3+t^2)e^{-c_2t^2}
\]
for some constants $c_1,c_2>0$. 
Let $F_1$ be the distribution function of $S_n^\om \varphi$ (w.r.t $\bar m_\om$), and let $F_2$ be the standard normal distribution.
Applying (\ref{Essen ineq}) with these functions and the above $T=T_n$ we obtain the second statement
with  $S_n^\om \varphi/\sqrt{v_{\om,n}}$ with respect to $\bar m_\om$. By using   \cite[Proposion 3.2]{EagHaf} we have that \[
\text{ess-sup }\sup_n|\bar m_\om(S_n^\om \varphi)-\mu_\om(S_n^\om \varphi)|=
\text{ess-sup }\sup_n|\bar m_\om(S_n^\om \varphi)|<\infty.
\]
Therefore, the difference between the centered and non-centered sum is $O(1/\sqrt n)$. 
Applying  \cite[Lemma 3.3]{HK-BE} with $a=\infty$ we complete the proof of the second part.
\qed

\subsection{The local CLT}
Since the CLT holds true, in both lattice and aperiodic cases, applying \cite[Theorem 2.2.3]{HK}, the local CLT's follows from (\ref{A3}), (\ref{A3'}), or their $\bar m_\om$-versions together with the estimates 
\[
|e^{\Pi_{\om,n}(it)}|=e^{\Re\left(\Pi_{\om,n}(it)\right)}\leq c_1e^{-c_2 nt^2} 
\]
which holds true for any $t\in[-\del,\del]$, a sufficiently small $\del>0$ and a sufficiently large $n$, where $c_1,c_2$ are positive constants. Indeed, in all four  local CLT's in question the characteristic function of the underlying sum is bounded from above around the origin by a constant times the function $|e^{\Pi_{\om,n}(it)}|$ (see (\ref{Fib CLT rel 2}) and its $\mu_\om$-version).
\qed

\subsubsection{On the verification of conditions (\ref{A3}) and (\ref{A3'})}\label{SecVer}
For uniformly random expanding maps (see  \cite[Ch. 5\& 7]{HK}) and for random uniformly hyperbolic maps \cite{DFGTV3},  conditions (\ref{A3}) and (\ref{A3'}) were verified  under certain assumption involving regularity properties of the random maps $f_\om$ and functions $u_\om$ around a periodic orbit of $\sig$, and other regularity assumptions on the behavior of the systems $(\Om,\cF,\bbP,\sig)$ aroud that periodic orbit (see \cite[Assumption 2.10.1]{HK}, \cite[Assumption 7.1.2]{HK} and \cite[Assumption 5.5]{HafEdge}). In this section we will extend this idea to random Young towers.

We assume here that $M_\om$ does not depend on $\om$ and that $(\Om,\cF,\bbP,\sig)$ is a product shift space, where $\Om=\Om_0^\bbZ$ is a topological space, $\cF$ contains all the Borel sets and $\bbP=P_0^\bbZ$ is a product measure.  Since in the applications in Section \ref{SecApp} we can only consider the case of i.i.d. maps, we will focus this case, even though it is possible to formulate results in more general circumstances. 
In this case we take $f_\om=f_{\om_0}$, where $\om=(\om_j)_{j\in\bbZ}$. We will also assume that $R_\om$ is a stopping time: for all $n$, $x$ so that $R_\om(x)=n$, we have $R_{\om'}(x)=n$  for evry $\om'\in\Om$ such that $\om'_j=\om_j$ for all $0\leq j<n$.
The following Assumption is our version of   \cite[Assumption 7.1.2]{HK} (or \cite[Assumption 5.5]{HafEdge} which is a more general version of it).

\begin{assumption}\label{AssFix}
(i) There is a point $\om_0\in\Om_0$ so that $P_0$ assigns positive mass to open neighborhoods of $\om_0$. 

(ii) The map $\om\to u_\om$ is continuous at the point  $a:=(...,\om_0,\om_0,\om_0,...)=\om_0^\bbZ$. Moreover, 
for any $n$, the operator $\cP_{\om,n}$ given by
\[
\cP_{\om,n}g(x_0)=\sum_{y: f_\om^n y=x_0, R_{\om}(y)=n}g(y)/Jf^n(y)=\cP_{\om}^{0}(\bbI(R_\om=n)g)(x_0)
\] 
is continuous in $\om$ at the point $a$.

(iii) The spectral radius of the deterministic transfer operator $\cR_{it}:=\cL_{a}^{it}$ is strictly less than $1$ for any $t\not=0$ in the aperiodic case, or for any nonzero $t\in[-\pi/h,\pi/h]$ in the lattice case (equivalently, the spectral radius of $\cP_a^{it}$ with respect to the norm $\|g\|=\|g\|_s+\|g\|_h$ defined in Section \ref{Pre} is less than $1$ for non-zero $t$'s in the above domains).
\end{assumption}
We note that because of the product structure we build our condition around a fix point of $\sig$, and not around a general periodic point (as in \cite{HK}), but, of course, considering periodic points is also possible. In this case we should just replace $\cL_a^{it}$ with $\cL_{a}^{it, n_0}$, where $n_0$ is the period of $a$, and all the continuity and regularity properties should hold true for points belonging to the finite periodic orbit of $a$. 

The second condition holds true when $f_{\om_0}=f_{\om_0'}$ if $\om_0'$ is close enough to $\om_0$. This happens when $\Om_0$ is a countable alphabet and $P_0(\{\om_0\})>0$. More general type of continuity of $f_{\om'}$ in $\om'$ around $\om_0$ can be considered.
The third condition is just a standard apriodicity (or maximality) assumption on the deterministic Young tower $(\Del_a, F_a)$.

\begin{proposition}
Suppose that  Assumption \ref{AssFix} holds true. Then
for $\bbP$-a.a. $\om$ the  left hand sides of (\ref{A3}) and (\ref{A3'}) decay exponentially fast to $0$, with either $\mu_\om$ or $\tilde m_\om$ in place of $\mu_\om$ (and for any appropriate set $J$).
\end{proposition}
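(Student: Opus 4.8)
The plan is to reduce the statement to a uniform exponential contraction of the twisted transfer operator cocycle and then to prove that contraction by feeding the deterministic spectral gap at the periodic point $a$ (Assumption \ref{AssFix}(iii)) into a Hennion--Keller--Liverani type two--norm iteration along a $\bbP$--typical $\sig$--orbit, using the uniform Lasota--Yorke estimates of Proposition \ref{Prop LY} and the ergodic theorem. For the reduction, iterating \eqref{L Du} exactly as in \eqref{CharFunc} and \eqref{Fib CLT rel 1} gives, for every $n\ge1$ and every real $t$,
\[
\ka_\om(e^{itS_n^\om\varphi})=c_\om\,\tilde m_{\sig^n\om}\big(\cL_\om^{it,n}g_\om\big),
\]
where $(c_\om,g_\om)=(1,\tilde h_\om)$ when $\ka_\om=\mu_\om$, $(c_\om,g_\om)=(1,\textbf{1})$ when $\ka_\om=\tilde m_\om$, and $(c_\om,g_\om)=(a_\om^{-1},1/v)$ when $\ka_\om=\bar m_\om$; in each case $\text{ess-sup}_\om\|g_\om\|_{Li}<\infty$ and $\text{ess-sup}_\om|c_\om|<\infty$. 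Since $|\tilde m_{\sig^n\om}(\cL_\om^{it,n}g_\om)|\le c_0\|\cL_\om^{it,n}g_\om\|_{Li}$ with $c_0=\text{ess-sup}_\om\tilde m_\om(\Del_\om)<\infty$, it is enough to prove that for $\bbP$--a.e.\ $\om$ there are $C_\om>0$ and a \emph{deterministic} $\del\in(0,1)$ so that $\sup_{t\in J}\|\cL_\om^{it,n}\|_{\cH_\om\to\cH_{\sig^n\om}}\le C_\om\del^n$ for all $n\ge1$, for each fixed compact $J$ with $0\notin J$; intersecting the resulting full--measure sets over a countable exhaustion of $\bbR\setminus\{0\}$ (resp.\ $[-\pi/h,\pi/h]\setminus\{0\}$) then yields the proposition. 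On $J\cap\{|t|<r\}$ this bound is already contained in Theorem \ref{RPF} via the estimate $|e^{\Pi_{\om,n}(it)}|\le c_1e^{-c_2nt^2}$, so we may assume $J\subset\{|t|\ge r\}$, i.e.\ $J$ is compact and bounded away from $0$.

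Next I would assemble two ingredients. Specializing Proposition \ref{Prop LY} to $v_\ell=e^{\ve_0\ell}$ gives a \emph{uniform} Lasota--Yorke inequality $\|\cL_\om^{it,n}g\|_{Li}\le A_1\al^n\|g\|_{Li}+A_2\|g\|_{L^1(\tilde m_\om)}$ with $\al\in(0,1)$ and $A_1,A_2$ independent of $\om,n$ and $t\in J$, as well as the uniform bound $\sup_{\om,n,t\in J}\|\cL_\om^{it,n}\|_{Li}=:C_*<\infty$; moreover $|\cL_\om^{it}g|\le\cL_\om^{0}|g|$ together with \eqref{L Du} (test function $\textbf{1}$) gives the weak--norm estimate $\|\cL_\om^{it}g\|_{L^1(\tilde m_{\sig\om})}\le\|g\|_{L^1(\tilde m_\om)}$, so the $L^1$--norm never increases along the cocycle. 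On the deterministic side, $t\mapsto r(\cL_a^{it})$ is upper semicontinuous and, by Assumption \ref{AssFix}(iii), $<1$ on the compact set $J$, so a finite subcover argument together with continuity of $t\mapsto\cL_a^{it}$ on bounded sets shows that for any prescribed $\ve_0>0$ there is $n_0$ with $\|\cL_a^{it,n_0}g\|_{L^1(\tilde m_a)}\le c_0\|\cL_a^{it,n_0}g\|_{Li}\le\ve_0\|g\|_{Li}$ for all $t\in J$. The crucial step is to upgrade this to a statement valid on a \emph{fixed} cylinder around $a$: using the continuity in $\om$ at $a$ of the base operators $\cP_{\om,m}$ from Assumption \ref{AssFix}(ii) (out of which, for the finitely many return times $m\le n_0+N_{\rm tr}$, the operator $\cL_\om^{it,n_0}$ truncated at tower level $N_{\rm tr}$ is built), together with a truncation of $\Del_\om$ at level $N_{\rm tr}$ whose discarded mass is $\le c_1e^{-c_2N_{\rm tr}}$ \emph{uniformly in $\om$} by \eqref{Exp}, one obtains $N_{\rm tr}$ and a neighbourhood $\mathcal V$ of $a=\om_0^{\bbZ}$ of the form $\{\om:\om_j\in U_0,\ |j|\le L_0\}$ ($U_0$ a neighbourhood of $\om_0$, so $\bbP(\mathcal V)=P_0(U_0)^{2L_0+1}=:p_0>0$ by Assumption \ref{AssFix}(i)) such that
\[
\|\cL_{\om'}^{it,n_0}g\|_{L^1(\tilde m_{\sig^{n_0}\om'})}\le 2\ve_0\|g\|_{Li}\qquad\text{for all }\om'\in\mathcal V,\ t\in J,\ g\in\cH_{\om'} .
\]
It is essential that $\ve_0$ (hence $n_0,N_{\rm tr},L_0,p_0$) is fixed once and for all, small enough for the iteration below; in particular $p_0$ does not degenerate.

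Finally I would run the two--norm iteration. Since $\bbP=P_0^{\bbZ}$ the map $\sig^{n_0}$ is ergodic, so for $\bbP$--a.e.\ $\om$ the blocks $[jn_0,(j+1)n_0)$ with $\sig^{jn_0}\om\in\mathcal V$ (``good'' blocks) have positive density; taking $n_0>2L_0$ and using the product structure one gets positive density of \emph{productive episodes}, namely a good block followed by $T_0$ non--good blocks at a disjoint window, where $T_0$ is fixed so that $A_1\al^{T_0n_0}\le\ve_0$. Put $g_m=\cL_\om^{it,m}g$ and $\Psi_m=\|g_m\|_{L^1(\tilde m_{\sig^m\om})}+\zeta\|g_m\|_{Li}$ for a small weight $\zeta>0$. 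By the weak--norm estimate and the Lasota--Yorke inequality, $\Psi_m$ grows by at most the factor $1+\zeta A_2$ across any single block; across a productive episode the good--block bound $\|g_{(j+1)n_0}\|_{L^1}\le2\ve_0\|g_{jn_0}\|_{Li}$ followed by the $T_0$--step Lasota--Yorke relaxation of the $Li$--norm (down to $O(\ve_0)$ times the current $L^1$--norm) forces $\Psi$ to be multiplied by $\le\tfrac12$, once $\ve_0$ and then $\zeta$ are chosen small. Since productive episodes occur with a fixed positive density $\bbP$--a.s., the product of these per--block factors along the orbit is at most $C_\om\del^n$ with a deterministic $\del\in(0,1)$ and a measurable $C_\om$, whence $\|\cL_\om^{it,n}g\|_{Li}\le\zeta^{-1}C_\om\del^n\|g\|_{Li}$ uniformly in $t\in J$, which is the operator--norm decay required above.

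I expect the main obstacle to be the construction of the fixed good cylinder $\mathcal V$. The tower $\Del_\om$ and the map $F_\om$ depend on the coordinates $\om_j$ for \emph{all} $j\le0$ (through the levels $\Del_{\om,\ell}$ and the returns of $\sig^{-\ell}\om$), so $\cL_\om^{it,n_0}$ is not literally a function of finitely many coordinates; the exponential tails \eqref{Exp} must be used to show that truncating $\Del_\om$ at level $N_{\rm tr}$ perturbs $\cL_\om^{it,n_0}$ by at most $\ve_0$ in the relevant $\cH\to L^1$ operator norm, uniformly in $\om$ and $t\in J$, after which Assumption \ref{AssFix}(ii), applied to the finitely many return times surviving the truncation, makes a fixed finite cylinder work. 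Carrying out this truncation estimate together with the uniformity over the compact parameter set $J$ is where essentially all the analytic work lies; the remaining steps follow the scheme of \cite[Ch.~5 \& 7]{HK} (and of \cite{DFGTV3} in the hyperbolic case).
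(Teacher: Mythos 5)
Your plan is correct and rests on the same three pillars as the paper's proof: (a) reduce the characteristic-function bound to exponential decay of $\sup_{t\in J}\|\cL_\om^{it,n}\|$ using duality and the uniform boundedness of $\|\tilde h_\om\|_{Li}$, $\|1/v\|_{Li}$, $\textbf{1}$; (b) use the exponential tails to truncate the tower so that Assumption \ref{AssFix}(ii)--(iii) produces, on a fixed-measure cylinder neighbourhood $\mathcal V$ of $a$, a block of definite length with quantitatively small norm; (c) feed the positive density of visits to $\mathcal V$ (ergodic theorem) into a block decomposition of $\cL_\om^{it,n}$. The genuine divergence from the paper is in step (b)--(c): the paper approximates $\cL_\om^{it,s}$ by $\cR_{it}^{\,s}$ directly in \emph{operator norm on $\cH$} (using \eqref{Approx M}, which is an $\cH\to\cH$ bound, plus compactness of $J$), so that for $\om\in V=\bigcap_{j<s}\sig^{-j}U$ one has $\|\cL_\om^{it,s}\|\le 3/(4B_J)$ outright, and then the uniform bound $\|\cL_\om^{it,n}\|\le B_J$ from Proposition \ref{Prop LY} makes the block argument purely multiplicative — no two-norm bookkeeping is needed. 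You instead settle for an $\cH\to L^1$ perturbation bound on the good cylinder and then run a Hennion--Keller--Liverani style two-norm iteration with a composite Lyapunov quantity $\Psi_m$ and ``productive episodes'' of length $(T_0+1)n_0$, using the Lasota--Yorke inequality to recover contraction of the $Li$-norm after the $L^1$-norm has been killed. This is more machinery than the situation requires: the truncation estimate \eqref{Approx M} already controls the full $\cH$-norm (the discarded high floors contribute exponentially little to both the $\sup$- and Lipschitz-parts because of the weight $v_\ell=e^{\ve_0\ell}$ with $\ve_0<c_2$), so the detour through the weak norm, the auxiliary weight $\zeta$, and the extra $T_0$ non-good blocks can all be dispensed with. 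Your route is still valid and is indeed more robust (it would survive if one only had continuity in the weak operator norm), but the paper's single-norm block argument is shorter and uses nothing beyond the uniform operator-norm bound and one contractive $s$-block per visit to $V$. Two small cosmetic points: the splitting of $J$ into $\{|t|<r\}$ (handled by the RPF estimate) and $\{|t|\ge r\}$ is unnecessary — the deterministic spectral-gap argument already covers every compact $J\subset\bbR\setminus\{0\}$; and the statement's ``$\tilde m_\om$'' should be read as $\bar m_\om=m_\om/m_\om(\Del_\om)$, as in Theorems \ref{LLT1}--\ref{LLT2}, which your list $(c_\om,g_\om)=(a_\om^{-1},1/v)$ correctly anticipates.
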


\begin{proof}
First, using the uniform exponential tails and (\ref{Distortion0}), we have that for any $M$ and $t\in\bbR$, uniformly in $\om$,
\begin{equation}\label{Approx M}
\left\|\cL_{\om}^{it}-\cL_{\om}^{it, \leq M}\right\|\leq (1+|t|)c_1e^{-c_2 M}
\end{equation}
where $c_1, c_2>0$ are constants and $\cL_{\om}^{it, \leq M}(g)=\cL_{\om}^{it}(g\bbI(R_{\om}\leq M))$. 

Next, let $J$ be a compact subset of either $\bbR\setminus\{0\}$ (in the aperiodic case) or $[-\pi/h,\pi/h]\setminus\{0\}$ (in the lattice case). Let $B_J\geq1$ be so that 
\[
\sup_{n\geq 1}\sup_{t\in_J}\|\cL_{\om}^{it,n}\|\leq B_J.
\]
As noted before, such a constant exists in view of the Lasota-Yorke inequality.
Let $s$ be so large so that 
\[
\sup_{t\in J}\|\cR_{it}^s\|\leq \frac1{4 B_J}.
\]
Such an $s$ exists in view of Assumption \ref{AssFix} (iii).
Let $\ve>0$. Then by (\ref{Approx M}) and the compactness of $J$ there exists $M=M_\ve$ so that 
 for any $\om$ we have
\[
\sup_{t\in J}\|\cL_{\om}^{it}-\cL_{\om}^{it,\leq M}\|<\ve.
\]
Therefore, there is a constant $A_{j,s}>0$ so that 
\[
\sup_{t\in J}\|\cL_{\om}^{it,s}-\cL_{\om}^{it,\leq M,s}\|<A_{J,s}\ve.
\]
where 
\[
\cL_{\om}^{it,\leq M,s}=\prod_{j=0}^{s-1}\cL_{\sig^j\om}^{it,\leq M}.
\]
Next, by  Assumption \ref{AssFix} (ii) there is a neighborhood $U$ of $a$ so that for any $\om\in U$ we have
\[
\sup_{t\in J}\|\cL_{\om}^{it,\leq M}-\cL_{a}^{it,\leq M}\|<\ve.
\]
Set $V=\bigcap_{j=0}^{s-1}\sig^{-j}U$. Then $V$ is an open neighborhood of $a$, and so $\bbP(V)>0$ (since $P_0$ assigns positive mass to open sets containing $\om_0$). 
It follows that  there is a constant $C_{J,s}>0$ so that for any $\om\in V$ we have
\[
\sup_{t\in J}\left\|\cL_{a}^{it,\leq M,s}-\cL_{\om}^{it,\leq M, s}\right\|\leq C_{J,s}\ve.
\]
By taking a sufficiently small $\ve$ we get that
\[
\sup_{\om\in V}\sup_{t\in J}\left\|\cL_{\om}^{it,s}-\cR_{it}^s\right\|<\frac1{2B_J}.
\]

Finally, by Birkhoff's ergodic theorem and the Kac formula, for $\bbP$-a.a. $\om$ there is an infinite sequence $n_1<n_2<...$ so that 
\[
\lim_{m\to\infty}n_m/m=1/\bbP(V)>0.
\]
Therefore, there is a constant $c>0$ so that, $\bbP$-a.s. when $n$ is large enough we can partition $\cL_{\om}^{it,n}$ into at least $cn$ blocks so that the norm of the odd blocks does not exceed $B_J$, while the norm of the even blocks does not exceed $\frac12 B_J$ (we can take $c=P(V)/2s$). Therefore, $\bbP$-a.s. for any $n$ large enough we have
\[
\sup_{t\in J}\|\cL_{\om}^{n,it}\|\leq D_J2^{-cn}
\]
and the proof of the proposition is complete.
\end{proof}

\begin{remark}
When (\ref{A3}) and (\ref{A3'}) hold true then we can also get first order Edgeworth expansions in a similar way to  \cite{DH1} and \cite{HafEdge}.
\end{remark}
\subsection{Large and moderate deviations principles: proofs}
Relying on the G\"artner-Ellis Theorem and on (\ref{Exponential convergence}), (\ref{I.1}) and that 
\[
\left|\mu_\om(S_n^\om \varphi)-\bar m_\om(S_n^\om \varphi)\right|\leq C,
\]
the proof of Theorems \ref{MD} and \ref{LD} proceed  exactly as in \cite{HafSDS} (in our case the variance grows linearly fast).
The main idea in the proof is that, using (\ref{Exponential convergence}) when $z\in\{\zeta\in\bbC: |\zeta|\leq \del\}$ (where $\del$ is small enough) we get that  for both choices $\ka_\om=\mu_\om$ and $\ka_\om=\bar m_\om$ we have
\[
\ln \ka_\om(e^{z(S_n^\om \varphi-\mu_\om(S_n^\om \varphi))})=
\sum_{k=0}^{n-1}\la_{\sig^k\om}(z)+O(1).
\]
Diving by $n$ and taking the limit as $n\to\infty$ yields Theorem \ref{LD}. In Theorem \ref{MD} we have a speed function which is of sublinear order in $n$. In this case. using second order Taylor expansions of the function $z\to\la_\om(z)$ (using \eqref{C1}) and then applying the G\"artner-Ellis Theorem
yields Theorem \ref{MD} exactly as in  \cite[Theorem 2.8]{HafSDS}.

\subsection{additional limit theorems}
We can also obtain the local CLT and the large and moderate deviations principles for vector valued random observables $\varphi_\om$. The proofs are very close to the corresponding proofs in \cite{DH1}, and so they are not provided. Moreover, using the ideas in \cite{Annealed}, under appropriate conditions we can also get a local CLT, a Berry-Esseen theorem and a Renewal theorem for the sums $S_n\varphi =\sum_{j=0}^{n-1}\varphi\circ T^j$, where $\varphi(\om,x)=\varphi_\om(x)$, $T(\om,x)=(\sig\om, F_\om x)$ is the skew product and $(\om,x)$ is distributed according to $\mu=\int \mu_\om dP(\om)$. In the applications in Section \ref{SecApp}, all of the above results translate into corresponding results with $f_\om$ instead of $F_\om$ and with the equivariant measures $\mu_\om$ discussed there.




\begin{acknowledgment}
I would like to thank D.  Dragi\v cevi\' c  for reading carefully a preliminary version of the paper and for some useful comments. 
\end{acknowledgment}

\end{document}